\documentclass{article}
\usepackage[utf8]{inputenc}
\usepackage[T1]{fontenc}
\usepackage{amsmath}
\usepackage{amsfonts}
\usepackage{hyperref}
\hypersetup{
    colorlinks=true,
    linkcolor=cyan,
    filecolor=magenta,      
    urlcolor=cyan,
    pdfpagemode=FullScreen,
    }
\usepackage{amssymb}
\usepackage{mathtools}
\usepackage[version=4]{mhchem}
\usepackage{tabularx}
\usepackage[a4paper, total={6.5in, 9in}]{geometry}
\usepackage[most]{tcolorbox}
\usepackage{xcolor}
\usepackage[numbers]{natbib}
\usepackage[noamsbb]{stmaryrd}
\usepackage{bbold}
\usepackage{multirow} 
\usepackage{enumitem}
\usepackage{xcolor}
\usepackage{hyperref}
\usepackage{tikz-cd}
\usepackage{epstopdf}
\usepackage{bm}
\usepackage{pgfplots}
\usepackage{subcaption}
\usepackage{graphicx}
\usepackage[export]{adjustbox}
\usepackage{amsthm}
\usepackage{chngcntr}
\usepackage{stmaryrd}
\usepackage{apptools}
\usepackage{esint}
\usepackage{lmodern}
\usepackage{tikz}
\usetikzlibrary{patterns,positioning,cd,calc}
\pgfplotsset{compat=1.18}
\usepackage{authblk}

\newtheorem{theorem}{Theorem}[section]
\newtheorem{lemma}{Lemma}[section]

\newtheorem{corollary}[theorem]{Corollary}

\newtheorem{remark}{Remark}

\numberwithin{equation}{section}

\def\eps{{\varepsilon}}

\def\div{\mathrm{div\,}}
\def\divn{\mathrm{div}}
\def\curl{\boldsymbol{\mathrm{curl}}\,}
\def\curln{\boldsymbol{\mathrm{curl}}}
\def\curlGamma{\mathrm{curl}_\Gamma\,}
\def\grad{\boldsymbol{\mathrm{grad}}\,}
\def\gradn{\boldsymbol{\mathrm{grad}}}
\def\gradGamma{\boldsymbol{\mathrm{grad}}_\Gamma\,}

\def\Hdiv{{\boldsymbol{H}(\mathrm{div})}}

\def\Hcurl{{\boldsymbol{H}(\boldsymbol{\mathrm{curl}})}}
\def\Hzcurl{\boldsymbol{H}_0(\boldsymbol{\mathrm{curl}})}

\def\ub{{\boldsymbol{u}}}
\def\vb{{\boldsymbol{v}}}
\def\mub{{\boldsymbol{\mu}}}
\def\nb{{\boldsymbol{n}}}

\def\R{{\mathbb{R}}}
\def\fb{\boldsymbol{f}}
\def\taub{\boldsymbol{\tau}}

\def\Hb{\boldsymbol{H}}
\def\Lb{\boldsymbol{L}}
\def\a{\mathsf{a}}

\def\B{\mathsf{B}}
\def\Nedelec{N\'{e}d\'{e}lec }
\def\xb{\boldsymbol{x}}

\def\Wb{\boldsymbol{W}}
\def\supp{\mathrm{supp}}

\def\wb{\boldsymbol{w}}

\def\Poincare{Poincar\'{e} }

\def\Pb{\boldsymbol{P}}
\def\Sigmab{\boldsymbol{\Sigma}}
\def\rb{\boldsymbol{r}}
\def\Vb{\boldsymbol{V}}

\def\Wb{\boldsymbol{W}}

\def\Mb{\boldsymbol{M}}
\def\gb{\boldsymbol{g}}
\def\betab{{\boldsymbol{\beta}}}

\def\L{\mathsf{L}}
\def\Pib{{\boldsymbol{\Pi}}}

\def\INDav{\boldsymbol{\mathcal{I}}^{\mathrm{ND}}_h}

\def\gammat{\gamma_t}
\def\gammax{\gamma_\times}
\def\gamman{\gamma_n}

\def\zerob{\boldsymbol{0}}

\def\Sigmabh{\boldsymbol{\Sigma}_h}

\def\varphib{\boldsymbol{\varphi}}
\def\Fcal{\mathcal{F}}
\def\Wcal{\mathcal{W}}

\def\Tcal{\mathcal{T}}

\def\Rsfb{\boldsymbol{\mathsf{R}}}
\def\etab{\boldsymbol{\eta}}
\def\Asf{\mathsf{A}}
\def\Bsf{\mathsf{B}}
\def\lsf{\mathsf{l}}
\def\Pcal{\mathcal{P}}
\def\Ib{\boldsymbol{I}}
\def\Dsf{\mathsf{D}}

\def\dx{\,\mathrm{d}x}
\def\ds{\,\mathrm{d}s}

\newcommand{\rmk}[1]{\textcolor{red}{\textbf{(#1)}}}

\newcommand{\vertiii}[1]{{\left\vert\kern-0.25ex\left\vert\kern-0.25ex\left\vert #1 
    \right\vert\kern-0.25ex\right\vert\kern-0.25ex\right\vert}}
\newcommand{\jump}[1]{\left[\hspace{-1.5pt}\left[#1\right]\hspace{-1.5pt}\right]}
\newcommand{\avg}[1]{\left\{\hspace{-3pt}\left\{#1\right\}\hspace{-3pt}\right\}}

\title{A Penalty-Free Asymmetric Nitsche's Method for Edge Elements}
\author[]{Tianwei Yu\thanks{Seminar for Applied Mathematics, ETH Z\"{u}rich, \texttt{tianwei.yu@sam.math.ethz.ch}} }
\affil[]{}

\date{}

\begin{document}
\maketitle
\begin{abstract}
    We show the stability of a penalty-free asymmetric Nitsche's method using Nédélec edge elements for solving $\curln$-$\curln$-type problems with tangential Dirichlet boundary conditions imposed weakly. The main result is an inf-sup stability estimate for the asymmetric bilinear form under an isolated patch condition on the tetrahedral mesh. Applications to a $\curln$-elliptic problem and a magnetic advection-diffusion problem are discussed.
\end{abstract}
\section{Introduction}
Let $\Omega\subset\R^3$ be a bounded Lipschitz polyhedral domain with its boundary $\Gamma := \partial\Omega$. For simplicity, we assume throughout that $\Omega$ has trivial topology in the sense that it is simply connected and its boundary is also simply connected. 
In this work, we investigate a \emph{penalty-free asymmetric Nitsche's method} for $\Hcurl$-conforming edge elements. Specifically, we consider the following bilinear form to discretize the $\curln$-$\curln$ operator:
\begin{equation}\label{eq:blasym-intro}
\begin{aligned}\a^\mathrm{asym}_{\curln,h}(\ub_h,\vb_h) &:= \int_\Omega \curl\ub_h \cdot \curl\vb_h \dx - \int_\Gamma  \gammax \curl \ub_h \cdot \gammat \vb_h \ds + \int_\Gamma \gammax \curl \vb_h \cdot \gammat \ub_h \ds
\end{aligned}
\end{equation}
where $\gammat \ub := \nb \times (\ub \times \nb),\,\gammax \ub := \ub \times \nb$ denote the tangential trace and the rotated tangential trace of $\ub$, respectively.
The bilinear form~\eqref{eq:blasym-intro} discretizes the $\curln$-$\curln$ operator and imposes tangential boundary conditions (BCs) \emph{weakly}. Relevant model problems include:

\begin{itemize}
    \item the stationary Maxwell's equation with Coulomb gauge (see, e.g.,~\cite{hiptmair_2002}):
    \begin{equation}\label{eq:cc-intro}
        \begin{aligned}
        \curl\curl\ub  &= \fb && \text{in }\Omega,\\
        \div \ub &= 0 && \text{in }\Omega,\\
        \ub\times\nb &= \zerob && \text{on } \Gamma,
        \end{aligned}
    \end{equation}
    where $\fb$ is assumed to be solenoidal;
    \item the magnetic advection-diffusion equation arising in magnetohydrodynamics (see, e.g.,~\cite{spruit_2016}):
    \begin{equation}\label{eq:mad-intro}
        \begin{aligned}
        \eps \curl\curl\ub + \curl\ub \times \betab + \grad (\ub\cdot\betab) + \alpha \ub &= \fb && \text{in }\Omega,\\
        \ub\times\nb &= \zerob && \text{on }\Gamma\setminus\Gamma^- \\
        \ub &= \zerob && \text{on }\Gamma^-,
        \end{aligned}
    \end{equation}
    where $\eps$ is the diffusion coefficient; $\betab$ is the advection field; $\alpha$ is the reaction coefficient; and $\Gamma^- := \{x\in\Gamma: \betab(x)\cdot\nb(x) < 0\}$ is the inflow boundary.
\end{itemize}
\subsection{Nitsche's method}
Recall the integration-by-parts relation
\begin{equation}
\begin{aligned}
\int_{\Omega} \curl\curl\ub \cdot \vb \dx &= \int_\Omega \curl\ub \cdot \curl\vb \dx - \int_\Gamma \gammax \curl \ub \cdot \gammat \vb \ds
\end{aligned}
\end{equation}
for $\ub, \vb$ smooth.
Let $\Sigmab_h \subset \Hcurl$ be the \Nedelec edge element space and $P_h \subset H^1$ be the Lagrange nodal element space (see definition in~\eqref{def:spaces}).
To impose the tangential Dirichlet BCs $\gammat \ub = \zerob$, standard $\Hcurl$-conforming finite element discretization seeks discrete solutions in $\Sigmab_{h,0} := \Sigmab_h \cap \Hzcurl$ and uses the bilinear form:
\begin{equation}\label{eq:blstrong}
    \a_{\curln}(\ub,\vb) := \int_\Omega\curl\ub \cdot \curl\vb \dx
\end{equation}
when discretizing $\curl \curl \ub$.
As an alternative approach, one can seek discrete solutions directly in $\Sigmab_h$ while the tangential Dirichlet BCs are enforced weakly through \emph{penalization}. Consider the following bilinear form:
\begin{multline}\label{eq:blsym}
    \a^\mathrm{sym}_{\curln,h}(\ub_h,\vb_h) := \int_\Omega \curl\ub_h \cdot \curl\vb_h \dx - \int_\Gamma \gammax \curl \ub_h \cdot \gammat\vb_h \ds - \int_\Gamma \gammax\curl \vb_h \cdot \gammat \ub_h \ds \\
    + \frac{C_{p}}{h}\int_\Gamma \gammat\ub_h \cdot \gammat\vb_h \ds
\end{multline}
where $C_p > 0$ is a user-defined penalty parameter. With $C_p$ chosen sufficiently large, the bilinear form $\a^\mathrm{sym}_{\curln,h}$ can be shown coercive with respect to the $h$-dependent (semi)norm $\|\curl \vb\|_{\Lb^2(\Omega)} + h^{-1/2}\|\gammat \vb\|_{\Lb^2(\Gamma)}$~\cite[][Eq.~3.10]{wouter_2026}.

Compared to~\eqref{eq:blsym}, the target bilinear form~\eqref{eq:blasym-intro} discards the penalty term by adopting an \emph{asymmetric} formulation. As an immediate consequence, it is coercive to a weaker norm $\|\curl \vb\|_{\Lb^2(\Omega)}$, which is not sufficient to guarantee the stability of the method and makes the analysis more challenging.

\subsection{Literature review}
Approaches involving bilinear forms of type~\eqref{eq:blsym} (see also~\eqref{eq:blsymscalar}) are called \emph{Nitsche's method}. The original idea traces back to~\cite{nitsche_1971} for enforcing Dirichlet BCs weakly in the context of scalar elliptic problems (see Section~\ref{sec:scalar} for a brief recap). This approach has been extensively studied and plays a fundamental role in cutFEM/unfitted FEM for interface problems and complex geometries~\cite{hansbo_2002,heinrich_2003,annavarapu_2012,hansbo_2014,buffa_2020,burman_2025} and discontinuous Galerkin (DG) methods~\cite{arnold_2002}. Applications have been extensively investigated for elastic contact problems~\cite{chouly_2015,gustafsson_2020} and fluid-structure interaction~\cite{burman_2014,massing_2015,alauzet_2016}. Furthermore, weak imposition of Dirichlet BCs has shown advantages in treating singularly perturbed problems displaying boundary layers, such as advection-diffusion problems~\cite{schieweck_2008,burman_2006b}, drift-diffusion problems~\cite{yu_2025}, Brinkman-Darcy-Stokes flows~\cite{hansbo_2009,juntunen_2010}, and incompressible Navier-Stokes equations~\cite{bazilevs_2005,bazilevs_2007}. 

Extension of $H^1$-conforming nodal-element-based Nitsche's method to $\Hcurl$-/$\Hdiv$-conforming elements has been a topic of extensive interest. See~\cite{casagrande_2016,roppert_2020,liu_2020} for $\Hcurl$-/$\Hdiv$-interface problems and~\cite{wang_2007,konno_2011,wouter_2026} for $\Hcurl$-/$\Hdiv$-based Stokes flows.

The standard Nitsche's method involves a \emph{symmetric} bilinear form augmented with a boundary $L^2$-penalty term; see~\eqref{eq:blsym} and~\eqref{eq:blsymscalar}. The coefficient of such penalty term must be chosen sufficiently large to guarantee the stability of the method. A variant of Nitsche's method uses an \emph{asymmetric} bilinear form and dispenses with the penalty term (\emph{penalty-free}); see~\eqref{eq:blasym} and~\eqref{eq:blasymscalar}. This approach was first proposed and analyzed in the context of DG methods for advection-diffusion problems~\cite{baumann_1999} (often referred to as the Bauman-Oden method). Its stability on the $H^1$-conforming element was established in~\cite{burman_2012} under suitable mesh conditions. The theory was later extended to linear elasticity problems~\cite{boiveau_2016} and Brinkman-Darcy-Stokes flows~\cite{blank_2018}. Numerical evidence suggests certain benefits of this penalty-free asymmetric variant~\cite{chouly_2015,blank_2018}.

Overall, Nitsche's method provides a consistent, stable, and flexible way of enforcing boundary conditions/interface conditions that stem from certain diffusion terms. An abstract framework (of the symmetric variant) is provided in~\cite{benzaken_2022} for general variational problems.  

\subsection{Outline}

The main result of this work is an inf-sup stability estimate of the bilinear form~\eqref{eq:blasym} (see Lemma~\ref{thm:infsupasym}). To the best of the author's knowledge, such a result does not exist in the literature. It can be viewed as a nontrivial extension of the $H^1$-based result in~\cite{burman_2012} to the $\Hcurl$-case.

The rest of the paper is organized as follows. Basic setup and preliminaries are introduced in Section~\ref{sec:prelim}. In the beginning of Section~\ref{sec:infsup}, a quick review of the Nitsche's method for scalar elliptic boundary value problems is provided. Then, we establish the inf-sup stability estimate for the bilinear form $\a^\mathrm{asym}_{\curln,h}$.  In Sections~\ref{sec:curlcurl} and~\ref{sec:mad}, we discuss the application of the method to the $\Hcurl$-elliptic problem~\eqref{eq:cc-intro} and the magnetic advection-diffusion problem~\eqref{eq:mad-intro} with a priori error estimates. Numerical experiments are presented in Section~\ref{sec:num} followed by concluding remarks in Section~\ref{sec:conclusion}.

\section{Preliminaries}\label{sec:prelim}
\subsection{Notations}
    Throughout the presentation, we adhere to the convention that variables in plain style are scalar fields, while those in bold style are vector fields, that is, $\ub = \left[u_1, u_2, u_3\right]^\top$. The same rule applies to spaces (e.g., $\Lb^2$ stands for the vector $L^2$ space) and operators. We recall the relevant differential operators:
\begin{equation*}
    \grad: u\mapsto\begin{bmatrix}\partial_x u \\ \partial_y u \\ \partial_z u\end{bmatrix},\quad\curl: \ub\mapsto\begin{bmatrix}\partial_y u_3 - \partial_z u_2\\ \partial_z u_1 - \partial_x u_3 \\ \partial_x u_2 - \partial_y u_1 \end{bmatrix},\quad \div: \ub\mapsto \partial_x u_1 + \partial_y u_2 + \partial_z u_3.
\end{equation*}
The following notations are used for the trace operators on the boundary $\Gamma$:
\begin{equation}
    \gamma u := u, \quad \gammat \ub := \nb \times (\ub \times \nb), \quad \gammax \ub := \ub \times \nb, \quad \gamman \ub := \ub \cdot \nb
\end{equation}
representing the scalar trace, the tangential trace, the rotated tangential trace, and the normal trace, respectively.

For a bounded Lipschitz domain $D$, the standard Sobolev spaces are denoted by $H^1(D),\Hb(\boldsymbol{\mathrm{curl}},D)$ and $\Hb(\mathrm{div},D)$, which consist of functions with square-integrable derivatives, curls, and divergences, respectively. The space of functions with square-integrable derivatives up to order $r$ is denoted by $H^r(D), r\geq 0$. The convention $H^0(D) = L^2(D)$ is adopted. Sobolev norms and seminorms are denoted by $\|\cdot\|_{H^r(D)}$ and $|\cdot|_{H^r(D)}$, respectively. The spaces with vanishing traces are denoted by $H^1_0(D) := \{u \in H^1(D): u = 0 \text{ on }\partial D\}, \Hb_0(\boldsymbol{\mathrm{curl}},D) := \{\ub \in \Hb(\boldsymbol{\mathrm{curl}},D): \gammat \ub = 0 \text{ on }\partial D\}, \Hb_0(\mathrm{div},D):=\{\ub \in  \Hb(\mathrm{div},D):  \gamman \ub = 0\text{ on }\partial D\}$. To simplify the notation, the spatial domain may be omitted when $D = \Omega$, and we write $\|\cdot\|:= \|\cdot\|_{\Lb^2(\Omega)}$.

The curly bracket $(\cdot,\cdot)_D$ stands for $\Lb^2(D)$ inner product on a three-dimensional domain $D$, and the subscript may be omitted when $D = \Omega$. The angle bracket $\langle\cdot,\cdot\rangle_S$ stands for $\Lb^2(S)$ inner product on a two-dimensional surface $S$, and the subscript may be omitted when $S = \Gamma$. 

We use the symbol $C$ to represent a generic positive constant that may change from line to line. Importantly, it may depend on the spatial domain, the mesh shape regularity, and the polynomial degree but not on the mesh size.

\subsection{Finite element spaces}
Let $\{\mathcal{T}_h\}$ be a \emph{shape-regular} family of \emph{tetrahedral} triangulations of the polygonal domain $\Omega \subset \R^3$. Each tetrahedron/element $T \in \mathcal{T}_h$ is a closed domain with diameter $h_T$. The set of all faces of $\mathcal{T}_h$ is denoted by $\mathcal{F}_h$ while the set of (four) faces of $T$ is denoted by $\mathcal{F}_T$. We also adopt the following notations:
\begin{equation}\label{def:meshnotations}
    \begin{aligned}
        \Fcal_h^\Gamma &:= \{F \in \Fcal_h: F \subset \Gamma\} &&\text{ for faces on $\Gamma$},\\
        %\Fcal_h^- &:= \{F \in \Fcal_h: F \subset \Gamma^-\},\\
        \Fcal_h^\circ &:= \Fcal_h \setminus \Fcal_h^\Gamma &&\text{ for interior faces},\\
        \Tcal_h^\Gamma &:= \{T \in \Tcal_h: \Fcal_T \cap \Fcal_h^\Gamma \neq \emptyset\} &&\text{ for elements with at least one face on $\Gamma$}.
    \end{aligned}
\end{equation}
For the sake of presentation, we assume that the triangulation is \emph{quasi-uniform} throughout and denote $h := \max_{T \in \mathcal{T}_h} h_T$.

Let $k \in \{0, 1, 2, \ldots\}$ be a (fixed) polynomial degree. We write $\mathcal{P}_k$ for the space of $3$-variate polynomials of total degree $\leq k$ and $\boldsymbol{\mathcal{P}}_k = (\mathcal{P}_k)^3$. The following canonical finite element spaces in \emph{finite element exterior calculus} (FEEC)~\cite{arnold_2018} are considered:
\begin{equation}\label{def:spaces}
    \begin{aligned}
        P_h = P_h^k &:= \{v \in H^1:v|_T \in \mathcal{P}_{k+1}\;\;\forall\,T\in\mathcal{T}_h\}, \\
        \Sigmabh = \Sigmab^k_h &:= \{\vb \in \Hcurl: \vb|_T \in \boldsymbol{\mathcal{P}}_{k} + \xb\times\boldsymbol{\mathcal{P}}_{k}\;\;\forall\,T\in\mathcal{T}_h\},\\
        \Vb_{h} = \Vb_{h}^k &:= \{\vb \in \Hdiv: \vb|_T \in \boldsymbol{\mathcal{P}}_{k} + \xb\mathcal{P}_{k}\;\;\forall\,T\in\mathcal{T}_h\},  \\
        S_h = S_h^k &:= \{v \in L^2: v|_T \in \mathcal{P}_{k}\;\;\forall\,T\in\mathcal{T}_h\},
    \end{aligned}
\end{equation}
representing the Lagrange element, the \Nedelec element \emph{of the first kind}, the Raviart-Thomas element, and the discontinuous element of degree $k$. In the notation of FEEC~\cite{arnold_2018}, these spaces correspond to the trimmed polynomial spaces of differential forms $\mathcal{P}^{-}_{k+1}\Lambda^l(\mathcal{T}_h),\,l= 0,1,2,3$, respectively. 

With $\omega \subset \Omega$ a closed subdomain that is a union of elements, we denote 
\begin{equation}
    \Sigmabh(\omega) := \{\vb \in \Hb(\curln,\omega): \vb|_T \in \boldsymbol{\mathcal{P}}_{k} + \xb\times\boldsymbol{\mathcal{P}}_{k}\;\;\forall\,T\subset\omega\}
\end{equation}
as the local \Nedelec element space on $\omega$. Denoting the $\Gamma$-boundary of $\omega$ as
\begin{equation}
    \Gamma_\omega := \partial \omega \cap \Gamma,
\end{equation}
we write 
\begin{equation}
    \Sigmab_{h,0}(\omega) := \{\vb \in \Sigmabh(\omega): \gammat \vb = \zerob \text{ on } \Gamma_\omega\}
\end{equation} 
as the set of functions in $\Sigmabh(\omega)$ with vanishing (tangential) trace on $\Gamma_\omega$.
The same notation with proper traces applies to $P_h(\omega), \Vb_h(\omega)$ and $P_{h,0}(\omega), \Vb_{h,0}(\omega)$. If $\omega$ is of trivial topology, we have the local exact sequence~\cite[][Chapter~7]{arnold_2018}:
\begin{equation}\label{eq:exseqbulk}
    \begin{tikzcd}
        P_{h,0}(\omega) \arrow[r, "\grad"] & \Sigmab_{h,0}(\omega) \arrow[r, "\curl"] & \Vb_{h,0}(\omega) \arrow[r, "\div"] & S_{h}(\omega).
    \end{tikzcd}
\end{equation}
That is, $\grad P_{h,0}(\omega) \subset \Sigmab_{h,0}(\omega)$ coincides with the kernel of $\curl: \Sigmab_{h,0}(\omega) \rightarrow \Vb_{h,0}(\omega)$ and $\curl \Sigmab_{h,0}(\omega) \subset \Vb_{h,0}(\omega)$ coincides with the kernel of $\div: \Vb_{h,0}(\omega) \rightarrow S_h(\omega)$.
 
Additionally, we shall need the discrete trace spaces defined on the surface $\Gamma_\omega$. Denote
\begin{equation}
    P_h(\Gamma_\omega) := \gamma P_h(\omega), \quad \Sigmab_h(\Gamma_\omega) := \gammat \Sigmab_h(\omega), \quad S_h(\Gamma_\omega) := \gamman \Vb_h(\omega).
\end{equation}
If $\Gamma_\omega$ is of trivial topology, we have the exact sequence~\cite[][Chapter~7]{arnold_2018}:
\begin{equation}\label{eq:exseqsurf}
    \begin{tikzcd}
        P_h(\Gamma_\omega) \arrow[r, "\gradGamma"] & \Sigmab_h(\Gamma_\omega) \arrow[r, "\curlGamma"] & S_h(\Gamma_\omega)
    \end{tikzcd}
\end{equation}
where $\gradGamma$ and $\curlGamma$ are the \emph{surface gradient} and \emph{surface curl}, respectively. That is, $\gradGamma P_h(\Gamma_\omega) \subset \Sigmab_h(\Gamma_\omega)$ coincides with the kernel of $\curlGamma: \Sigmab_h(\Gamma_\omega) \rightarrow S_h(\Gamma_\omega)$.

We mention some standard inverse inequalities assuming the shape-regularity of $\{\Tcal_h\}$.
\begin{lemma}[inverse inequalities\text{~\cite[][Lemma~12.1 and~12.8]{ern_2021}}]\label{thm:invT}
    Let $\boldsymbol{\Pcal}(T)$ be a finite-dimensional subspace of $\Hb^1(T)$.
    There exist constants $C_{\mathrm{inv}}, C_{\mathrm{tr}} > 0$ independent of $h$ such that 
    \begin{align}
        |\vb|_{\Hb^1(T)} &\leq C_{\mathrm{inv}} h_T^{-1} \|\vb\|_{\Lb^2(T)} , \label{eq:inv}\\
        \|\vb\|_{\Lb^2(F_T)} &\leq C_{\mathrm{tr}} h_T^{-1/2} \|\vb\|_{\Lb^2(T)}, \label{eq:invtrace}
    \end{align}
    for any $\vb \in \boldsymbol{\Pcal}(T), T \in \Tcal_h, F \in \Fcal_T$.
\end{lemma}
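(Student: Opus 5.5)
The plan is a scaling argument: map each element to a fixed reference tetrahedron and use the equivalence of norms on finite-dimensional spaces there. Fix a reference tetrahedron $\hat T$ and, for each $T\in\Tcal_h$, the affine map $F_T(\hat\xb)=B_T\hat\xb+\boldsymbol{b}_T$ with $F_T(\hat T)=T$. Shape regularity gives
\begin{equation*}
\|B_T\|\le C h_T, \qquad \|B_T^{-1}\|\le C h_T^{-1}, \qquad |\det B_T|\simeq h_T^3 .
\end{equation*}
Since $\boldsymbol{\Pcal}(T)$ is arbitrary, I interpret it, as is implicit in~\cite{ern_2021}, as the push-forward of a fixed finite-dimensional reference space $\hat{\boldsymbol{\Pcal}}\subset\Hb^1(\hat T)$. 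Two choices are possible: plain composition $\vb=\hat\vb\circ F_T^{-1}$, or a covariant or contravariant Piola map, as for $\Sigmab_h$ and $\Vb_h$. Both are handled the same way, because the Piola factor $B_T^{-\top}$ (respectively $B_T/\det B_T$) is a constant matrix. Its scaling appears identically on both sides of each inequality and cancels, up to its condition number, which is bounded by shape regularity. Without such an assumption the statement is false, since one could take $\boldsymbol{\Pcal}(T)$ to contain arbitrarily oscillatory functions; so the constants depend only on $\hat{\boldsymbol{\Pcal}}$ and the shape-regularity parameter.

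For~\eqref{eq:inv}, on $\hat T$ the seminorm $|\cdot|_{\Hb^1(\hat T)}$ is continuous on the finite-dimensional space $\hat{\boldsymbol{\Pcal}}$ equipped with the norm $\|\cdot\|_{\Lb^2(\hat T)}$. Hence
\begin{equation*}
|\hat\vb|_{\Hb^1(\hat T)}\le \hat C\,\|\hat\vb\|_{\Lb^2(\hat T)} .
\end{equation*}
Change of variables gives
\begin{equation*}
|\vb|_{\Hb^1(T)}\le C\,\|B_T^{-1}\|\,|\det B_T|^{1/2}\,|\hat\vb|_{\Hb^1(\hat T)}, \qquad \|\hat\vb\|_{\Lb^2(\hat T)}\le C\,|\det B_T|^{-1/2}\,\|\vb\|_{\Lb^2(T)} .
\end{equation*}
Chaining these yields the factor $h_T^{-1}$.

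For~\eqref{eq:invtrace}, on $\hat T$ the trace map $\hat\vb\mapsto\hat\vb|_{\hat F}$ is linear on a finite-dimensional space, so it is bounded:
\begin{equation*}
\|\hat\vb\|_{\Lb^2(\hat F)}\le \hat C\,\|\hat\vb\|_{\Lb^2(\hat T)} ,
\end{equation*}
taking the maximum over the four faces $\hat F$. A face $F$ of $T$ is mapped from $\hat F$ with surface Jacobian $\simeq h_T^2$, while the volume Jacobian is $\simeq h_T^3$. The $L^2$ norms therefore scale as $h_T$ and $h_T^{3/2}$ respectively, and their ratio gives $h_T^{-1/2}$.

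The only delicate point is making the reference space explicit so that $\hat C$ is independent of $T$. Everything else is routine bookkeeping of the Jacobians.
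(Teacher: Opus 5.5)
Your proposal is correct. It is the standard reference-element scaling argument: norm equivalence on a fixed finite-dimensional space on $\hat T$, plus bookkeeping of the Jacobians, with the constant Piola factors contributing only a condition number bounded by shape regularity. This is how the cited Ern--Guermond lemmas are proved, and the paper gives no proof of its own but defers to that reference. Your observation that $\boldsymbol{\Pcal}(T)$ must be read as the push-forward of a fixed reference space, so that $C_{\mathrm{inv}}, C_{\mathrm{tr}}$ are uniform in $T$, is exactly the hypothesis left implicit in the paper's statement.
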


\subsection{Interpolation}\label{sec:IND}
Let $\INDav: \Lb^2 \rightarrow \Sigmab_h$ be an interpolation operator satisfying the following approximation properties:
\begin{equation}\label{eq:approxIND}
    \begin{aligned}
        |\vb - \INDav \vb|_{\Hb^m} &\leq C h^{r-m} |\vb|_{\Hb^r}, \\
        |\curl (\vb - \INDav \vb)|_{\Hb^m} &\leq C h^{r-m} |\curl \vb|_{\Hb^r} \\
    \end{aligned}
\end{equation}
for $m \in [0, r],\; r\in [0, k+1]$. In addition, we require that $\INDav$ preserves homogeneous boundary conditions, that is, $\INDav \vb \in \Sigmab_{h,0}$ if $\vb \in \Hzcurl$. Interpolations satisfying these properties include the so-called \emph{$L^2$-bounded commuting projections} in the FEEC framework and can be found in, e.g.,~\cite[][Chapter~23]{ern_2021}.

Recall also the multiplicative trace inequality~\cite[][Lemma~12.15]{ern_2021}:
\begin{equation}\label{eq:multtraceineq}
    \|\vb\|^2_{\Lb^2(F_T)} \leq C\left(h_T^{-1}\|\vb\|^2_{\Lb^2(T)} + h_T|\vb|^2_{\Hb^1(T)}\right),
\end{equation}
for any $F_T \in \Fcal_T, T \in \Tcal_h$. 
Consequently, the following trace approximation estimates hold:
\begin{equation}\label{eq:traceapproxIND}
    \begin{aligned}
    \left(\sum_{F \in \Fcal_h}\|\vb - \INDav \vb\|^2_{\Lb^2(F)}\right)^{1/2} &\leq C h^{r-1/2} |\vb|_{\Hb^r} , \\
    \left(\sum_{F \in \Fcal_h}\|\curl (\vb - \INDav \vb)\|^2_{\Lb^2(F)}\right)^{1/2} &\leq C h^{r-1/2} |\curl \vb|_{\Hb^r} ,
    \end{aligned}
\end{equation}
for $r\in [1,k+1]$.

\section{Inf-sup stability of the asymmetric bilinear form}\label{sec:infsup}
\subsection{Review of the scalar case}\label{sec:scalar}
Consider the Poisson equation with homogeneous Dirichlet BC:
\begin{equation}
    \begin{aligned}
        -\div \grad u &= f && \text{in }\Omega,\\
        u &= 0 && \text{on }\Gamma.
    \end{aligned}
\end{equation}
The symmetric Nitsche's method~\cite{nitsche_1971} seeks $u_h \in P_{h}$ such that
\begin{equation}\label{eq:blsymscalar}
    \a^\mathrm{sym}_{\gradn,h}(u_h,v_h) := (\grad u_h, \grad v_h) - \langle \partial_n u_h, v_h \rangle - \langle \partial_n v_h, u_h \rangle + \frac{C_p}{h}\langle u_h, v_h \rangle = (f,v_h) \quad \forall\,v_h\in P_{h}
\end{equation}
where $\partial_n u := \grad u \cdot \nb$ is the normal derivative of $u$ and $C_p > 0$ is a penalty parameter that must be chosen sufficiently large.
Introducing the $h$-dependent norm $\|v\|_{\gradn,h}^2 := \|\grad v\|^2 + h^{-1}\|v\|^2_{\Lb^2(\Gamma)}$, it is standard to show the coercivity of $\a^\mathrm{sym}_{\gradn,h}$ with respect to $\|\cdot\|_{\gradn,h}$ when $C_p$ is sufficiently large. On the other hand, the penalty-free asymmetric Nitsche's method seeks $u_h \in P_{h}$ such that 
\begin{equation}\label{eq:blasymscalar}
    \a^\mathrm{asym}_{\gradn,h}(u_h,v_h) := (\grad u_h, \grad v_h) - \langle \partial_n u_h, v_h \rangle + \langle \partial_n v_h, u_h \rangle = (f,v_h) \quad \forall\,v_h\in P_{h}.
\end{equation}
The bilinear form $\a^\mathrm{asym}_{\gradn,h}$ is only coercive with respect to the seminorm $\|\grad u_h \|$, which does not guarantee the stability of the method. In fact, the closely related \emph{nonsymmetric interior penalty discontinuous Galerkin} (NIPG) method is known to be unstable using piecewise linear elements~\cite{arnold_2002}. The $H^1$-conforming case is different. It can be shown that $\a^\mathrm{asym}_{\gradn,h}$ satisfies the following inf-sup stability estimate:
\begin{equation}\label{eq:infsupasymscalar}
    \sup_{v_h \in P_h} \frac{\a^\mathrm{asym}_{\gradn,h}(u_h,v_h)}{\|v_h\|_{\gradn,h}} \geq \beta \|u_h\|_{\gradn,h} \quad \forall\,u_h\in P_h
\end{equation}
under a mild mesh condition~\cite{burman_2012}. The key step is to construct a test function $v_h$ to control certain averages of the solution on the boundary via the boundary term $\langle \partial_n v_h, u_h \rangle$. Our result can be viewed as the $H(\curl)$-counterpart of~\eqref{eq:infsupasymscalar}. Yet the construction of the test function is more involved due to the vector-valued nature of the problem and the large kernel of the $\curln$ operator.

\subsection{Isolated patch condition}\label{sec:patch}
As in the scalar case~\cite[][Section~4]{burman_2012}, certain topological condition on the family of meshes $\{\Tcal_h\}$ is required. We suppose that there exists a \emph{patch decomposition} $\Wcal_h = \{\omega\}$ of $\Tcal_h^\Gamma$ (see definition in~\eqref{def:meshnotations}) such that $\bigcup_{\omega \in \Wcal_h} \omega = \Tcal_h^\Gamma$ and the following conditions are fulfilled:
\begin{enumerate}[label=(H\arabic*)]
    \item each patch $\omega \in \Wcal_h$ contains a \emph{bounded} number of elements and is \emph{simply-connected};\label{ass:bddnum}
    \item $\Gamma_\omega (:= \partial\omega \cap \Gamma)$ is \emph{co-planar} for each $\omega \in \Wcal_h$ and is \emph{simply-connected};\label{ass:coplanar}
    \item each patch $\omega \in \Wcal_h$ does not share any \emph{interior faces and edges} with another patch $\omega'$, that is, $(\partial\omega \cap \partial\omega')\setminus \Gamma = \emptyset$ for $\omega \neq \omega'$. \label{ass:isolated}
\end{enumerate}
We call the collection of~\ref{ass:bddnum}-\ref{ass:isolated} the \emph{isolated patch condition}. A sketch is shown in Figure~\ref{fig:patch}. It is worth noting that restrictiveness of the isolated patch condition comes mostly from~\ref{ass:isolated}. For instance, it rules out meshes with tetrahedra that have two or more faces on $\Gamma$. Nevertheless, meshes fulfilling the isolated patch condition can be generated via suitable refinement of tetrahedra with multiple faces on $\Gamma$. Furthermore, numerical experiments (see Section~\ref{sec:num}) indicate a fairly high tolerance to violation of these conditions. In particular, we observe that the method performs well even when the mesh contains tetrahedra with two faces on $\Gamma$. Therefore, we anticipate that the isolated patch condition can be weakened to obtain the same stability result (Lemma~\ref{thm:infsupasym}).
    \begin{figure}[h]
        \centering
\begin{tikzpicture}[scale=0.7,line join=round,line cap=round]

\tikzset{
  surf/.style   ={fill=gray!95, fill opacity=0.34, draw=gray, draw opacity=0.95, thin},
  patchA/.style ={fill=blue!75, fill opacity=0.30, draw=blue!70, very thick},
  patchB/.style ={fill=red!75,  fill opacity=0.30, draw=red!70,  very thick},
  tetA/.style   ={fill=blue!75, fill opacity=0.20, draw=blue!80!black, thin},
  tetB/.style   ={fill=red!75,  fill opacity=0.18, draw=red!80!black,  thin},
  ed/.style     ={draw=gray!70, thin},
  hidden/.style ={draw=gray!65, dashed, thin},
  every node/.style={font=\small}
}

% -------------------------------------------------
% planar vertices of the boundary triangulation
% slightly perturbed to look more natural
% -------------------------------------------------
\coordinate (p2) at (3.12,2.61);
\coordinate (p3) at (4.18,3.34);
\coordinate (p4) at (5.58,2.01);
\coordinate (p5) at (6.96,3.63);
\coordinate (p6) at (8.47,2.01);
\coordinate (p7) at (9.46,3.11);
\coordinate (p8) at (10.28,2.24);

% one-ring outer vertices, also perturbed
\coordinate (q0) at (1.58,3.4);
\coordinate (q1) at (4.96,4.42);
\coordinate (q2) at (8.08,4.10);
\coordinate (q3) at (10.48,3.76);

\coordinate (r0) at (1.92,1.58);
\coordinate (r1) at (5.02,0.59);
\coordinate (r2) at (11.18,1.21);
\coordinate (r3) at (11.68,3.04);

% interior vertices of tetrahedra
\coordinate (vL) at (5.50,0.02);
\coordinate (u)  at (7.98,-0.02);

% -------------------------------------------------
% 1) two patches first
% -------------------------------------------------
% blue patch
\filldraw[patchA] (p2)--(p3)--(p4)--cycle; % F1
\filldraw[patchA] (p3)--(p4)--(p5)--cycle; % F2

% red patch
\filldraw[patchB] (p4)--(p5)--(p6)--cycle; % F3
\filldraw[patchB] (p5)--(p6)--(p7)--cycle; % F4
\filldraw[patchB] (p6)--(p7)--(p8)--cycle; % F5

% -------------------------------------------------
% 2) planar boundary surface mesh around them
% -------------------------------------------------
% upper ring
\filldraw[surf] (q0)--(p2)--(p3)--cycle;
\filldraw[surf] (q0)--(p3)--(q1)--cycle;
\filldraw[surf] (q1)--(p3)--(p5)--cycle;
\filldraw[surf] (q1)--(p5)--(q2)--cycle;
\filldraw[surf] (q2)--(p5)--(p7)--cycle;
\filldraw[surf] (q2)--(p7)--(q3)--cycle;
\filldraw[surf] (q3)--(p7)--(p8)--cycle;

% lower ring
\filldraw[surf] (r0)--(p2)--(p4)--cycle;
\filldraw[surf] (r0)--(p4)--(r1)--cycle;
\filldraw[surf] (r1)--(p4)--(p6)--cycle;
\filldraw[surf] (r1)--(p6)--(r2)--cycle;
\filldraw[surf] (r2)--(p6)--(p8)--cycle;
\filldraw[surf] (r2)--(p8)--(r3)--cycle;

% left/right closures
\filldraw[surf] (q0)--(r0)--(p2)--cycle;
\filldraw[surf] (q3)--(p8)--(r3)--cycle;

% -------------------------------------------------
% 3) tetrahedra under the two patches
% -------------------------------------------------
% blue patch tetrahedra
% T1=(p2,p3,p4,vL), T2=(p3,p4,p5,vL)
\filldraw[tetA] (p2)--(p3)--(vL)--cycle;
\filldraw[tetA] (p2)--(p4)--(vL)--cycle;
\filldraw[tetA] (p3)--(p4)--(vL)--cycle;

\filldraw[tetA] (p3)--(p5)--(vL)--cycle;
\filldraw[tetA] (p4)--(p5)--(vL)--cycle;
\filldraw[tetA] (p3)--(p4)--(vL)--cycle;

% red patch tetrahedra
% T3=(p4,p5,p6,u), T4=(p5,p6,p7,u), T5=(p6,p7,p8,u)
\filldraw[tetB] (p4)--(p5)--(u)--cycle;
\filldraw[tetB] (p4)--(p6)--(u)--cycle;
\filldraw[tetB] (p5)--(p6)--(u)--cycle;

\filldraw[tetB] (p5)--(p6)--(u)--cycle;
\filldraw[tetB] (p5)--(p7)--(u)--cycle;
\filldraw[tetB] (p6)--(p7)--(u)--cycle;

\filldraw[tetB] (p6)--(p7)--(u)--cycle;
\filldraw[tetB] (p6)--(p8)--(u)--cycle;
\filldraw[tetB] (p7)--(p8)--(u)--cycle;

% emphasize shared interior faces in red patch
\draw[red!90!black, line width=0.9pt] (p5)--(p6)--(u)--cycle;
\draw[red!90!black, line width=0.9pt] (p6)--(p7)--(u)--cycle;

% -------------------------------------------------
% mesh edges
% -------------------------------------------------
\draw[ed] (q0)--(q1)--(q2)--(q3);
\draw[ed] (p2)--(p3)--(p4)--(p5)--(p6)--(p7)--(p8);
\draw[ed] (r0)--(r1)--(r2)--(r3);

\draw[ed] (q0)--(p2) (q0)--(p3)
          (q1)--(p3) (q1)--(p5)
          (q2)--(p5) (q2)--(p7)
          (q3)--(p7) (q3)--(p8);

\draw[ed] (r0)--(p2) (r0)--(p4)
          (r1)--(p4) (r1)--(p6)
          (r2)--(p6) (r2)--(p8)
          (r3)--(p8);

\draw[ed] (q0)--(r0) (q3)--(r3);

% patch outlines
\draw[blue!85] (p2)--(p3)--(p4)--cycle;
\draw[blue!85] (p3)--(p4)--(p5)--cycle;

\draw[red!85]  (p4)--(p5)--(p6)--cycle;
\draw[red!85]  (p5)--(p6)--(p7)--cycle;
\draw[red!85]  (p6)--(p7)--(p8)--cycle;

% shared boundary edge between the two patches
\draw[line width=1.0pt] (p4)--(p5);

% interior edges
\draw[blue!80!black] (p2)--(vL) (p3)--(vL) (p4)--(vL) (p5)--(vL);
\draw[red!80!black]  (p4)--(u) (p5)--(u) (p6)--(u) (p7)--(u) (p8)--(u);

% dashed hints into Omega
\draw[hidden] (vL)--++(-0.9,-1.15);
\draw[hidden] (vL)--++( 0.2,-1.25);
\draw[hidden] (u)--++(-0.3,-1.20);
\draw[hidden] (u)--++( 0.7,-1.05);

% labels
\node at (3.15,3.82) {\huge\(\Gamma\)};
\node at (6.5,-0.55) {\huge\(\Omega\)};

\node[blue!80!black] at (5.00,2.78) {\Large\(\Gamma_{\omega}\)};

\node[red!80!black] at (8.52,2.74) {\Large\(\Gamma_{\omega'}\)};

\node[blue!80!black] at (5.08,1.28) {\Large\(\omega\)};

\node[red!80!black] at (7.92,1.28) {\Large\(\omega'\)};

\fill (vL) circle (1.2pt);
\fill (u)  circle (1.2pt);

\end{tikzpicture}

        \caption{A sketch of the isolated patch condition. The blue and red patches $\omega$ and $\omega'$ contain two and three elements, respectively. }
        \label{fig:patch}
\end{figure}

\subsection{Inf-sup estimate}
Introduce the $h$-dependent norms
\begin{equation}\label{def:hseminorm}
\begin{aligned}
    |\vb|_{\curln,h}^2 &:= \|\curl\vb\|^2 + h^{-1}\|\gammat\vb\|_{\Lb^2(\Gamma)}^2, \\
    \|\vb\|_{\curln,h}^2 &:= \|\vb\|^2 + |\vb|_{\curln,h}^2.
\end{aligned}
\end{equation}
Recall that 
\begin{equation}\label{eq:blasym}
    \a^\mathrm{asym}_{\curln,h}(\ub_h,\vb_h) := (\curl\ub_h,\curl\vb_h)- \langle \gammax \curl \ub_h, \gammat \vb_h \rangle + \langle \gammax \curl \vb_h, \gammat \ub_h  \rangle.
\end{equation}
The main challenge in establishing the inf-sup estimate is to construct a test function capable of controlling the $\Lb^2$ norm of $\gammat \ub_h$ via the bilinear form. 

First, some notations are introduced. For each patch $\omega \in \Wcal_h$ complying with the isolated patch condition (see Section~\ref{sec:patch}), recall the exact sequences~\eqref{eq:exseqbulk} on $\omega$ and~\eqref{eq:exseqsurf} on $\Gamma_\omega$. 
Denote the kernel space of $\curlGamma : \Sigmab_h(\Gamma_\omega) \rightarrow S_h(\Gamma_\omega)$ by 
\begin{equation}\label{def:Mb}
    \Mb^\Gamma_\omega := \gradGamma P_h(\Gamma_\omega) \subset \Sigmab_h(\Gamma_\omega),
\end{equation}
and the $\Lb^2$-orthogonal projection to $\Mb^\Gamma_\omega$ by $\Pib_\omega^\Gamma: \Lb^2(\Gamma_\omega) \rightarrow \Mb^\Gamma_\omega$. Collecting the local projections and spaces, we define 
\begin{equation}\label{def:PiMb}
    \Pib^\Gamma_h := \prod_{\omega \in \Wcal_h} \Pib_\omega^\Gamma, \quad \quad  \Mb^\Gamma_h := \prod_{\omega \in \Wcal_h} \Mb^\Gamma_\omega,
\end{equation}
with the following interpretation:
\begin{equation}\label{eq:l2int}
    \|\Pib^\Gamma_h \gb^\Gamma\|_{\Lb^2(\Gamma)}^2 = \sum_{\omega\in\Wcal_h} \|\Pib_\omega^\Gamma \gb^\Gamma\|_{\Lb^2(\Gamma_\omega)}^2 \quad \text{for}\,\,\gb^\Gamma \in \Lb^2(\Gamma).
\end{equation}

Next, we provide an auxiliary lemma deduced from an elementary scaling argument.
\begin{lemma}[patch-wise scaling argument]\label{thm:scalingarg}
    There exists a constant $C > 0$ independent of $h$ such that
    \begin{equation}\label{eq:scalingarg}
        \|\grad \psi_h\|_{\Lb^2(\omega)} \leq C h^{1/2} \|\gradGamma \psi_h\|_{\Lb^2(\Gamma_\omega)}\quad\forall\,\omega \in \Wcal_h, \psi_h \in P_{h,\Gamma}(\omega)
    \end{equation}
    where
    \begin{equation}\label{def:auxspace}
        P_{h,\Gamma}(\omega) := \{v_h \in P_h(\omega): \text{\emph{all DoFs not lying on $\Gamma_\omega$ vanish and }} \langle v_h, 1\rangle_{\Gamma_\omega} = 0\}.
    \end{equation}
\end{lemma}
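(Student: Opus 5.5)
The plan is a standard scaling argument: map each patch to a reference configuration, show that on that reference configuration both sides of~\eqref{eq:scalingarg} are norms, or at least that the left-hand seminorm is controlled by the right, on a finite-dimensional space, and then track the powers of $h$.

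Fix $\omega \in \Wcal_h$. By~\ref{ass:bddnum} and shape regularity, $\omega$ has a bounded number of elements with uniformly bounded combinatorial structure. Hence, up to a rigid motion and a dilation by a factor comparable to $h$ (using quasi-uniformity), $\omega$ is the image of a reference patch $\hat\omega$ of unit size. This $\hat\omega$ belongs to a family that is compact in the space of vertex coordinates, with shape-regularity bounded away from degeneracy, and comes in finitely many combinatorial types. The dilation $x = h\hat x$ (composed with the rigid motion) maps $\Gamma_\omega$, which is co-planar by~\ref{ass:coplanar}, to a planar set $\Gamma_{\hat\omega}$. It maps $P_{h,\Gamma}(\omega)$ onto the analogous space $\hat P_\Gamma(\hat\omega)$ of piecewise $\mathcal{P}_{k+1}$ functions whose DoFs off $\Gamma_{\hat\omega}$ vanish and whose mean on $\Gamma_{\hat\omega}$ is zero; the mean-zero condition is preserved because the surface Jacobian is constant. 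The two sides transform as
\[
\|\grad \psi_h\|_{\Lb^2(\omega)} = h^{1/2}\|\hat\nabla \hat\psi\|_{\Lb^2(\hat\omega)},\qquad \|\gradGamma \psi_h\|_{\Lb^2(\Gamma_\omega)} = \|\hat\nabla_\Gamma \hat\psi\|_{\Lb^2(\Gamma_{\hat\omega})},
\]
since in 3D the gradient contributes $h^{-1}$ and the volume measure $h^{3}$, while on the 2D surface the measure contributes $h^{2}$. The factor $h^{1/2}$ in~\eqref{eq:scalingarg} therefore appears automatically, and it suffices to prove $\|\hat\nabla\hat\psi\|_{\Lb^2(\hat\omega)}\le C\|\hat\nabla_\Gamma\hat\psi\|_{\Lb^2(\Gamma_{\hat\omega})}$ on $\hat P_\Gamma(\hat\omega)$ with $C$ uniform over the reference family.

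On the reference patch, both sides are seminorms on the finite-dimensional space $\hat P_\Gamma(\hat\omega)$, so it is enough to show that the right-hand side is a norm there. Suppose $\hat\nabla_\Gamma\hat\psi = 0$ on $\Gamma_{\hat\omega}$. Since $\Gamma_{\hat\omega}$ is connected (simply connected by~\ref{ass:coplanar}), $\hat\psi$ is constant on it, and the zero-mean condition forces that constant to be $0$. Thus all DoFs on $\Gamma_{\hat\omega}$ vanish. The DoFs off $\Gamma_{\hat\omega}$ vanish by definition, so $\hat\psi = 0$. Equivalence of norms in finite dimensions then gives the reference estimate with some constant $C(\hat\omega)$.

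The main obstacle is uniformity of $C$ with respect to $h$ and $\omega$. I would handle it with a compactness argument. The best constant $C(\hat\omega)$ is the maximum of a Rayleigh-type quotient over the space. Parametrize that space by DoF values; its dimension is fixed within a combinatorial type. The quotient then depends continuously on the vertex coordinates, and these range over a compact set by shape regularity, since the volumes of reference elements are bounded below. The norm property just proved holds at every point of this set, so $C(\hat\omega)$ is continuous and finite on it and therefore bounded. Taking the maximum over the finitely many combinatorial types, which is possible because of~\ref{ass:bddnum}, gives a single constant $C$ independent of $h$.
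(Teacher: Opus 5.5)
Your proposal is correct and follows essentially the same route as the paper. It shows that $\|\gradGamma\psi_h\|_{\Lb^2(\Gamma_\omega)}$ is a norm on the finite-dimensional space $P_{h,\Gamma}(\omega)$: $\psi_h$ is constant on the connected $\Gamma_\omega$, hence zero there by the mean condition, hence zero everywhere since the off-$\Gamma_\omega$ DoFs vanish. It then concludes by norm equivalence plus the $h$-scaling. Two minor differences: you only need the right-hand side to be a norm (the paper also checks the left-hand side), and your compactness argument over reference configurations and finitely many combinatorial types spells out the uniformity in $\omega$ that the paper's appeal to shape regularity and $\mathrm{diam}(\omega)\sim h$ leaves implicit.
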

\begin{proof}
    Note that both $\omega$ and $\Gamma_\omega$ are simply-connected from Assumption~\ref{ass:bddnum} and~\ref{ass:coplanar}. We first check that both $\|\grad \psi_h\|_{\Lb^2(\omega)}$ and $\|\gradGamma \psi_h\|_{\Lb^2(\Gamma_\omega)}$ define a norm on the finite dimensional space $P_{h,\Gamma}(\omega)$. If $\|\grad \psi_h\|_{\Lb^2(\omega)} = 0$, then $\psi_h$ is constant on $\omega$, and since $\langle \psi_h, 1\rangle_{\Gamma_\omega} = 0$, we have $\psi_h = 0$. Similarly, if $\|\gradGamma \psi_h\|_{\Lb^2(\Gamma_\omega)} = 0$, then $\psi_h$ is constant on $\Gamma_\omega$, and since $\langle \psi_h, 1\rangle_{\Gamma_\omega} = 0$, we have $\psi_h = 0$ on $\Gamma_\omega$. Since the only nonzero DoFs are those on the boundary, we have $\psi_h = 0$ on $\Gamma$. Thus, both norms are indeed norms on $P_{h,\Gamma}(\omega)$. The estimate~\eqref{eq:scalingarg} then follows from the equivalence of norms on finite dimensional spaces and the scaling argument based on the assumption that $\mathrm{diam}(\omega) \sim h$ from~\ref{ass:bddnum} and $\Tcal_h$ is shape-regular.
\end{proof}
The following crucial lemma constructs the desired test function that controls $\|\Pib^\Gamma_h \gammat \ub_h\|_{\Lb^2(\Gamma)}$ via the boundary term $\langle \gammax \curl \vb_h, \gammat \ub_h  \rangle$ in~\eqref{eq:blasym}.
\begin{lemma}[a lifting operator]\label{thm:lift}
Suppose that the isolated patch condition in Section~\ref{sec:patch} holds. There exists a linear operator $\Rsfb_h: \Mb^\Gamma_h \rightarrow \Sigmab_{h,0}$ such that
\begin{equation}\label{eq:bddcontrol}
    \gammax \curl \Rsfb_h \gb^\Gamma_h = \gb^\Gamma_h \quad \forall\,\gb^\Gamma_h \in \Mb^\Gamma_h.
\end{equation}
Moreover, there exists a constant $C > 0$ independent of $h$ such that
\begin{align}
    \|\Rsfb_h \gb^\Gamma_h\|_{\Lb^2(\Omega)} &\leq Ch^{3/2}\|\gb^\Gamma_h\|_{\Lb^2(\Gamma)} \quad \forall\,\gb^\Gamma_h \in \Mb^\Gamma_h.\label{eq:bddtestfuncbound}
\end{align}
\end{lemma}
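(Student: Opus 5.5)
The construction is patch-by-patch. Because of \ref{ass:isolated}, functions in $\Sigmab_{h,0}(\omega)$ whose tangential traces vanish on $\partial\omega\setminus\Gamma_\omega$ can be extended by zero to functions in $\Sigmab_h$, and contributions from different patches do not interact. So it suffices to build, for each $\omega\in\Wcal_h$, a linear map $\Rsfb_\omega:\Mb^\Gamma_\omega\to\Sigmab_h(\omega)$ with three properties: $\gammat\Rsfb_\omega\gb=\zerob$ on all of $\partial\omega$; $\gammax\curl\Rsfb_\omega\gb=\gb$ on $\Gamma_\omega$; and $\|\Rsfb_\omega\gb\|_{\Lb^2(\omega)}\le Ch^{3/2}\|\gb\|_{\Lb^2(\Gamma_\omega)}$. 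We then set $\Rsfb_h\gb_h:=\sum_\omega\Rsfb_\omega(\gb_h|_{\Gamma_\omega})$ extended by zero. The global estimate \eqref{eq:bddtestfuncbound} follows by summing squares and using \eqref{eq:l2int}.

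For the local construction, write $\gb=\gradGamma\psi$ with $\psi\in P_h(\Gamma_\omega)$, normalized so that $\langle\psi,1\rangle_{\Gamma_\omega}=0$. Extend $\psi$ to $\omega$ by keeping its DoFs on $\Gamma_\omega$ and setting all other DoFs to zero; the extension lies in $P_{h,\Gamma}(\omega)$. Since $\gb\times\nb$ is a rotation, the condition $\gammax\curl\vb=\gb$ means $\gammat\curl\vb=\nb\times\gb$, which in turn amounts to $\gammat\curl\vb=\pm\curlGamma$-type data of $\psi$.

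It is more convenient to look for $\vb$ with $\curl\vb=\wb$, where $\wb\in\Vb_{h,0}(\omega)$ is divergence free and has prescribed tangential trace on the planar face $\Gamma_\omega$. A natural candidate is $\wb:=\curl(\psi\,\nb)$, using that $\nb$ is constant on the coplanar face (\ref{ass:coplanar}). Then $\wb=\grad\psi\times\nb$, which is divergence free, and its tangential part on $\Gamma_\omega$ is $\gradGamma\psi\times\nb$. The normal trace $\wb\cdot\nb$ vanishes on $\Gamma_\omega$. On the interior faces of $\partial\omega$, however, $\psi\nb$ has a nonzero tangential trace in general, because $\psi$ is nonzero at the vertices, edges and faces of $\Gamma_\omega$ that lie on the rim of $\partial\omega\setminus\Gamma_\omega$. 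This is the main obstacle, and the subsequent steps are designed to handle it.

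To deal with it I would look for $\vb\in\Sigmab_{h}(\omega)$ directly, via a local finite-dimensional solvability argument. Consider the linear map $\Lambda:\{\vb\in\Sigmab_h(\omega):\gammat\vb=\zerob\text{ on }\partial\omega\}\to\Mb^\Gamma_\omega$, $\vb\mapsto\gammax\curl\vb|_{\Gamma_\omega}$. First check that $\Lambda$ indeed lands in $\Mb^\Gamma_\omega$. Since $\gammat\vb=\zerob$ on $\Gamma_\omega$, we have $\curl\vb\cdot\nb=\curlGamma\gammat\vb=0$, so $\curl\vb$ is tangential on the plane; the surface divergence of its tangential part vanishes; rotating, $\gammax\curl\vb$ is surface-curl free and hence a surface gradient by \eqref{eq:exseqsurf}. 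If necessary, this step is adjusted with the Piola/trace identities.

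Next prove that $\Lambda$ is surjective. By duality it suffices to show that if $\langle\gammax\curl\vb,\gradGamma\phi\rangle_{\Gamma_\omega}=0$ for all admissible $\vb$, then $\gradGamma\phi=0$. Here I would use the extension $\phi\in P_{h,\Gamma}(\omega)$ and integration by parts on $\omega$ with test fields $\vb$ supported near $\Gamma_\omega$. For each $\phi\ne$const I would exhibit an explicit edge-bubble-type field, for instance $\vb$ built from interior-edge DoFs of the tetrahedra of $\omega$ adjacent to $\Gamma_\omega$. This is the delicate combinatorial part, and it is where simple connectivity (\ref{ass:bddnum}) and the exact sequence \eqref{eq:exseqbulk} enter: the rank count $\dim\Mb^\Gamma_\omega$ versus $\dim\curl\Sigmab_{h,0}(\partial\omega)$ restricted to $\Gamma_\omega$.

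Given surjectivity, define $\Rsfb_\omega$ as the minimal-$\Lb^2$-norm right inverse of $\Lambda$, which is linear. For the bound, map $\omega$ to a reference configuration; by \ref{ass:bddnum} and shape regularity there are finitely many such configurations up to affinity. On the reference configuration, $\|\hat\Rsfb\hat\gb\|\le C\|\hat\gb\|$ by finite dimensionality. For the scaling, covariant Piola gives $\|\vb\|_{\Lb^2(\omega)}\sim h^{1/2}\|\hat\vb\|$ and $\curl\vb\sim h^{-2}\widehat{\curl\vb}$, so $\|\gb\|_{\Lb^2(\Gamma_\omega)}\sim h^{-2}h\|\hat\gb\|=h^{-1}\|\hat\gb\|$. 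Combining these yields $\|\Rsfb_\omega\gb\|_{\Lb^2(\omega)}\le Ch^{1/2}\cdot h\|\gb\|_{\Lb^2(\Gamma_\omega)}=Ch^{3/2}\|\gb\|_{\Lb^2(\Gamma_\omega)}$, matching \eqref{eq:bddtestfuncbound}. Lemma~\ref{thm:scalingarg} is the prototype for this scaling step, and it can be used to compare $\|\grad\psi\|_{\Lb^2(\omega)}$ with $\|\gb\|$ if the explicit route via $\psi\nb$ is pursued instead.
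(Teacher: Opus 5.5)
Your proof has a genuine gap. The reduction requires local liftings with $\gammat\Rsfb_\omega\gb=\zerob$ on \emph{all} of $\partial\omega$, and then the surjectivity of $\Lambda$ from that space onto $\Mb^\Gamma_\omega$. That surjectivity is false in general, so the ``delicate combinatorial part'' cannot be carried out.

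Take $k=0$. A function in $\Sigmab_h(\omega)$ with vanishing tangential trace on the whole of $\partial\omega$ can carry nonzero DoFs only on edges lying in the interior of $\omega$. Consider a patch made of a single tetrahedron with one face on $\Gamma$, or the blue two-element patch of Figure~\ref{fig:patch}. Every edge not on $\Gamma$ joins the apex to a rim vertex of $\Gamma_\omega$, and is therefore also an edge of elements outside $\omega$. Hence the domain of $\Lambda$ is $\{\zerob\}$. Meanwhile $\dim\Mb^\Gamma_\omega$ is the number of vertices of $\Gamma_\omega$ minus one, namely $2$ or $3$ in these two cases. For a single tetrahedron the deficit persists for $k=1,2,3$: the interior bubbles have dimension $3\binom{k+1}{3}$, against $\dim\Mb^\Gamma_\omega=\binom{k+3}{2}-1$.

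Separately, your claim that $\gammax\curl\vb$ is automatically a surface gradient once $\gammat\vb=\zerob$ on $\Gamma_\omega$ is also wrong for $k\ge 1$. The surface divergence of the tangential part of $\curl\vb$ equals $-\partial_n(\curl\vb\cdot\nb)$, not zero. For example, $\vb=(0,xz,-xy)^\top=\xb\times(x,0,0)^\top$ lies in the $k=1$ space and has zero tangential trace on $\{z=0\}$. However, $\gammax\curl\vb=\pm(y,2x,0)^\top$ there, which is not surface-curl free.

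The obstacle you identified does not actually need to be removed. That obstacle is that $\psi\nb$, and hence any curl-preimage of $\curl(\psi\nb)$, has nonzero tangential trace on $\partial\omega\setminus\Gamma_\omega$. The fix, which is the paper's construction, is as follows.

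\begin{enumerate}
\item Impose the boundary condition only on $\Gamma_\omega$. The field $\curl(\psi\nb)=\grad\psi\times\nb$ is divergence free with zero normal trace on $\Gamma_\omega$.
\item The exact sequence \eqref{eq:exseqbulk}, with boundary conditions on $\Gamma_\omega$ only, then yields $\wb_\omega\in\Sigmab_{h,0}(\omega)\cap(\grad P_{h,0}(\omega))^\perp$ with $\curl\wb_\omega=\curl(\psi\nb)$.
\item This $\wb_\omega$ satisfies $\|\wb_\omega\|_{\Lb^2(\omega)}\le Ch\|\grad\psi\|_{\Lb^2(\omega)}\le Ch^{3/2}\|\gb\|_{\Lb^2(\Gamma_\omega)}$, using Lemma~\ref{thm:scalingarg} for the second inequality.
\item Extend $\wb_\omega$ to $\Omega$ by keeping its DoFs on the entities of $\omega$ and setting all other DoFs to zero. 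This extension $\widetilde\wb_\omega$ is $\Lb^2$-stable.
\item The extension does spill into elements adjacent to $\omega$ across $\partial\omega\setminus\Gamma_\omega$, but this is harmless. By \ref{ass:isolated}, none of those elements belongs to another patch, so none has a face on $\Gamma$. Moreover, all DoFs on $\Gamma$ vanish.
\item Consequently $-\sum_\omega\widetilde\wb_\omega\in\Sigmab_{h,0}$ satisfies $\gammax\curl(\cdot)=\gb$ on every $\Gamma_\omega$, with the required $h^{3/2}$ bound.
\end{enumerate}

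With this route, no surjectivity argument and no reference-configuration compactness are needed.
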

\begin{proof}
    Given $\gb^\Gamma_h \in \Mb^\Gamma_h$, we construct $\Rsfb_h \gb^\Gamma_h$ patch by patch. For each patch $\omega \in \Wcal_h$, denote $\gb_\omega^\Gamma := \gb^\Gamma_h|_{\Gamma_\omega} \in \Mb_\omega^\Gamma$. By the definition of $\Mb_\omega^\Gamma$ in~\eqref{def:Mb}, there exists $\psi^\Gamma_\omega \in P_h(\Gamma_\omega)$ such that $\gradGamma \psi^\Gamma_\omega = \gb_\omega^\Gamma$ and $\langle \psi^\Gamma_\omega, 1\rangle_{\Gamma_\omega} = 0$.
    Let $\psi_\omega \in P_h(\omega)$ denote the extension of $\psi^\Gamma_\omega$ to $\omega$ by keeping boundary DoFs while setting interior DoFs to zero. 
    In other words, $\psi_\omega \in P_{h,\Gamma}(\omega)$ (see definition in Lemma~\ref{thm:scalingarg}).
    It is easy to see that $\gradGamma \psi^\Gamma_\omega = \gammat \grad \psi_\omega$. 
    Note that the outward normal vector $\nb$ on $\Gamma_\omega$ is a constant vector due to the co-planar assumption~\ref{ass:coplanar}. 
    By vector calculus, we have $\curl (\psi_\omega \nb) = \grad \psi_\omega \times \nb$. Hence, $\gammax\curl (\psi_\omega \nb) = - \gammat\grad\psi_\omega = -\gradGamma \psi^\Gamma_\omega = -\gb^\Gamma_\omega$. 
    It holds that $\curl (\psi_\omega \nb) \in \Vb_{h,0}(\omega)$ since $\psi_\omega$ is polynomial of degree at most $k+1$. Due to the exact sequence~\eqref{eq:exseqbulk}, we can find $\wb_\omega \in \Sigmab_{h,0}(\omega) \cap (\grad P_{h,0}(\omega))^\perp$ such that $\curl \wb_\omega = \curl (\psi_\omega \nb)$ and 
    \begin{equation}\label{eq:discpoincare}
        \|\wb_\omega\|_{\Lb^2(\omega)} \leq Ch\|\curl (\psi_\omega \nb)\|_{\Lb^2(\omega)}
    \end{equation}
    by the discrete \Poincare inequality~\cite[][Theorem~5.3]{arnold_2018}. The $h$-scaling is due to $\mathrm{diam}(\omega)\sim h$ from Assumption~\ref{ass:bddnum}.
    
    Let $\widetilde{\wb}_\omega \in \Sigmab_{h,0}$ be a \Nedelec function defined globally such that its DoFs coincide with those of $\wb_\omega$ on all mesh entities belonging to $\omega$, and are zero otherwise. This extension is $\Lb^2$-bounded, that is, $\|\widetilde{\wb}_\omega\|_{\Lb^2(\Omega)} \leq C\|\wb_\omega\|_{\Lb^2(\omega)}$. Since $\wb_\omega$ has vanishing tangential trace on $\Gamma_\omega$, all boundary DoFs are zero. By the isolated condition~\ref{ass:isolated}, two distinct patches $\omega_1, \omega_2$ share no interior mesh entities. Together with the fact that $\wb_\omega$ has zero DoFs on $\Gamma$, we see that $\widetilde{\wb}_{\omega_1}$ has zero DoFs on all entities associated with $\omega_2$. This yields \emph{patch-wise decoupling}.
    
    We construct $\varphib_h := \Rsfb_h \gb^\Gamma_h \in \Sigmab_{h,0}$ by taking the summation of $\{\widetilde{\wb}_\omega\}_{\omega \in \Wcal_h}$. That is,
    \begin{equation}
        \varphib_h = -\sum_{\omega\in\Wcal_h} \widetilde{\wb}_\omega.
    \end{equation}
    The patch-wise decoupling implies that $\curl\varphib_h|_{\omega} = -\curl \widetilde{\wb}_{\omega}$ and thus 
    \begin{equation}
        \gammax \curl \varphib_h|_{\Gamma_\omega} = -\gammax\curl{\widetilde{\wb}_\omega} = -\gammax\curl{\wb_\omega} = -\gammax\curl{(\psi_\omega\nb)} = \gb_\omega^\Gamma.
    \end{equation}
    This property leads to~\eqref{eq:bddcontrol}.
    To show the global bound~\eqref{eq:bddtestfuncbound}, we first deduce local bounds:
    \begin{equation}\label{eq:localbounds}
        \begin{aligned}
            \|\curl \wb_\omega\|_{\Lb^2(\omega)} &= \|\curl (\psi_\omega \nb)\|_{\Lb^2(\omega)} = \|\grad \psi_\omega \times \nb_\omega\|_{\Lb^2(\omega)} \\
            &\leq \|\grad \psi_\omega\|_{\Lb^2(\omega)} \leq Ch^{1/2}\|\gradGamma \psi_\omega\|_{\Lb^2(\Gamma_\omega)} = Ch^{1/2}\|\gb_\omega^\Gamma\|_{\Lb^2(\Gamma_\omega)}. 
        \end{aligned}
    \end{equation}
    Here we used the estimate~\eqref{eq:scalingarg}.
    Combining~\eqref{eq:discpoincare} and~\eqref{eq:localbounds}, we have
    \begin{equation}
        \|\wb_\omega\|_{\Lb^2(\omega)} \leq Ch\|\curl \wb_\omega\|_{\Lb^2(\omega)} \leq Ch^{3/2} \|\gb_\omega^\Gamma\|_{\Lb^2(\Gamma_\omega)}.
    \end{equation}
    Since the extension is stable in $\Lb^2$, it holds that
        $\|\widetilde{\wb}_\omega\|_{\Lb^2(\Omega)} \leq C\|\wb_\omega\|_{\Lb^2(\omega)}$.
    Due to finite overlap of $\{\supp(\widetilde{\wb}_\omega)\}$, we collect local contributions and obtain
    \begin{equation}
        \begin{aligned}
            \|\varphib_h\|^2 &= \left\|\sum_{\omega\in\Wcal_h} \widetilde{\wb}_{\omega} \right\|^2 \leq C\sum_{\omega\in\Wcal_h} \|\widetilde{\wb}_\omega\|^2_{\Lb^2(\Omega)} \leq C\sum_{\omega\in\Wcal_h} \|\wb_\omega\|^2_{\Lb^2(\omega)} \leq C\sum_{\omega\in\Wcal_h} h^{3} \|\gb_\omega^\Gamma\|_{\Lb^2(\Gamma_\omega)}^2.
        \end{aligned}
    \end{equation}
    In view of~\eqref{eq:l2int}, we can conclude.
\end{proof}

Using the lifting operator $\Rsfb_h$ in Lemma~\ref{thm:lift} and letting $\vb_h = h^{-1}\Rsfb_h \Pib_h^\Gamma\gammat\ub_h$, we have $\langle \gammax \curl \vb_h, \gammat \ub_h  \rangle = h^{-1}\|\Pib^\Gamma_h \gammat \ub_h\|^2_{\Lb^2(\Gamma)}$. This is not yet sufficient to control the full $\Lb^2$ norm $h^{-1/2}\|\gammat \ub_h\|_{\Lb^2(\Gamma)}$. The next lemma states a discrete Poincar\'e-type inequality implying that the remaining part can be controlled by $\|\curl \ub_h\|$. 
\begin{lemma}[trace \Poincare inequality]\label{thm:tracepoincare}
Suppose that the isolated patch condition in Section~\ref{sec:patch} holds. There exists a constant $C > 0$ independent of $h$ such that for each patch $\omega \in \Wcal_h$ and $\ub_h \in \Sigmab_h$, it holds that
\begin{equation}\label{eq:tracepoincare}
    \|\gammat\ub_h - \Pib_h^\Gamma \gammat\ub_h\|_{\Lb^2(\Gamma_\omega)} \leq Ch^{1/2}\|\curl \ub_h\|_{\Lb^2(\omega)}. 
\end{equation}
\end{lemma}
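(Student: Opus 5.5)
We argue patch by patch, combining the surface exact sequence~\eqref{eq:exseqsurf} with a finite-dimensional scaling argument. Fix $\omega \in \Wcal_h$ and write $\gb := \gammat \ub_h|_{\Gamma_\omega} \in \Sigmab_h(\Gamma_\omega)$. By~\eqref{def:PiMb} the restriction of $\Pib_h^\Gamma$ to $\Gamma_\omega$ is $\Pib_\omega^\Gamma$. The left-hand side of~\eqref{eq:tracepoincare} is therefore $\|\gb - \Pib_\omega^\Gamma \gb\|_{\Lb^2(\Gamma_\omega)}$, which is the distance from $\gb$ to $\Mb_\omega^\Gamma = \gradGamma P_h(\Gamma_\omega)$, the kernel of $\curlGamma$ on $\Sigmab_h(\Gamma_\omega)$ by~\eqref{eq:exseqsurf} (this uses \ref{ass:coplanar}). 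The strategy has two steps. First, bound this distance by $\|\curlGamma \gb\|$ through a discrete surface Poincar\'e inequality. Second, bound $\curlGamma \gammat \ub_h$ by the normal trace of $\curl \ub_h$ and apply an inverse trace inequality.

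\textbf{Step 1 (surface discrete Poincar\'e).} The quotient $\Sigmab_h(\Gamma_\omega)/\Mb_\omega^\Gamma$ is finite dimensional, and on it $\gb \mapsto \|\curlGamma \gb\|_{\Lb^2(\Gamma_\omega)}$ is a norm by exactness of~\eqref{eq:exseqsurf}. We obtain
\begin{equation*}
\|\gb - \Pib_\omega^\Gamma \gb\|_{\Lb^2(\Gamma_\omega)} \leq C h \|\curlGamma \gb\|_{\Lb^2(\Gamma_\omega)}.
\end{equation*}
This can be cited as the surface analogue of~\cite[][Theorem~5.3]{arnold_2018}, or proved directly. For the direct proof, map the patch to a reference configuration of unit size. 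By \ref{ass:bddnum} and shape regularity, only finitely many reference configurations (up to a compact family) occur, so the constant is uniform. Under the covariant Piola pullback, the tangential $\Lb^2$ norm scales like $h^{1/2}$ times a reference norm, up to the Jacobian factor $h^{-1}$ for the one-form, while $\curlGamma$ introduces one more factor $h^{-1}$. Tracking these powers gives the factor $h$. I expect the main technical point to be the uniformity of the constant over patch shapes: it must be justified either by compactness of the set of admissible shape-regular configurations with a bounded number of elements, or by citing the uniform discrete Poincar\'e bound for FEEC on shape-regular meshes.

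\textbf{Step 2 (commuting trace and inverse trace).} Use the identity $\curlGamma \gammat \ub_h = \gamman \curl \ub_h$ on $\Gamma_\omega$, which holds facewise for smooth fields and hence for N\'ed\'elec functions elementwise. Then, by the inverse trace inequality~\eqref{eq:invtrace} applied on each element of $\omega$ having a face on $\Gamma_\omega$,
\begin{equation*}
\|\curlGamma \gb\|_{\Lb^2(\Gamma_\omega)} \leq \|\curl \ub_h\|_{\Lb^2(\Gamma_\omega)} \leq C h^{-1/2} \|\curl \ub_h\|_{\Lb^2(\omega)}.
\end{equation*}
Here $\curl \ub_h$ is polynomial on each element, and quasi-uniformity gives $h_T \sim h$. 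Combining the two steps yields $Ch \cdot h^{-1/2} = Ch^{1/2}$, which is~\eqref{eq:tracepoincare}.
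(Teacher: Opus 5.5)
Your proposal is correct and follows essentially the same route as the paper: a discrete Poincar\'e inequality on each co-planar, simply connected $\Gamma_\omega$ bounds the distance of $\gammat\ub_h$ to $\Mb^\Gamma_\omega=\ker\curlGamma$ by $Ch\|\curlGamma\gammat\ub_h\|_{\Lb^2(\Gamma_\omega)}$, and then $\curlGamma\gammat\ub_h=\gamman\curl\ub_h$ together with the inverse trace inequality supplies the factor $h^{-1/2}$. Your compactness remark only makes explicit what the paper calls ``scaling arguments''; your bookkeeping slip is harmless, since under a dilation in two dimensions the $\Lb^2(\Gamma_\omega)$ norm of a one-form is invariant rather than multiplied by $h^{1/2}$, and only the relative factor $h$ between the two sides enters.
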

\begin{proof}
    For each patch $\omega$, recall the exact sequence~\eqref{eq:exseqsurf}. For any $\ub_h\in \Sigmab_h$, we have the $\Lb^2$-orthogonal decomposition $\gammat\ub_h = \Pib_h^\Gamma \gammat\ub_h + (\gammat\ub_h - \Pib_h^\Gamma \gammat\ub_h)$. The second term $(\gammat\ub_h - \Pib_h^\Gamma\gammat \ub_h)$ is $\Gamma_\omega$-piecewise $\Lb^2$-orthogonal to $\Mb_\omega^\Gamma (:= \gradGamma P_h(\Gamma_\omega))$. That is, 
    \begin{equation}
        \langle\gammat\ub_h - \Pib_h^\Gamma \gammat\ub_h, \taub^\Gamma_{h,\omega}\rangle_{\Gamma_\omega} = 0 \quad \forall\,\taub^\Gamma_{h,\omega} \in \Mb_\omega^\Gamma.
    \end{equation}
    Then, we apply the discrete Poincar\'e inequality~\cite[][Theorem~5.3]{arnold_2018} on each $\Gamma_\omega$:
    \begin{equation}
        \|\gammat\ub_h - \Pib_h^\Gamma \gammat\ub_h\|_{\Lb^2(\Gamma_\omega)} \leq Ch\|\curlGamma (\gammat\ub_h - \Pib_h^\Gamma \gammat\ub_h)\|_{\Lb^2(\Gamma_\omega)} = Ch\|\curlGamma \gammat\ub_h\|_{\Lb^2(\Gamma_\omega)}.
    \end{equation} 
    The $h$-scaling is due to the fact that $\mathrm{diam}(\Gamma_\omega) \sim h$ from the assumption~\ref{ass:bddnum} and scaling arguments. Note that $\curlGamma \gammat\ub_h = \gamman\curl \ub_h$. By the trace inequality $\|\curlGamma \gammat\ub_h\|_{\Lb^2(\Gamma_\omega)} \leq Ch^{-1/2}\|\curl \ub_h\|_{\Lb^2(\omega)}$, we can conclude.
\end{proof}
\noindent The following seminorm equivalence is a direct consequence of Lemma~\ref{thm:tracepoincare}.
\begin{corollary}[seminorm equivalence]\label{thm:tracenormequiv}
    Suppose that the isolated patch condition in Section~\ref{sec:patch} holds. There exist constants $C_1, C_2 > 0$ independent of $h$ such that
    \begin{equation}\label{eq:tracenormequiv}
        C_1|\ub_h|^2_{\curln,h} \leq \|\curl \ub_h\|^2 + h^{-1}\|\Pib_h^\Gamma \gammat \ub_h\|^2_{\Lb^2(\Gamma)} \leq C_2|\ub_h|_{\curln,h}^2 \quad \forall\,\ub_h \in \Sigmab_h.
    \end{equation}
\end{corollary}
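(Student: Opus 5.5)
The plan is to split $\gammat \ub_h$ on each $\Gamma_\omega$ into $\Pib_h^\Gamma\gammat\ub_h$ and its orthogonal complement, and to bound the complement with Lemma~\ref{thm:tracepoincare}. Both inequalities then follow from the Pythagorean identity for the $\Lb^2$-orthogonal projection together with the trace \Poincare inequality.

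\textbf{Upper bound.} Since $\Pib_\omega^\Gamma$ is an $\Lb^2(\Gamma_\omega)$-orthogonal projection, it is a contraction. Using the interpretation~\eqref{eq:l2int}, this gives $\|\Pib_h^\Gamma\gammat\ub_h\|^2_{\Lb^2(\Gamma)} \leq \sum_\omega \|\gammat\ub_h\|^2_{\Lb^2(\Gamma_\omega)} = \|\gammat\ub_h\|^2_{\Lb^2(\Gamma)}$. The last equality holds because the patches cover $\Tcal_h^\Gamma$, so the surfaces $\Gamma_\omega$ cover $\Gamma$, and by~\ref{ass:isolated} and co-planarity they overlap only on edges, i.e.\ on sets of surface measure zero. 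Hence $C_2 = 1$.

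\textbf{Lower bound.} On each $\Gamma_\omega$ the Pythagorean identity gives
\begin{equation*}
\|\gammat\ub_h\|^2_{\Lb^2(\Gamma_\omega)} = \|\Pib_h^\Gamma\gammat\ub_h\|^2_{\Lb^2(\Gamma_\omega)} + \|\gammat\ub_h - \Pib_h^\Gamma\gammat\ub_h\|^2_{\Lb^2(\Gamma_\omega)}.
\end{equation*}
Lemma~\ref{thm:tracepoincare} bounds the second term by $Ch\|\curl\ub_h\|^2_{\Lb^2(\omega)}$. Multiplying by $h^{-1}$ and summing over $\omega\in\Wcal_h$ yields
\begin{equation*}
h^{-1}\|\gammat\ub_h\|^2_{\Lb^2(\Gamma)} \leq h^{-1}\|\Pib_h^\Gamma\gammat\ub_h\|^2_{\Lb^2(\Gamma)} + C\sum_{\omega}\|\curl\ub_h\|^2_{\Lb^2(\omega)}.
\end{equation*}

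To pass from $\sum_\omega\|\curl\ub_h\|^2_{\Lb^2(\omega)}$ to $C\|\curl\ub_h\|^2$, I need the patches to have finite overlap. They are unions of elements of $\Tcal_h^\Gamma$, and by~\ref{ass:isolated} distinct patches share no interior faces, so they can share no elements (a shared tetrahedron would bring shared interior faces). Hence each element lies in at most one patch and the overlap constant is $1$. Adding $\|\curl\ub_h\|^2$ to both sides then gives $|\ub_h|^2_{\curln,h} \leq (1+C)\bigl(\|\curl\ub_h\|^2 + h^{-1}\|\Pib_h^\Gamma\gammat\ub_h\|^2_{\Lb^2(\Gamma)}\bigr)$, so $C_1 = (1+C)^{-1}$.

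The only point needing care is this bookkeeping: that the surfaces $\Gamma_\omega$ tile $\Gamma$ and that the patch volumes overlap boundedly. Both follow directly from the patch decomposition assumptions, so I expect no real obstacle.
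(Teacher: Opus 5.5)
Your argument is correct and is exactly what the paper means by calling this corollary a direct consequence of Lemma~\ref{thm:tracepoincare}. The upper bound (with $C_2=1$) is the $\Lb^2$-contractivity of each $\Pib_\omega^\Gamma$, and the lower bound is the Pythagorean splitting on each $\Gamma_\omega$ combined with~\eqref{eq:tracepoincare} and summation over the non-overlapping patches. One point of bookkeeping: the disjointness of patches, and hence that each boundary face (owned by a single element) lies in exactly one $\Gamma_\omega$, is best taken from $\Wcal_h$ being a decomposition of $\Tcal_h^\Gamma$ in the ``no shared mesh entities'' sense used in the proof of Lemma~\ref{thm:lift}; co-planarity plays no role in that step.
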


Lastly, we shall need the following discrete Poincar\'e-type inequality.
\begin{lemma}[a discrete Poincar\'e inequality]\label{thm:discpoincareaux}
    There exists a constant $C > 0$ independent of $h$ such that: for each $\vb_h \in \Sigmab_h$, there exists $r_h \in P_{h,0}$ linearly dependent on $\vb_h$ and
    \begin{equation}\label{eq:discpoincareaux}
        \|\vb_h  - \grad r_h\| \leq C |\vb_h|_{\curln,h}.
    \end{equation}
\end{lemma}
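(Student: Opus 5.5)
The plan is to reduce the estimate to the boundary-conforming case via a stable lifting of the tangential trace, and then use a discrete Hodge decomposition in $\Sigmab_{h,0}$.

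\emph{Step 1: lifting the boundary trace.} Given $\vb_h \in \Sigmab_h$, we construct $\vb_h^\Gamma \in \Sigmab_h$ whose degrees of freedom on boundary edges and faces coincide with those of $\vb_h$, and whose remaining degrees of freedom vanish. Then $\gammat \vb_h^\Gamma = \gammat \vb_h$ on $\Gamma$, and $\vb_h^\Gamma$ is supported in the boundary layer $\Tcal_h^\Gamma$. A standard scaling argument on each $T \in \Tcal_h^\Gamma$ (shape regularity, and the fact that boundary DoFs of Nédélec elements are controlled by the tangential trace on the boundary face) gives
\begin{equation*}
\|\vb_h^\Gamma\|_{\Lb^2(T)} \leq C h_T^{1/2} \|\gammat \vb_h\|_{\Lb^2(\partial T\cap\Gamma)}, \qquad \|\curl \vb_h^\Gamma\|_{\Lb^2(T)} \leq C h_T^{-1/2}\|\gammat \vb_h\|_{\Lb^2(\partial T\cap\Gamma)},
\end{equation*}
where the second bound follows from the first via the inverse inequality~\eqref{eq:inv}. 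Summing over $T \in \Tcal_h^\Gamma$ and using quasi-uniformity yields $\|\vb_h^\Gamma\| \leq C h \cdot h^{-1/2}\|\gammat\vb_h\|_{\Lb^2(\Gamma)}$ and $\|\curl \vb_h^\Gamma\| \leq C h^{-1/2}\|\gammat\vb_h\|_{\Lb^2(\Gamma)}$. Both are therefore bounded by $C|\vb_h|_{\curln,h}$ for $h \leq \mathrm{diam}(\Omega)$.

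\emph{Step 2: Hodge decomposition of the remainder.} Set $\vb_{h,0} := \vb_h - \vb_h^\Gamma \in \Sigmab_{h,0}$. Let $r_h \in P_{h,0}$ be the $\Lb^2$-orthogonal projection of $\vb_{h,0}$ onto $\grad P_{h,0}$; more precisely, $r_h$ solves
\begin{equation*}
(\grad r_h, \grad q_h) = (\vb_{h,0}, \grad q_h) \qquad \forall\, q_h \in P_{h,0}.
\end{equation*}
This defines $r_h$ uniquely and linearly in $\vb_h$. Then $\vb_{h,0} - \grad r_h \in \Sigmab_{h,0} \cap (\grad P_{h,0})^\perp$. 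Since $\Omega$ has trivial topology, the global exact sequence~\eqref{eq:exseqbulk} holds on $\Omega$, and the discrete Poincaré inequality~\cite[][Theorem~5.3]{arnold_2018} gives
\begin{equation*}
\|\vb_{h,0} - \grad r_h\| \leq C\|\curl \vb_{h,0}\| \leq C\bigl(\|\curl\vb_h\| + \|\curl\vb_h^\Gamma\|\bigr) \leq C|\vb_h|_{\curln,h}.
\end{equation*}
The constant here is $h$-independent because this is the global discrete Poincaré inequality on the fixed domain $\Omega$, which holds uniformly thanks to bounded commuting projections.

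\emph{Step 3: conclusion.} The triangle inequality gives
\begin{equation*}
\|\vb_h - \grad r_h\| \leq \|\vb_h^\Gamma\| + \|\vb_{h,0}-\grad r_h\| \leq C|\vb_h|_{\curln,h}.
\end{equation*}

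The main obstacle is Step 1. One must verify that zeroing the interior DoFs yields a function whose $\Lb^2$ norm on each boundary element is controlled by the boundary tangential trace alone. This is a finite-dimensional norm-equivalence argument on the reference element: the boundary DoFs are functionals of $\gammat\vb$ on the boundary face, so a function with only boundary DoFs nonzero and vanishing tangential trace on that face is zero. The only delicate point is elements touching $\Gamma$ only at an edge, whose boundary-edge DoFs are likewise controlled by the trace on an adjacent boundary face via an edge-to-face inverse estimate. No isolated patch condition is needed for this step.
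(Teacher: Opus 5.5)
Your proposal is correct and follows the paper's proof essentially step by step. You use the same splitting of $\vb_h$ into a part with only boundary DoFs, bounded by $Ch^{1/2}\|\gammat\vb_h\|_{\Lb^2(\Gamma)}$, plus a remainder in $\Sigmab_{h,0}$, followed by the discrete Hodge decomposition and Poincar\'e inequality in $\Sigmab_{h,0}$ and the inverse inequality for $\|\curl\vb_h^\partial\|$. Your remark on elements meeting $\Gamma$ only along an edge fills in a detail the paper leaves to ``a simple scaling argument''; note that these elements lie outside $\Tcal_h^\Gamma$ as the paper defines it, so your sum should run over all elements with a boundary edge.
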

\begin{proof}
    Denote $\vb^\partial_h \in \Sigmab_h$ as the function that shares the same DoFs as $\vb_h$ on $\Gamma$ and vanishes on interior DoFs. Consider the splitting $\vb_h = \vb_{h}^\circ + \vb_{h}^\partial$ where $\vb_{h}^\circ \in \Sigmab_{h,0}$. By a simple scaling argument, one can verify that 
    \begin{equation}\label{eq:cc:bdryscale}
        \|\vb_h^\partial\| \leq C_{\mathrm{1}} h^{1/2} \|\gammat\vb_h\|_{\Lb^2(\Gamma)}.
    \end{equation}
    Recall the discrete Hodge decomposition of $\vb_h^\circ \in \Sigmab_{h,0}$~\cite[][Eq.~5.6]{arnold_2018}:
    \begin{equation}\label{eq:cc:hodge}
        \vb_h^\circ = \grad r_h + \mub_h, \quad r_h \in P_{h,0},\, \mub_h \in \Sigmab_{h,0} \cap (\grad P_{h,0})^\perp,
    \end{equation}
    and the discrete \Poincare inequality~\cite[][Theorem~5.3]{arnold_2018}:
    \begin{equation}\label{eq:cc:discpoincare}
        \|\mub_h\| \leq C_\mathrm{2} \|\curl \mub_h\| = C_2 \|\curl \vb_h^\circ\|.
    \end{equation}
    Using~\eqref{eq:cc:bdryscale}\eqref{eq:cc:hodge}\eqref{eq:cc:discpoincare} and the inverse inequality~\eqref{eq:inv}, we have
    \begin{equation}
        \begin{aligned}
            \|\vb_h - \grad r_h\| &\leq \|\mub_h\| + \|\vb_h^\partial\| \\
            &\leq C_2 \|\curl \vb_h^\circ\| + C_1 h^{1/2} \|\gammat\vb_h\|_{\Lb^2(\Gamma)} \\
            &\leq C_2 (\|\curl \vb_h\| + \|\curl \vb_h^\partial\|) + C_1 h^{1/2} \|\gammat\vb_h\|_{\Lb^2(\Gamma)} \\
            &\leq C_2 \|\curl \vb_h\| + C_2 C_{\text{inv}} h^{-1}\|\vb_h^\partial\| + C_1 h^{1/2} \|\gammat\vb_h\|_{\Lb^2(\Gamma)} \\
            &\leq C_2 \|\curl \vb_h\| + C_2C_{\text{inv}}C_1 h^{-1/2} \|\gammat\vb_h\|_{\Lb^2(\Gamma)} + C_1 h^{1/2} \|\gammat\vb_h\|_{\Lb^2(\Gamma)}\\
            &\leq C_4 |\vb_h|_{\curln, h}.
        \end{aligned}
    \end{equation}
\end{proof}

Now, we have all the ingredients to show the inf-sup estimate:
\begin{lemma}[inf-sup estimate]\label{thm:infsupasym}
Suppose that the isolated patch condition in Section~\ref{sec:patch} holds. There exists a constant $\beta > 0$ independent of $h$ such that
\begin{equation}\label{eq:infsupasym}
    \sup_{\vb_h \in \Sigmab_h}\frac{\a^\mathrm{asym}_{\curln,h}(\ub_h,\vb_h)}{\|\vb_h\|_{\curln,h}} \geq \beta |\ub_h|_{\curln,h} \quad \forall\,\ub_h\in\Sigmab_h.
\end{equation}
\end{lemma}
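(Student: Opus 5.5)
We construct the test function as the combination $\vb_h := \ub_h + \delta\, h^{-1}\Rsfb_h \Pib_h^\Gamma \gammat\ub_h - \grad r_h$. Here $\Rsfb_h$ is the lifting of Lemma~\ref{thm:lift}, $r_h \in P_{h,0}$ is the function associated with $\ub_h$ by Lemma~\ref{thm:discpoincareaux}, and $\delta>0$ is a small parameter fixed later. The gradient correction is needed only so that $\|\vb_h\|$ can be bounded in terms of $|\ub_h|_{\curln,h}$: $\ub_h$ itself is not controlled in $\Lb^2$, but $\ub_h-\grad r_h$ is. The correction does not change the value of the bilinear form. Indeed, $\curl\grad r_h=\zerob$, and since $r_h\in P_{h,0}$ the tangential trace $\gammat\grad r_h=\gradGamma r_h$ vanishes on $\Gamma$. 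Hence $\a^\mathrm{asym}_{\curln,h}(\ub_h,\grad r_h)=0$.

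First we compute the two contributions. Asymmetry gives $\a^\mathrm{asym}_{\curln,h}(\ub_h,\ub_h)=\|\curl\ub_h\|^2$, because the two boundary terms cancel. Write $\varphib_h:=h^{-1}\Rsfb_h\Pib_h^\Gamma\gammat\ub_h\in\Sigmab_{h,0}$. Then $\gammat\varphib_h=\zerob$, so the term $-\langle\gammax\curl\ub_h,\gammat\varphib_h\rangle$ drops out. By \eqref{eq:bddcontrol} and $\Lb^2$-orthogonality of the projection,
\[
\a^\mathrm{asym}_{\curln,h}(\ub_h,\varphib_h)=(\curl\ub_h,\curl\varphib_h)+h^{-1}\|\Pib_h^\Gamma\gammat\ub_h\|^2_{\Lb^2(\Gamma)}.
\]
For the cross term we combine the inverse inequality \eqref{eq:inv} with \eqref{eq:bddtestfuncbound}:
\[
\|\curl\varphib_h\|\le Ch^{-1}\|\varphib_h\|\le Ch^{-1}\cdot h^{-1}h^{3/2}\|\Pib_h^\Gamma\gammat\ub_h\|_{\Lb^2(\Gamma)}=C h^{-1/2}\|\Pib_h^\Gamma\gammat\ub_h\|_{\Lb^2(\Gamma)}.
\]
Young's inequality then yields
\[
\a^\mathrm{asym}_{\curln,h}(\ub_h,\varphib_h)\ge -\tfrac{C^2}{2}\|\curl\ub_h\|^2+\tfrac12 h^{-1}\|\Pib_h^\Gamma\gammat\ub_h\|^2_{\Lb^2(\Gamma)}.
\]
Choosing $\delta$ small, for instance $\delta C^2\le 1$, we obtain
\[
\a^\mathrm{asym}_{\curln,h}(\ub_h,\vb_h)\ge \tfrac12\|\curl\ub_h\|^2+\tfrac{\delta}{2}h^{-1}\|\Pib_h^\Gamma\gammat\ub_h\|^2_{\Lb^2(\Gamma)}\ge c\,|\ub_h|^2_{\curln,h},
\]
where the last step is the lower bound of Corollary~\ref{thm:tracenormequiv}.

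Next we bound $\|\vb_h\|_{\curln,h}\le C|\ub_h|_{\curln,h}$ term by term.
\begin{itemize}
\item For the $\Lb^2$ part, $\|\ub_h-\grad r_h\|\le C|\ub_h|_{\curln,h}$ by Lemma~\ref{thm:discpoincareaux}, and $\|\varphib_h\|\le Ch^{1/2}\|\Pib_h^\Gamma\gammat\ub_h\|_{\Lb^2(\Gamma)}\le Ch\,|\ub_h|_{\curln,h}$, using that $\Pib_h^\Gamma$ is an $\Lb^2$ contraction.
\item For the curl part, $\curl\vb_h=\curl\ub_h+\delta\curl\varphib_h$, and $\|\curl\varphib_h\|\le C|\ub_h|_{\curln,h}$ by the estimate above.
\item For the trace part, $\gammat\vb_h=\gammat\ub_h$, because both $\varphib_h$ and $\grad r_h$ have vanishing tangential trace.
\end{itemize}
Dividing the lower bound on the bilinear form by this upper bound on $\|\vb_h\|_{\curln,h}$ gives \eqref{eq:infsupasym}.

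The substantive difficulty, controlling the boundary trace, is already handled by Lemmas~\ref{thm:lift} and~\ref{thm:tracepoincare}. The step requiring the most care here is the $h$-scaling of the cross term $(\curl\ub_h,\curl\varphib_h)$. The $h^{3/2}$ bound in \eqref{eq:bddtestfuncbound} must exactly compensate the factors $h^{-1}$ from the inverse inequality and $h^{-1}$ from the scaling of $\varphib_h$, leaving the matching factor $h^{-1/2}$ needed for Young's inequality. A second point that is easy to miss is that the norm on test functions includes $\|\vb_h\|$, so the gradient shift via Lemma~\ref{thm:discpoincareaux} is indispensable.
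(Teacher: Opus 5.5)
Your proposal is correct and essentially reproduces the paper's proof. It uses the same test function $\vb_h = \ub_h - \grad r_h + \alpha h^{-1}\Rsfb_h\Pib_h^\Gamma\gammat\ub_h$ and the same cancellations, and it absorbs the cross term $(\curl\ub_h,\curl\varphib_h)$ with the same ingredients: the inverse inequality, the $h^{3/2}$ lifting bound, Young's inequality and Corollary~\ref{thm:tracenormequiv}. The only differences are cosmetic: you fold $h^{-1}$ into $\varphib_h$ and choose different weights in Young's inequality.
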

\begin{proof}
    Fix $\ub_h \in \Sigmab_h$. Let $\varphib_h := \Rsfb_h \Pib_h^\Gamma(\gammat \ub_h) \in \Sigmab_{h,0}$ (see Lemma~\ref{thm:lift}) and $r_h \in P_{h,0}$ be defined as in Lemma~\ref{thm:discpoincareaux}. We set $\vb_h = \ub_h - \grad r_h + \alpha h^{-1}\varphib_h$ where $\alpha > 0$ is a constant to be determined. Noting that $\curl\grad r_h = 0$ and $\gammat \grad r_h = 0$, we have 
    \begin{equation}
        \begin{aligned}
            \a^\mathrm{asym}_{\curln,h}(\ub_h,\vb_h) &= \|\curl \ub_h\|^2 + \alpha h^{-1}(\curl \ub_h, \curl \varphib_h) - \langle \gammax\curl \ub_h, \gammat \ub_h \rangle  \\
            &\quad\quad - \alpha h^{-1} \langle \gammax\curl \ub_h, \gammat\varphib_h \rangle + \langle \gammax \curl \ub_h, \gammat\ub_h \rangle + \alpha h^{-1} \langle \gammax\curl \varphib_h, \gammat \ub_h\rangle\\
            &= \|\curl \ub_h\|^2 + \alpha h^{-1}(\curl \ub_h, \curl \varphib_h) + \alpha h^{-1} \langle \Pib_h^\Gamma(\gammat \ub_h), \gammat \ub_h\rangle\\
            &= \|\curl \ub_h\|^2 + \alpha h^{-1}(\curl \ub_h, \curl \varphib_h) + \alpha h^{-1} \|\Pib_h^\Gamma(\gammat \ub_h)\|^2_{\Lb^2(\Gamma)} \\
            &\geq \frac{1}{2} \|\curl \ub_h\|^2 - \frac{1}{2}\alpha^2h^{-2}\|\curl \varphib_h\|^2 + \alpha h^{-1} \|\Pib_h^\Gamma(\gammat \ub_h)\|^2_{\Lb^2(\Gamma)} \\
            &\geq \frac{1}{2} \|\curl \ub_h\|^2 - \frac{1}{2}\alpha^2C_{\text{inv}}h^{-4}\|\varphib_h\|^2 + \alpha h^{-1} \|\Pib_h^\Gamma(\gammat \ub_h)\|^2_{\Lb^2(\Gamma)} \\
            &\geq \frac{1}{2} \|\curl \ub_h\|^2 + \alpha \left(1 - \frac{1}{2}\alpha C_{\text{inv}}C_{\text{est}}\right)h^{-1}\|\Pib_h^\Gamma(\gammat \ub_h)\|^2_{\Lb^2(\Gamma)}.
        \end{aligned}
    \end{equation}  
    Here, we have used the fact that $\gammat \varphib_h = \zerob$ in the second step; the property~\eqref{eq:bddcontrol} in the third step; the Young's inequality in the fourth step; the inverse inequality~\eqref{eq:inv} in the fifth step; and the estimate~\eqref{eq:bddtestfuncbound} in the last step. A suitable choice of $\alpha$ (e.g., $\alpha = 1/C_{\text{inv}}C_{\text{est}}$) and the norm equivalence in~\eqref{eq:tracenormequiv} yield that $\a^\mathrm{asym}_{\curln,h}(\ub_h,\vb_h) \geq C_1|\ub_h|^2_{\curln,h}$.
    By the inverse inequality~\eqref{eq:inv}, the stability of $\Rsfb_h$~\eqref{eq:bddtestfuncbound} and the estimate~\eqref{eq:discpoincareaux}, we have $\|\vb_h\|_{\curln,h} \leq C_2|\ub_h|_{\curln,h}$ and we can conclude.
\end{proof}

\begin{remark}[Failure of using the \Nedelec elements of the second kind]\label{rmk:type2fail}
    If the discrete space is chosen as the \Nedelec elements \emph{of the second kind}:
    \begin{equation}
        \Sigmab_h^{\mathrm{2nd}} := \{\vb_h \in \Hb(\curln): \vb_h|_T \in \Pcal_k(T)^3, T \in \Tcal_h\}, k\geq 1,
    \end{equation}
    the inf-sup estimate in Lemma~\ref{thm:infsupasym} does not hold. 
    Recall that the key step in establishing~\eqref{eq:infsupasym} is to gain sufficient control over the boundary norm $\|\gammat\ub_h\|_{\Lb^2(\Gamma)}$ via $\langle \gammax \curl \vb_h, \gammat \ub_h  \rangle$. The failure of $\Sigmab_h^\text{2nd}$ is due to a richer trace space $\gammat \Sigmab_h^{\mathrm{2nd}}$ than $\gammat \Sigmab_h$, while $\gammax \curl \Sigmab_h^{\mathrm{2nd}}$ is equal to $\gammax \curl \Sigmab_h$ (see~\cite[][Section~7.5]{arnold_2018}). 
\end{remark}

\subsection{Necessary regularity for consistency}\label{sec:regconsis}
In the subsequent error analysis for the two model problems~\eqref{eq:cc} and~\eqref{eq:mad}, we will rely on \emph{consistency} of the schemes (see~\eqref{eq:cc:consis} and~\eqref{eq:mag:consissd}), which requires plugging exact solutions into the first argument of the bilinear form $\a^\mathrm{asym}_{\curln,h}$. To make $\a^\mathrm{asym}_{\curln,h}(\ub, \vb_h)$ well-defined for $\vb_h \in \Sigmab_h$, we need $\gammat \ub \in \Lb^2(\Gamma)$ and $\gammax \curl \ub \in \Lb^2(\Gamma)$. Recall that the Dirichlet trace operator $\gamma$ maps $H^{s}(\Omega)$ boundedly to $H^{s-1/2}(\Gamma)$ for $s > 1/2$ on bounded Lipschitz domains~\cite[][Theorem~1.5.1.2]{grisvard_2011}. A sufficient regularity is $\ub \in \Hb^{1/2+\epsilon}(\curln), \epsilon > 0$, where
\begin{equation}\label{def:Hscurl}
    \Hb^s(\curln) := \{\vb \in \Hb^s(\Omega): \curl \vb \in \Hb^s(\Omega)\}.
\end{equation}
Therefore, $\ub\in\Hb^{1/2+\epsilon}(\curln)$ is assumed throughout in the subsequent sections. Readers are referred to~\cite[][Section~5.2]{pietro_2012} for a refined technique addressing minimal regularity solutions, which is beyond the scope of this work.

\section{The curl-elliptic problem}\label{sec:curlcurl}
In this section we consider the $\curln$-elliptic problem~\cite{hiptmair_2002}:
    \begin{equation}\label{eq:cc}
        \begin{aligned}
        \curl\curl\ub  &= \fb && \text{in }\Omega,\\
        \div \ub &= 0 && \text{in }\Omega,\\
        \gammat\ub &= \zerob && \text{on }\Gamma,
        \end{aligned}
    \end{equation}
    where $\fb$ is assumed to be solenoidal. A Lagrange multiplier $p \in H^1_0$ is introduced to enforce the divergence constraint. The variational form of~\eqref{eq:cc} seeks $\ub \in \Hzcurl$ and $p \in H^1_0$ such that
    \begin{equation}\label{eq:ccwf}
        \B_{\curln}((\ub, p), (\vb, q)) = (\fb, \vb) \quad \forall\,(\vb, q) \in \Hzcurl\times H^1_0,
    \end{equation}
    where
    \begin{equation}
        \B_{\curln}((\ub, p), (\vb, q)) := \a_{\curln}(\ub, \vb) + (\grad p, \vb) + (\grad q, \ub).
    \end{equation}
    It is easy to see that $p = 0$ as the solution of~\eqref{eq:ccwf}.
    The standard FEEC discretization of~\eqref{eq:cc-intro} seeks $(\ub_h, p_h) \in \Sigmab_{h,0} \times P_{h,0}$ such that
    \begin{equation}\label{eq:cc:std}
        \B_{\curln}((\ub_h, p_h), (\vb_h, q_h)) = (\fb, \vb_h) \quad \forall\,(\vb_h, q_h) \in \Sigmab_{h,0} \times P_{h,0}.
    \end{equation}
    Using the penalty-free asymmetric Nitsche's method, we discretize~\eqref{eq:cc} by seeking $(\ub_h, p_h) \in \Sigmab_{h} \times P_{h,0}$ such that
    \begin{equation}\label{eq:ccasym}
        \B^{\mathrm{asym}}_{\curln}((\ub_h, p_h), (\vb_h, q_h)) = (\fb, \vb_h) \quad \forall\,(\vb_h, q_h) \in \Sigmab_{h} \times P_{h,0},
    \end{equation}
    where
    \begin{equation}
        \begin{aligned}
            \B^{\mathrm{asym}}_{\curln}((\ub_h, p_h), (\vb_h, q_h)) := \a^{\mathrm{asym}}_{\curln, h}(\ub_h, \vb_h) + (\grad p_h, \vb_h) + (\grad q_h, \ub_h).
        \end{aligned}
    \end{equation}
    \subsection{Consistency, boundedness and inf-sup stability}
    It is straightforward to verify that the scheme~\eqref{eq:ccasym} is consistent assuming $\ub \in \Hb^s(\curln)$ for some $s > 1/2$ (see~Section~\ref{sec:regconsis}). That is, if $\ub \in \Hb^s(\curln)$ solves~\eqref{eq:cc}, then
    \begin{equation}\label{eq:cc:consis}
        \B^{\mathrm{asym}}_{\curln}((\ub, 0), (\vb_h, q_h)) = (\fb, \vb_h) \quad \forall\,(\vb_h, q_h) \in \Sigmab_{h} \times P_{h,0}.
    \end{equation}
    Recall the definition of $\|\cdot\|_{\curln,h}$ in~\eqref{def:hseminorm}. We further define norms:
    \begin{equation}
        \begin{aligned}
        \vertiii{\ub}_{\curln, h}^2 &:= \|\ub\|_{\curln,h}^2 + h\|\gammax\curl\ub\|^2_{\Lb^2(\Gamma)} &&\text{ for }\ub \in \Sigmab_h + \Hb^{1/2+\epsilon}(\curln),\\
        \|(\ub, p)\|_{h}^2 &:= \|\ub\|_{\curln, h}^2 + \|\grad p\|^2 &&\text{ for }(\ub, p) \in \left(\Sigmab_h + \Hb^{1/2+\epsilon}(\curln)\right) \times H^1_0,\\
        \vertiii{(\ub, p)}^2_{h} &:= \vertiii{\ub}_{\curln, h}^2 + \|\grad p\|^2 &&\text{ for }(\ub, p) \in \left(\Sigmab_h + \Hb^{1/2+\epsilon}(\curln)\right) \times H^1_0.
        \end{aligned}
    \end{equation}
    By the inverse inequalities~\eqref{eq:inv}\eqref{eq:invtrace}, it is easy to see that $\vertiii{\cdot}_{\curln, h}$ and $\|\cdot\|_{\curln,h}$ are equivalent on $\Sigmab_h$. Well-posedness and stability of~\eqref{eq:ccasym} hold provided the boundedness and inf-sup stability of $\B^{\mathrm{asym}}_{\curln}$ with respect to the norm $\vertiii{\cdot}_h$. The former is straightforward by the Cauchy-Schwarz inequality, while the latter relies on the inf-sup stability of $\a^{\mathrm{asym}}_{\curln,h}$ established in Lemma~\ref{thm:infsupasym}.
    
    \begin{lemma}[boundedness]\label{thm:ccboundedness}
        There exists a constant $C_{\curln} > 0$ independent of $h$ such that
        \begin{equation}\label{eq:cc:bddness}
            |\B^{\mathrm{asym}}_{\curln}((\ub, p), (\vb, q))| \leq C_{\curln} \vertiii{(\ub, p)}_{h} \vertiii{(\vb, q)}_{h} \quad \forall\,(\ub, p),(\vb, q) \in \left(\Sigmab_h + \Hb^{1/2+\epsilon}(\curln)\right) \times H^1_0.
        \end{equation}
    \end{lemma}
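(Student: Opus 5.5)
The plan is to bound each term of $\B^{\mathrm{asym}}_{\curln}$ separately by Cauchy--Schwarz, inserting the weights $h^{\pm 1/2}$ on the boundary terms so that every factor is one of the components of $\vertiii{\cdot}_{h}$. Expanding,
\begin{equation*}
\B^{\mathrm{asym}}_{\curln}((\ub,p),(\vb,q)) = (\curl\ub,\curl\vb) - \langle \gammax\curl\ub,\gammat\vb\rangle + \langle \gammax\curl\vb,\gammat\ub\rangle + (\grad p,\vb) + (\grad q,\ub).
\end{equation*}
All terms are well defined on $\Sigmab_h + \Hb^{1/2+\epsilon}(\curln)$ by Section~\ref{sec:regconsis}. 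For discrete functions this is clear; for the regular part, $\gammat\ub$ and $\gammax\curl\ub$ lie in $\Lb^2(\Gamma)$.

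The volume terms are immediate:
\begin{equation*}
|(\curl\ub,\curl\vb)|\le\|\curl\ub\|\,\|\curl\vb\|,\quad |(\grad p,\vb)|\le\|\grad p\|\,\|\vb\|,\quad |(\grad q,\ub)|\le\|\grad q\|\,\|\ub\|.
\end{equation*}
Each right-hand side is a product of quantities bounded by $\vertiii{(\ub,p)}_h$ and $\vertiii{(\vb,q)}_h$, since $\|\ub\|$ and $\|\curl\ub\|$ are part of $\|\ub\|_{\curln,h}$.

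For the boundary terms, I would write
\begin{equation*}
|\langle \gammax\curl\ub,\gammat\vb\rangle| \le \bigl(h^{1/2}\|\gammax\curl\ub\|_{\Lb^2(\Gamma)}\bigr)\bigl(h^{-1/2}\|\gammat\vb\|_{\Lb^2(\Gamma)}\bigr) \le \vertiii{\ub}_{\curln,h}\,\vertiii{\vb}_{\curln,h},
\end{equation*}
and symmetrically for $\langle \gammax\curl\vb,\gammat\ub\rangle$. This is exactly why the extra term $h\|\gammax\curl\ub\|^2_{\Lb^2(\Gamma)}$ was built into $\vertiii{\cdot}_{\curln,h}$. No inverse inequality is needed, so the argument works for non-discrete arguments as well.

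Summing the five bounds and applying the discrete Cauchy--Schwarz inequality $\sum a_ib_i\le(\sum a_i^2)^{1/2}(\sum b_i^2)^{1/2}$ gives the claim. A small constant such as $C_{\curln}=\sqrt{5}$ (or simply $3$) suffices. There is no real obstacle here. The only point needing care is matching the $h$-weights on the boundary pairings and confirming that the traces make sense in $\Lb^2(\Gamma)$ for the regular component, which Section~\ref{sec:regconsis} provides.
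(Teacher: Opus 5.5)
Your proposal is correct and matches the paper, which only remarks that boundedness is straightforward by the Cauchy--Schwarz inequality; pairing $h^{1/2}\|\gammax\curl\ub\|_{\Lb^2(\Gamma)}$ with $h^{-1/2}\|\gammat\vb\|_{\Lb^2(\Gamma)}$ (and vice versa) is exactly what the extra term in $\vertiii{\cdot}_{\curln,h}$ is for. In fact, each of the five components of $\vertiii{(\ub,p)}_h$ and of $\vertiii{(\vb,q)}_h$ appears exactly once in your five products, so the discrete Cauchy--Schwarz step already gives $C_{\curln}=1$.
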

    \begin{lemma}[inf-sup estimate]\label{thm:ccinfsup}
        Suppose that the isolated patch condition in Section~\ref{sec:patch} holds. There exists a constant $\beta_{\curln} > 0$ independent of $h$ such that
        \begin{equation}\label{eq:cc:infsup}
             \sup_{(\vb_h, q_h) \in \Sigmab_h \times P_{h,0}} \frac{\B^{\mathrm{asym}}_{\curln}((\ub_h, p_h), (\vb_h, q_h))}{\vertiii{(\vb_h, q_h)}_{h}} \geq \beta_{\curln} \vertiii{(\ub_h, p_h)}_{h}.
        \end{equation}
    \end{lemma}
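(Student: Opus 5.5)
We prove Lemma~\ref{thm:ccinfsup} by combining the inf-sup estimate of Lemma~\ref{thm:infsupasym} for $\a^{\mathrm{asym}}_{\curln,h}$ with the standard discrete saddle-point argument for the multiplier $p_h$. Since $\vertiii{\cdot}_{\curln,h}$ and $\|\cdot\|_{\curln,h}$ are equivalent on $\Sigmab_h$, it suffices to bound $\|\ub_h\|^2 + |\ub_h|^2_{\curln,h} + \|\grad p_h\|^2$. We do this by constructing, for fixed $(\ub_h,p_h)$, a test pair whose $\vertiii{\cdot}_h$ norm is bounded by $C\vertiii{(\ub_h,p_h)}_h$.

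\textbf{Step 1: control of $|\ub_h|_{\curln,h}$.} Take the test function $\vb_h^{(1)} = \ub_h - \grad r_h + \alpha h^{-1}\Rsfb_h\Pib^\Gamma_h\gammat\ub_h$ from the proof of Lemma~\ref{thm:infsupasym}, together with $q_h^{(1)} = -p_h$. We then examine the pressure terms $(\grad p_h, \vb_h^{(1)}) - (\grad p_h,\ub_h)$, which leave $-(\grad p_h,\grad r_h) + \alpha h^{-1}(\grad p_h,\varphib_h)$.

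This is the main obstacle, since these terms do not cancel. To remove them, modify the choice: use $\vb_h^{(1)}$ as above and $q_h^{(1)} = -p_h + r_h - \alpha h^{-1}\rho_h$. Here $\rho_h\in P_{h,0}$ is the $L^2$-gradient projection of $\varphib_h$, i.e. $(\grad \rho_h,\grad s_h) = (\varphib_h,\grad s_h)$ for all $s_h \in P_{h,0}$. With this choice,
\begin{equation*}
(\grad q_h^{(1)},\ub_h) = -(\grad p_h,\ub_h) + (\grad r_h,\ub_h) - \alpha h^{-1}(\grad\rho_h,\ub_h).
\end{equation*}
This does not obviously cancel either. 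A cleaner route is therefore to first test with $\vb_h=\zerob$, $q_h$ arbitrary. This shows that the term $(\grad q_h,\ub_h)$ gives control of the discrete-gradient component of $\ub_h$, i.e. of $\|\grad r_h\|$ up to $|\ub_h|_{\curln,h}$.

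We then proceed by Young's inequality. Write
\begin{equation*}
\B^{\mathrm{asym}}_{\curln}((\ub_h,p_h),(\vb_h^{(1)},-p_h)) = \a^{\mathrm{asym}}_{\curln,h}(\ub_h,\vb_h^{(1)}) - (\grad p_h,\grad r_h) + \alpha h^{-1}(\grad p_h,\varphib_h).
\end{equation*}
Using $\|\grad r_h\|\le C\|\ub_h\|_{\curln,h}$ and $h^{-1}\|\varphib_h\|\le Ch^{1/2}\|\gammat\ub_h\|_{\Lb^2(\Gamma)}\le C|\ub_h|_{\curln,h}$ (Lemma~\ref{thm:lift}), this gives
\begin{equation*}
\ge C_1|\ub_h|^2_{\curln,h} - \delta\|\grad p_h\|^2 - C_\delta\left(|\ub_h|^2_{\curln,h} + \|\grad r_h\|^2\right).
\end{equation*}
This is still problematic, so the pressure must be handled first, as follows.

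\textbf{Step 2: control of $p_h$ and of the gradient part.} Test with $(\vb_h,q_h) = (\grad p_h, 0)$, which is admissible since $\grad P_{h,0}\subset\Sigmab_{h,0}$ and has $|\grad p_h|_{\curln,h}=0$. Because $\curl\grad p_h = 0$ and $\gammat\grad p_h = \zerob$, this gives $\B^{\mathrm{asym}}_{\curln} = \|\grad p_h\|^2$ exactly. Similarly, test with $(\zerob, r_h)$, where $r_h$ is from Lemma~\ref{thm:discpoincareaux}. This yields $(\grad r_h,\ub_h) = \|\grad r_h\|^2 + (\grad r_h, \ub_h-\grad r_h)$, which is at least $\tfrac12\|\grad r_h\|^2 - C|\ub_h|^2_{\curln,h}$ by Lemma~\ref{thm:discpoincareaux}. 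Together with Lemma~\ref{thm:discpoincareaux}, this controls $\|\ub_h\|\le \|\grad r_h\| + C|\ub_h|_{\curln,h}$.

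\textbf{Step 3: combination.} Form the test pair
\begin{equation*}
(\vb_h,q_h) = (\vb_h^{(1)},-p_h) + \lambda_1(\grad p_h,0) + \lambda_2(\zerob,r_h),
\end{equation*}
and choose $\lambda_1$ large and then $\lambda_2$ appropriately. Young's inequality absorbs the cross terms $(\grad p_h,\grad r_h)$ and $h^{-1}(\grad p_h,\varphib_h)$. A possible circularity is that $\|\grad r_h\|$ is itself bounded by $\|\ub_h\|$. We avoid relying on this circular bound by weighting so that the $\lambda_2$ term supplies $\|\grad r_h\|^2$.

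\textbf{Step 4: norm of the test function.} Finally, bound $\vertiii{(\vb_h,q_h)}_h \le C\vertiii{(\ub_h,p_h)}_h$. This uses the norm equivalence on $\Sigmab_h$, the inverse inequality~\eqref{eq:inv}, the bound~\eqref{eq:bddtestfuncbound}, and $\|\grad r_h\|\le\|\ub_h\|+C|\ub_h|_{\curln,h}$. Dividing the lower bound from Step 3 by this norm bound concludes~\eqref{eq:cc:infsup}.
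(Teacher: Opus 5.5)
Your final argument (Steps 2--4, once the abandoned attempts in Step 1 are discarded) is correct and is essentially the paper's proof. Like the paper, you combine the test function of Lemma~\ref{thm:infsupasym}, a large multiple of $(\grad p_h,0)$, and a suitably small multiple of $(\zerob,r_h)$ with $r_h$ from Lemma~\ref{thm:discpoincareaux}, absorbing the cross terms by Young's inequality. The one deviation, the extra $-p_h$ in the $q$-slot, is unnecessary: the paper tests with $(\widetilde{\ub}_h,0)$ and bounds $(\grad p_h,\widetilde{\ub}_h)$ directly because $\|\widetilde{\ub}_h\|\le C_2|\ub_h|_{\curln,h}$, whereas your cancellation leaves $-(\grad p_h,\grad r_h)$, which the seminorm does not control and which is what forces the $\lambda_2$-term to supply $\|\grad r_h\|^2$.
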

    \begin{proof}
        We need to pick candidate test functions given $(\ub_h, p_h) \in \Sigmab_h \times P_{h,0}$. By Lemma~\eqref{thm:infsupasym}, there exists $\widetilde{\ub}_h \in \Sigmab_h$ such that $\a^{\mathrm{asym}}_{\curln,h}(\ub_h, \widetilde{\ub}_h) \geq C_1 |\ub_h|^2_{\curln,h}$ and $\|\widetilde{\ub}_h\|_{\curln,h} \leq C_2 |\ub_h|_{\curln,h}$. By Lemma~\ref{thm:discpoincareaux}, there exists $r_h \in P_{h,0}$ such that $\|\ub_h - \grad r_h\| \leq C_3 |\ub_h|_{\curln,h}$.
         Perform the following calculation:
        \begin{equation}\label{eq:cc:calc}
            \begin{aligned}
                \B^{\mathrm{asym}}_{\curln}((\ub_h, p_h), &(\widetilde{\ub}_h,0)) &&\geq C_1 |\ub_h|^2_{\curln,h} + (\grad p_h, \widetilde{\ub}_h) \\
                & &&\geq C_1 |\ub_h|^2_{\curln,h} - \frac{C_2^2}{2C_1}\|\grad p_h\|^2 - \frac{C_1}{2C^2_2} \|\widetilde{\ub}_h\|^2 \\
                & &&\geq \frac{C_1}{2}|\ub_h|^2_{\curln,h} - \frac{C_2^2}{2C_1}\|\grad p_h\|^2, \\
                \B^{\mathrm{asym}}_{\curln}((\ub_h, p_h), &(\grad p_h, 0)) &&= \a^\mathrm{asym}_{\curln,h}(\ub_h, \grad p_h) + \|\grad p_h\|^2  = \|\grad p_h\|^2 \\
                \B^{\mathrm{asym}}_{\curln}((\ub_h, p_h), &(\zerob, r_h)) &&= (\grad r_h, \ub_h)  \\
                & &&= (\ub_h, \ub_h) + (\grad r_h - \ub_h, \ub_h)  \\
                & &&\geq \frac{1}{2}\|\ub_h\|^2 - \frac{1}{2}\|\grad r_h - \ub_h\|^2 \\
                & &&\geq \frac{1}{2}\|\ub_h\|^2 - \frac{C_3^2}{2} |\ub_h|^2_{\curln,h}.
            \end{aligned}
        \end{equation}
        Choose $\vb_h = \widetilde{\ub}_h + \delta_1 \grad p_h$ and $q_h = \delta_2 r_h$ where $\delta_1, \delta_2 > 0$ are constants to be determined. Combining the estimates in~\eqref{eq:cc:calc}, we obtain
        \begin{equation}
            \begin{aligned}
                \B^{\mathrm{asym}}_{\curln}((\ub_h, p_h), (\vb_h, q_h)) 
                &\geq \left(\frac{C_1}{2} - \frac{C_3^2}{2}\delta_2\right)|\ub_h|^2_{\curln,h} + \left(\delta_1 - \frac{C_2^2}{2C_1}\right)\|\grad p_h\|^2 + \frac{\delta_2}{2}\|\ub_h\|^2.
            \end{aligned}
        \end{equation}
        Choose $\delta_1 > \frac{C_2^2}{2C_1}$ and $\delta_2 < \frac{C_1}{C_3^2}$ and we obtain
        \begin{equation}
            \B^{\mathrm{asym}}_{\curln}((\ub_h, p_h), (\vb_h, q_h)) 
                \geq C_4 \|(\ub_h, p_h)\|^2_{h}.
        \end{equation}
        It remains to show that $\|(\vb_h, q_h)\|_{h} \leq C_5 \|(\ub_h, p_h)\|_{h}$, which is straightforward by the triangle inequality and the above estimates. The conclusion follows from the norm equivalence of the $\vertiii{\cdot}_{\curln,h}$ and $\|\cdot\|_{\curln,h}$ norms.
    \end{proof}
    \subsection{A priori error estimates}\label{sec:cc:err}
    The following quasi-optimal error estimate of the discretization~\eqref{eq:ccasym} in the $\vertiii{\cdot}_{\curln,h}$-norm follows immediately from the inf-sup stability~\eqref{eq:cc:infsup} and the boundedness~\eqref{eq:cc:bddness}. 
    \begin{theorem}[quasi-optimality in $\vertiii{\cdot}_{\curln,h}$]\label{thm:cc:quasiopt}
        Suppose that the isolated patch condition in Section~\ref{sec:patch} holds. Let $(\ub, p) \in \Hzcurl\times H^1_0$ and $(\ub_h, p_h) \in \Sigmab_h \times P_{h,0}$ solve~\eqref{eq:cc} and~\eqref{eq:ccasym}, respectively. Suppose $\ub \in \Hb^{1/2+\epsilon}(\curln)$. There exists a constant $C > 0$ independent of $h$ such that
        \begin{equation}\label{eq:cc:quasiopt}
            \vertiii{\ub - \ub_h}_{\curln, h}\leq  C\inf_{\ub_I \in \Sigmab_h}\vertiii{\ub - \ub_I}_{\curln, h}.
        \end{equation}
    \end{theorem}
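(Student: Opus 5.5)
This is the standard Strang/Céa-type argument, combining the inf-sup estimate of Lemma~\ref{thm:ccinfsup} with the boundedness of Lemma~\ref{thm:ccboundedness} and Galerkin orthogonality. Fix an arbitrary $\ub_I \in \Sigmab_h$ and split
\[
\ub - \ub_h = (\ub - \ub_I) + (\ub_I - \ub_h).
\]
By the triangle inequality it suffices to bound $\vertiii{\ub_I - \ub_h}_{\curln,h}$ by $C\vertiii{\ub - \ub_I}_{\curln,h}$.

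\textbf{Galerkin orthogonality.} The exact pressure is $p=0$. Subtracting \eqref{eq:ccasym} from the consistency relation \eqref{eq:cc:consis} gives
\[
\B^{\mathrm{asym}}_{\curln}((\ub - \ub_h, -p_h),(\vb_h,q_h)) = 0 \quad \forall\,(\vb_h,q_h)\in\Sigmab_h\times P_{h,0}.
\]
This step uses the regularity $\ub\in\Hb^{1/2+\epsilon}(\curln)$ so that the boundary terms are well defined (Section~\ref{sec:regconsis}).

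\textbf{Discrete error.} Apply Lemma~\ref{thm:ccinfsup} to the discrete pair $(\ub_I-\ub_h, -p_h)\in \Sigmab_h\times P_{h,0}$. For the test pair $(\vb_h,q_h)$ attaining the supremum up to a constant, orthogonality gives
\[
\beta_{\curln}\vertiii{(\ub_I-\ub_h,-p_h)}_h\,\vertiii{(\vb_h,q_h)}_h \le \B^{\mathrm{asym}}_{\curln}((\ub_I-\ub,0),(\vb_h,q_h)).
\]
Lemma~\ref{thm:ccboundedness} bounds the right-hand side by $C_{\curln}\vertiii{\ub-\ub_I}_{\curln,h}\vertiii{(\vb_h,q_h)}_h$. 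Dividing through yields
\[
\vertiii{\ub_I-\ub_h}_{\curln,h} \le \vertiii{(\ub_I-\ub_h,-p_h)}_h \le \frac{C_{\curln}}{\beta_{\curln}}\vertiii{\ub-\ub_I}_{\curln,h}.
\]
As a by-product, this also controls $\|\grad p_h\|$.

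\textbf{Conclusion.} Combining with the triangle inequality gives
\[
\vertiii{\ub-\ub_h}_{\curln,h} \le \left(1+\frac{C_{\curln}}{\beta_{\curln}}\right)\vertiii{\ub-\ub_I}_{\curln,h},
\]
and taking the infimum over $\ub_I$ gives \eqref{eq:cc:quasiopt}.

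\textbf{Main subtlety.} The delicate point is that boundedness must hold in the strengthened norm $\vertiii{\cdot}_{\curln,h}$, with the extra term $h\|\gammax\curl\cdot\|^2_{\Lb^2(\Gamma)}$, for the non-discrete argument $\ub-\ub_I$. On the discrete side we need that the inf-sup holds in $\vertiii{\cdot}_h$, which uses the equivalence of $\vertiii{\cdot}_{\curln,h}$ and $\|\cdot\|_{\curln,h}$ on $\Sigmab_h$ noted above. Both are already provided by Lemmas~\ref{thm:ccboundedness} and~\ref{thm:ccinfsup}, so no new estimate is needed.
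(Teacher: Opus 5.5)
Your proposal is correct and follows essentially the same argument as the paper: Galerkin orthogonality from the consistency relation, the discrete inf-sup estimate of Lemma~\ref{thm:ccinfsup} applied to the discrete error, boundedness from Lemma~\ref{thm:ccboundedness} for the non-discrete argument $\ub-\ub_I$, and the triangle inequality. The only cosmetic difference is that the paper first notes $p = p_h = 0$ (obtained by testing with $\grad p_h$ and using $\div\fb = 0$), whereas you carry $-p_h$ through the estimate, which works equally well.
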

    \begin{proof}
        First notice that $p = p_h = 0$.
        Take arbitrary $(\ub_I, p_I) \in \Sigmab_h \times P_{h,0}$. By the inf-sup stability~\eqref{eq:cc:infsup}, the consistency~\eqref{eq:cc:consis}, and the boundedness~\eqref{eq:cc:bddness}, we have
        \begin{equation}
            \begin{aligned}
                \beta_{\curln}\vertiii{(\ub_h - \ub_I, p_h - p_I)}_{h} &\leq \sup_{(\vb_h, q_h) \in \Sigmab_h \times P_{h,0}} \frac{\B^{\mathrm{asym}}_{\curln}((\ub_h - \ub_I, p_h - p_I), (\vb_h, q_h))}{\|(\vb_h, q_h)\|_{h}}\\
                &\leq  \sup_{(\vb_h, q_h) \in \Sigmab_h \times P_{h,0}} \frac{\B^{\mathrm{asym}}_{\curln}((\ub - \ub_I, p - p_I), (\vb_h, q_h))}{\|(\vb_h, q_h)\|_{h}} \\
                &\leq  C_{\curln}\vertiii{(\ub - \ub_I, p - p_I)}_{h}.
            \end{aligned}
        \end{equation}
        Applying the triangle inequality and taking $p_I = 0$, we can conclude.
    \end{proof}
    
    The next lemma states the interpolation error in the $\vertiii{\cdot}_{\curln,h}$-norm using $\INDav$ introduced in Section~\ref{sec:IND}. Due to the presence of $\Lb^2$ boundary term in the $\vertiii{\cdot}_{\curln,h}$-norm, the approximation rates differ in the case of homogeneous and non-homogeneous tangential BCs. 
    \begin{lemma}[interpolation errors]\label{thm:ccapprox}
        Suppose $\ub \in \Hb^{k+1}(\curln)$. There exists a constant $C > 0$ independent of $h$ such that
        \begin{equation}\label{eq:cc:approxnonhomo}
            \vertiii{\ub - \INDav \ub}_{\curln, h} \leq Ch^{k}|\ub|_{\Hb^{k+1}} + Ch^{k+1}|\curl\ub|_{\Hb^{k+1}}.
        \end{equation}
        If further $\gammat \ub = \zerob$, then
        \begin{equation}\label{eq:cc:approxhom}
            \vertiii{\ub - \INDav \ub}_{\curln, h} \leq Ch^{k+1}\left(|\ub|_{\Hb^{k+1}} + |\curl\ub|_{\Hb^{k+1}}\right).
        \end{equation}
    \end{lemma}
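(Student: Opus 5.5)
We bound the four contributions to $\vertiii{\ub - \INDav\ub}_{\curln,h}^2$ separately, writing $\eb := \ub - \INDav\ub$:
\[
\vertiii{\eb}_{\curln,h}^2 = \|\eb\|^2 + \|\curl\eb\|^2 + h^{-1}\|\gammat\eb\|^2_{\Lb^2(\Gamma)} + h\|\gammax\curl\eb\|^2_{\Lb^2(\Gamma)}.
\]
The two volume terms follow directly from the approximation properties~\eqref{eq:approxIND} with $m=0$ and $r=k+1$. This gives $\|\eb\| \leq Ch^{k+1}|\ub|_{\Hb^{k+1}}$ and $\|\curl\eb\| \leq Ch^{k+1}|\curl\ub|_{\Hb^{k+1}}$, which are already of the order required in both~\eqref{eq:cc:approxnonhomo} and~\eqref{eq:cc:approxhom}.

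For the two boundary terms, we use that $|\gammat\vb| \leq |\vb|$ and $|\gammax\vb|\leq|\vb|$ pointwise. We then sum the trace approximation estimates~\eqref{eq:traceapproxIND} over the boundary faces $F\in\Fcal_h^\Gamma$ only, with $r=k+1$:
\[
h\|\gammax\curl\eb\|^2_{\Lb^2(\Gamma)} \leq Ch\cdot h^{2k+1}|\curl\ub|^2_{\Hb^{k+1}} = Ch^{2k+2}|\curl\ub|^2_{\Hb^{k+1}},
\qquad
h^{-1}\|\gammat\eb\|^2_{\Lb^2(\Gamma)} \leq Ch^{-1}h^{2k+1}|\ub|^2_{\Hb^{k+1}} = Ch^{2k}|\ub|^2_{\Hb^{k+1}}.
\]
The second bound is the only term of order $h^k$. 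Combining the four bounds and taking square roots yields~\eqref{eq:cc:approxnonhomo}.

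The homogeneous case $\gammat\ub=\zerob$ is the main step, because here the tangential trace term must gain an extra half power of $h$. Since $\INDav$ preserves homogeneous boundary conditions, $\INDav\ub\in\Sigmab_{h,0}$. Hence $\gammat\eb = \gammat\ub - \gammat\INDav\ub = \zerob$ on $\Gamma$, and the term $h^{-1}\|\gammat\eb\|^2_{\Lb^2(\Gamma)}$ vanishes identically. The remaining three terms are all $O(h^{2k+2})$ by the bounds above, which gives~\eqref{eq:cc:approxhom}.

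We need $k+1\geq 1$ so that $r=k+1$ lies in the admissible range $[1,k+1]$ of~\eqref{eq:traceapproxIND}; this always holds. We also note that~\eqref{eq:traceapproxIND} is stated as a sum over all faces, and the sum over boundary faces is bounded by it. No genuine obstacle is expected beyond verifying the boundary-condition-preserving property of $\INDav$.
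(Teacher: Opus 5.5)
Your proposal is correct and matches the paper's proof, which just states that both estimates follow from \eqref{eq:approxIND} and \eqref{eq:traceapproxIND} together with the fact that $\INDav$ preserves homogeneous tangential boundary conditions. You have written out the same term-by-term bookkeeping, including the observation that $h^{-1}\|\gammat(\ub-\INDav\ub)\|^2_{\Lb^2(\Gamma)}$ is the only term limited to $O(h^{2k})$ and that it vanishes identically when $\gammat\ub=\zerob$.
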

    \begin{proof}
        Note that $\INDav$ preserves homogeneous tangential BCs. The results are direct consequences of~\eqref{eq:approxIND} and~\eqref{eq:traceapproxIND}. 
    \end{proof}
    \begin{corollary}[a priori error estimate]\label{thm:cc:err}
        Suppose that the isolated patch condition in Section~\ref{sec:patch} holds. Let $(\ub, p) \in \Hzcurl\times H^1_0$ and $(\ub_h, p_h) \in \Sigmab_h \times P_{h,0}$ solve~\eqref{eq:cc} and~\eqref{eq:ccasym}, respectively. Assume $\ub \in \Hb^{k+1}(\curln)$. There exists a constant $C > 0$ independent of $h$ such that
        \begin{equation}
            \vertiii{\ub - \ub_h}_{\curln, h}\leq Ch^{k+1}\left(|\ub|_{\Hb^{k+1}} + |\curl\ub|_{\Hb^{k+1}}\right).
        \end{equation}
    \end{corollary}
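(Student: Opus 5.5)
The corollary follows by combining the quasi-optimality estimate of Theorem~\ref{thm:cc:quasiopt} with the interpolation estimate of Lemma~\ref{thm:ccapprox}. The plan is to bound the infimum in~\eqref{eq:cc:quasiopt} by the specific choice $\ub_I = \INDav \ub \in \Sigmab_h$. This yields
\begin{equation*}
\vertiii{\ub - \ub_h}_{\curln,h} \leq C \vertiii{\ub - \INDav\ub}_{\curln,h}.
\end{equation*}

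Next I check the hypotheses. Since $k \geq 0$, we have $\ub \in \Hb^{k+1}(\curln) \subset \Hb^{1/2+\epsilon}(\curln)$, so Theorem~\ref{thm:cc:quasiopt} applies under the isolated patch condition. The exact solution satisfies $\ub \in \Hzcurl$, i.e.\ $\gammat \ub = \zerob$, so the homogeneous estimate~\eqref{eq:cc:approxhom} is available rather than the suboptimal~\eqref{eq:cc:approxnonhomo}.

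The only point requiring care is that the $h^{-1}\|\gammat(\ub - \INDav\ub)\|^2_{\Lb^2(\Gamma)}$ part of the norm does not lose half an order. This is handled by the boundary-condition-preserving property of $\INDav$. Since $\gammat\ub = \zerob$, we have $\INDav\ub \in \Sigmab_{h,0}$, so $\gammat(\ub - \INDav\ub) = \zerob$ and this term vanishes identically. The remaining terms are controlled as follows. The $\Lb^2$ and curl parts are bounded by $Ch^{k+1}$ via~\eqref{eq:approxIND} with $m=0$, $r=k+1$. The term $h\|\gammax\curl(\ub - \INDav\ub)\|^2_{\Lb^2(\Gamma)}$ is bounded by $h\cdot h^{2k+1}|\curl\ub|^2_{\Hb^{k+1}}$ via~\eqref{eq:traceapproxIND}, which again gives order $h^{k+1}$.

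Substituting~\eqref{eq:cc:approxhom} into the quasi-optimality bound then gives the claimed $Ch^{k+1}(|\ub|_{\Hb^{k+1}} + |\curl\ub|_{\Hb^{k+1}})$.
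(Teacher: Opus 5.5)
Your proposal is correct and follows the paper's proof, which simply combines the quasi-optimality bound~\eqref{eq:cc:quasiopt} with the homogeneous interpolation estimate~\eqref{eq:cc:approxhom} at $\ub_I = \INDav\ub$. Your term-by-term check is exactly the content of Lemma~\ref{thm:ccapprox}: the vanishing tangential-trace term, the $\Lb^2$ and curl terms, and the $h\|\gammax\curl(\ub-\INDav\ub)\|^2_{\Lb^2(\Gamma)}$ term.
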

    \begin{proof}
        The estimate follows from~\eqref{eq:cc:quasiopt} and~\eqref{eq:cc:approxhom}.
    \end{proof}
    \subsection{Non-homogeneous tangential BCs}\label{sec:cc:nonhomo}
    For the sake of presentation, we have focused on the case of homogeneous tangential BCs $\gammat \ub = \zerob$ in the above analysis. In this section, we briefly discuss the subtlety in the case of non-homogeneous tangential BCs $\gammat \ub = \gammat \gb $ where $\gb \in \Hcurl$.

    To accommodate $\gammat\gb \neq \zerob$, the most straightforward approach is to seek $(\ub_h, p_h) \in \Sigmab_h \times P_{h,0}$ such that 
    \begin{equation}\label{eq:ccasymnonhomo}
        \B^{\mathrm{asym}}_{\curln}((\ub_h, p_h), (\vb_h, q_h)) = (\fb, \vb_h) + \langle \gammax \curl \vb_h, \gammat\gb\rangle \quad \forall\,(\vb_h, q_h) \in \Sigmab_{h} \times P_{h,0},
    \end{equation}
    which is consistent with the exact solution. The quasi-optimality as stated in Theorem~\ref{thm:cc:quasiopt} still holds, but the discretization error in the $\vertiii{\cdot}_{\curln,h}$-norm is only of order $h^k$ for a smooth $\ub$ in view of~\eqref{eq:cc:approxnonhomo}. This implies suboptimal convergence in $\Hcurl$-norm, which is confirmed by numerical experiments (see Figure~\ref{fig:cc:convergence}). 
    
    This suboptimality is due to the incomplete/trimmed polynomial spaces of $\Sigmab_h$, the \Nedelec elements of the first kind. Similar issue has been discussed in the context of $\Hcurl$-interface problems using Nitsche's method~\cite{casagrande_2016}.

    A remedy to restore optimal convergence in $\Hcurl$-norm is to replace the original boundary data $\gammat\gb$ with interpolated boundary data $\gammat \INDav \gb$ (see definition in Section~\ref{sec:IND}). That is, seek $(\ub_h, p_h) \in \Sigmab_h \times P_{h,0}$ such that
    \begin{equation}\label{eq:ccasymnonhomo2}
        \B^{\mathrm{asym}}_{\curln}((\ub_h, p_h), (\vb_h, q_h)) = (\fb, \vb_h) + \langle \gammax \curl \vb_h, \gammat \INDav \gb\rangle \quad \forall\,(\vb_h, q_h) \in \Sigmab_{h} \times P_{h,0}.
    \end{equation}
    In contrast to~\eqref{eq:ccasymnonhomo}, this scheme is slightly inconsistent, but the inconsistency error can be controlled well. By solving~\eqref{eq:ccasymnonhomo2}, optimal convergence in $\Hcurl$-norm can be achieved. The proof is straightforward based on the previous analysis and thus omitted for brevity. Additionally, we point out that this trick is in some sense equivalent to first reformulating the continuous problem into a form with homogeneous tangential BCs and then applying the scheme~\eqref{eq:ccasym}.

\section{The magnetic advection-diffusion problem}\label{sec:mad}
In this section, we consider the \emph{magnetic advection-diffusion} boundary value problem~\cite{heumann_2013,li_2026}:
\begin{equation}\label{eq:mad}
        \begin{aligned}
        \eps \curl\curl\ub + \L_\betab \ub + \alpha \ub &= \fb && \text{in }\Omega,\\
        \gammat\ub &= \zerob && \text{on }\Gamma\setminus\Gamma^-, \\
        \ub &= \zerob && \text{on }\Gamma^-,
        \end{aligned}
    \end{equation}
where 
\begin{itemize}
    \item $\L_\betab \ub : = \curl\ub \times \betab + \grad (\ub\cdot\betab)$ is the Lie derivative (for differential 1-form);
    \item $\Gamma^- := \{x\in\Gamma: \betab(x)\cdot\nb(x) < 0\}$ is the inflow boundary;
    \item $\betab \in \Wb^{1,\infty}$ is a given advection field;
    \item $\alpha \in L^\infty$ is a given reaction coefficient;
    \item $\eps > 0$ is a given diffusion coefficient.
\end{itemize}
Equations of the form~\eqref{eq:mad} arise in magnetohydrodynamics (MHD) and govern the evolution of magnetic field in a conducting fluid (see, e.g.~\cite{spruit_2016}).
In physical scenarios, $\eps$ represents the magnetic diffusivity and is often very small. This renders the problem singularly perturbed and the solutions may exhibit boundary layers. Formally, the limit problem as $\eps \rightarrow 0$ is the pure magnetic advection boundary value problem:
\begin{equation}\label{eq:magadvec}
    \begin{aligned}
    \L_\betab \ub + \alpha \ub &= \fb && \text{in }\Omega,\\
    \ub &= \zerob && \text{on }\Gamma^-.
    \end{aligned}
\end{equation}

The major difficulty in discretization of~\eqref{eq:mad} is numerical instability in the advection-dominated regime, that is, when $\eps \ll |\betab|$. This issue has been extensively studied in the context of the \emph{scalar advection-diffusion equation}:
\begin{equation}\label{eq:scalardiff}
    \begin{aligned}
    -\eps \Delta u + \betab \cdot \grad u + \alpha u &= f && \text{in }\Omega,\\
    u &= 0 && \text{on }\Gamma.
    \end{aligned}
\end{equation}
where a large variety of stabilized methods have been proposed; see~\cite{roos_2008} for a comprehensive review. In the language of \emph{exterior calculus}, both~\eqref{eq:mad} and~\eqref{eq:scalardiff} are instances of a linear advection-diffusion problem for differential $0$-form and $1$-form, respectively~\cite{heumann_2016}. Taking a cue from the stabilization techniques for~\eqref{eq:scalardiff}, many methods have been proposed for~\eqref{eq:mad}, including DG methods~\cite{heumann_2013,wang_2024}, streamline upwind Petrov-Galerkin (SUPG) methods~\cite{li_2026}, exponential fitting methods~\cite{wang_2023}, local projection stabilization (LPS) methods~\cite{luo_2025}, etc.

Well-posedness of~\eqref{eq:mad} and~\eqref{eq:magadvec} is guaranteed by the following assumption~\cite[][Assumption~1.3]{heumann_2013}:
\begin{equation}\label{ass:mad}
    \lambda_{\min}\{(2\alpha - \div \betab)\Ib + \Dsf \betab + (\Dsf \betab)^\top\} \geq \alpha_0 > 0 \text{ a.e. in }\Omega
\end{equation}
where $\Dsf \betab$ is the Jacobian matrix of $\betab$ and $\lambda_{\min}$ is the smallest eigenvalue of the corresponding matrix. We assume~\eqref{ass:mad} throughout this section.

In what follows, we analyze a discretization of~\eqref{eq:mad} combining the penalty-free asymmetric Nitsche's method~\eqref{eq:blasym} for the diffusion term $\curl\curl\ub$ and the SUPG stabilization proposed in~\cite{li_2026} for the advection term $\L_\betab \ub$. Due to the weak imposition of the tangential BCs, the redundant conditions in the $\eps\rightarrow 0$ limit on $\Gamma \setminus \Gamma^-$ (see~\eqref{eq:mad} and~\eqref{eq:magadvec}) are discarded naturally at the discrete level. Benefits of weak imposition of Dirichlet BCs in flow problems have been widely discussed~\cite{burman_2006b,bazilevs_2007,schieweck_2008}. 

\subsection{Discretization}
We first introduce some notations. Denote the discrete operator $\Asf_h: \Sigmab_h \rightarrow \Lb^2$ piecewise as
\begin{equation}
    \Asf_h \ub := \eps \curl \curl \ub + \L_\betab \ub + \alpha \ub \text{ on each } T \in \mathcal{T}_h,
\end{equation}
and the discrete Lie-derivative $\L_{\betab,h}$ as
\begin{equation}
    \L_{\betab,h} \ub := \curl \ub \times \betab + \grad (\ub\cdot\betab) \text{ on each } T \in \mathcal{T}_h.
\end{equation}
Recall the definition of sets of mesh entities~\eqref{def:meshnotations}. Additionally, we assume that each boundary face $F \in \Fcal_h^\Gamma$ lies entirely on $\Gamma^-$ or $\Gamma\setminus\Gamma^-$. Denote the set of faces on the inflow boundary $\Gamma^-$ as
\begin{equation}
    \Fcal^-_h := \{F \in \Fcal_h^\Gamma: F \subset \Gamma^- \}.
\end{equation}
For each interior face $F \in \Fcal_h^\circ$, assign a fixed unit normal vector $\nb_F$ and let $T^+_F$ and $T^-_F$ be the two elements sharing $F$ such that $\nb_F$ points from $T^+_F$ to $T^-_F$. For each boundary face $F \in \Fcal_h^\Gamma$, assign a fixed unit normal vector $\nb_F$ pointing outward of $\Omega$. For a piecewise smooth function $\vb$, let $\vb^\pm_F$ be its full trace on $F$ from $T^\pm_F$. Next, define the jump and average operators as 
\begin{equation}
    \begin{aligned}
        \jump{\vb}_F &:= \vb^+_F - \vb^-_F, \quad &&\avg{\vb}_F := \frac{1}{2}(\vb^+_F + \vb^-_F) &&\text{ for } F \in \Fcal_h^\circ,\\
        \jump{\vb}_F &:= \vb, \quad &&\avg{\vb}_F := \vb &&\text{ for } F \in \Fcal_h^\Gamma.
    \end{aligned}
\end{equation}

Recall that $\a_{\curln}(\ub,\vb) := (\curl\ub,\curl\vb)$ and introduce the following SUPG-stabilized bilinear form for the advection-diffusion term $\L_\betab \ub + \alpha \ub$:
\begin{equation}\label{eq:mag:stabterm}
    \begin{aligned}
    \a_{\mathrm{adv},h}^{\mathrm{SUPG}}(\ub_h, \vb_h) &:= (\L_{\betab,h} \ub_h, \vb_h) + (\alpha \ub_h, \vb_h) - \sum_{F \in \Fcal^\circ_h \cup \Fcal^-_h} \langle \betab\cdot \nb_F \jump{\ub_h}, \avg{\vb_h}\rangle_F \\
        &\quad\quad + \underbrace{\sum_{F \in \Fcal^\circ_h} \left\langle \frac{1}{2}|\betab \cdot \nb_F| \jump{\ub_h}, \jump{\vb_h}\right\rangle_F}_{S^1} + \underbrace{\sum_{T\in\Tcal_h}\delta_T \left(\Asf_h \ub_h, \L_{\betab,h} \vb_h \right)_T}_{S^2}.
    \end{aligned}
\end{equation}

The SUPG scheme~\cite[][Eq.~3.10]{li_2026} for~\eqref{eq:mad} with \emph{tangential} BCs strongly imposed on $\Gamma$ seeks $\ub_h \in \Sigmab_{h,0}$ such that
\begin{equation}\label{eq:supgschemestrongbc}
    \Bsf_{\mathrm{mad},h}(\ub_h, \vb_h) := \eps \a_{\curln}(\ub_h, \vb_h) + \a_{\mathrm{adv},h}^{\mathrm{SUPG}}(\ub_h, \vb_h) = \lsf(\fb, \vb_h) \quad \forall\,\vb_h \in \Sigmab_{h,0},
\end{equation}
where
\begin{equation}
    \lsf(\fb, \vb_h) := (\fb, \vb_h) + \sum_{T\in\Tcal_h} \delta_T (\fb, \L_{\betab,h} \vb_h)_T.
\end{equation}
Here, $\delta_{T} > 0, T \in \Tcal_h$ is a stabilization parameter per element to be determined.

We consider the following scheme: seek $\ub_h \in \Sigmab_h$ such that
\begin{equation}\label{eq:supgscheme}
    \Bsf_{\mathrm{mad},h}^{\mathrm{asym}}(\ub_h, \vb_h):= \eps \a^{\mathrm{asym}}_{\curln,h}(\ub_h, \vb_h) + \a_{\mathrm{adv},h}^{\mathrm{SUPG}}(\ub_h, \vb_h) = \lsf(\fb, \vb_h) \quad \forall\,\vb_h \in \Sigmab_h.
\end{equation}
By comparing the advection-diffusion equation~\eqref{eq:mad} and its $\eps\rightarrow 0$ limit~\eqref{eq:magadvec}, one sees that the tangential BCs on the outflow boundary $\Gamma \setminus \Gamma^-$ are dropped in the limit. This implies that a sharp boundary layer of $\gammat \ub$ may develop near $\Gamma \setminus \Gamma^-$ (see Figure~\ref{fig:mad:boundarylayer}). When $\eps = 0$, the scheme~\eqref{eq:supgscheme} does not impose any tangential BCs on $\Gamma \setminus \Gamma^-$, which mimics the limiting behavior of the continuous level as $\eps \rightarrow 0$. In the numerical experiments in Section~\ref{sec:num:mad}, it indeed leads to better performance in the advection-dominated regime ($\eps \ll h$) than the scheme~\eqref{eq:supgschemestrongbc} with strongly imposed tangential BCs.

\begin{remark}[non-homogeneous BCs]\label{rmk:mad:nonhomo}
    To accommodate non-homogeneous tangential BCs $\gammat \ub = \gammat \gb$ on $\Gamma\setminus\Gamma^-$ and $ \ub = \gb$ on $\Gamma^-$, the right-hand side of~\eqref{eq:supgscheme} should be modified:
    \begin{equation}\label{eq:mad:nonhomo}
        \Bsf_{\mathrm{mad},h}^{\mathrm{asym}}(\ub_h, \vb_h) = \lsf(\fb, \vb_h) + \eps\langle \gammax \curl \vb_h, \gammat\gb\rangle - \sum_{F \in \Fcal^-_h} \langle \betab\cdot \nb_F\, \gb, \vb_h\rangle_F.
    \end{equation}
\end{remark}

\begin{remark}[comparison with the original scheme of~\cite{li_2026}]
     For the sake of simplicity, we have made a slight modification to the original SUPG scheme in~\cite{li_2026}. Originally, the discrete operator $\Asf_h$ accommodates the $S^1$ term while the one here does not. This makes the current SUPG term $S^2$ slightly weaker than the original one, but still sufficient for stability and convergence\footnote{We are grateful to Jindong Wang for the helpful discussions on this aspect.}. Additionally, the \Nedelec elements of \emph{the second kind} are used in~\cite{li_2026} while \emph{the first kind} is used in this work; see discussion in Remark~\ref{rmk:type2fail}.
\end{remark}

\subsubsection{Consistency and coercivity}
Additional regularity of the exact solution $\ub$ of~\eqref{eq:mad} is required for consistency of the scheme~\eqref{eq:supgscheme}. In particular, $\Bsf_{\mathrm{mad},h}^{\mathrm{asym}}(\ub, \vb_h)$ must be well-defined for $\vb_h \in \Sigmab_h$. A sufficient regularity condition is $\ub\in\Hb^1(\curln)$.
It is straightforward to verify that the scheme~\eqref{eq:supgscheme} is consistent with the original problem~\eqref{eq:mad}. Indeed, if $\ub\in \Hb^1(\curln)$ solves~\eqref{eq:mad}, then
\begin{equation}\label{eq:mag:consissd}
    \begin{aligned}
        \Bsf_{\mathrm{mad},h}^{\mathrm{asym}}(\ub, \vb_h) = \lsf(\fb, \vb_h) \quad \forall\,\vb_h \in \Sigmab_h.
    \end{aligned}
\end{equation}

Define the energy norm
\begin{equation}\label{eq:enormsupg}
        \begin{aligned}
            \|\ub_h\|^2_{\mathrm{mad},h} &:= \eps\|\curl \ub_h\|^2 + \|\ub_h\|^2 + \sum_{F\in \Fcal_h} \left\||\betab\cdot\nb_F|^{1/2}\jump{\ub_h}\right\|^2_F + \sum_{T \in \Tcal_h}\delta_T \|\L_{\betab,h} \ub_h\|_{\Lb^2(T)}^2.
        \end{aligned}
\end{equation}
\begin{lemma}[discrete coercivity]\label{thm:coersupg}
    Suppose the stabilization parameter $\delta_T$ satisfies
    \begin{equation}
        \delta_T \leq \min\left\{\frac{h_T^2}{2C^2_{\mathrm{inv}}\eps}, \frac{\alpha_0}{2\|\alpha\|_{L^\infty(T)}^2}\right\} \quad \forall\,T \in \Tcal_h,
    \end{equation}
    where $C_{\mathrm{inv}}$ is the constant in the inverse inequality~\eqref{eq:inv} and $\alpha_0$ is the constant in the condition~\eqref{ass:mad}.
    Then, there exists a constant $\beta_{\mathrm{mad}} > 0$ such that
    \begin{equation}
        \Bsf_{\mathrm{mad},h}^{\mathrm{asym}}(\ub_h, \ub_h) \geq \beta_{\mathrm{mad}}\|\ub_h\|^2_{\mathrm{mad},h} \quad \forall\,\ub_h \in \Sigmab_h.
    \end{equation}
\end{lemma}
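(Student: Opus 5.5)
I will test the form with $\vb_h=\ub_h$ and treat each of its pieces separately. In the asymmetric diffusion part the boundary terms cancel exactly, because $-\langle \gammax\curl\ub_h,\gammat\ub_h\rangle+\langle\gammax\curl\ub_h,\gammat\ub_h\rangle=0$, so $\eps\a^{\mathrm{asym}}_{\curln,h}(\ub_h,\ub_h)=\eps\|\curl\ub_h\|^2$. This is where the penalty-free asymmetric form earns its keep: coercivity only has to come from the advection part. The analysis of $\a^{\mathrm{SUPG}}_{\mathrm{adv},h}(\ub_h,\ub_h)$ then follows the upwind-DG argument of~\cite{heumann_2013}.

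\textbf{Step 1: advection and reaction terms.} On each element I integrate the Lie-derivative term by parts. The identity I need is
\[
(\L_{\betab}\ub,\ub)_T=\tfrac12\big((\Dsf\betab+(\Dsf\betab)^\top-\div\betab\,\Ib)\ub,\ub\big)_T+\tfrac12\langle\betab\cdot\nb_T,|\ub|^2\rangle_{\partial T}.
\]
Combining it with $(\alpha\ub_h,\ub_h)$ and invoking assumption~\eqref{ass:mad} gives a volume contribution of at least $\tfrac{\alpha_0}{2}\|\ub_h\|^2$.

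I then sum the element boundary terms over faces. Interior faces produce $\tfrac12\langle\betab\cdot\nb_F,\jump{|\ub_h|^2}\rangle_F$, and I use $\jump{|\ub|^2}=2\jump{\ub}\cdot\avg{\ub}$, so these cancel exactly against $-\langle\betab\cdot\nb_F\jump{\ub_h},\avg{\ub_h}\rangle_F$. Boundary faces give $\tfrac12\langle\betab\cdot\nb,|\ub_h|^2\rangle$. On $\Gamma^-$ this combines with the subtracted inflow term to give $\tfrac12\||\betab\cdot\nb|^{1/2}\ub_h\|_F^2$, and on $\Gamma\setminus\Gamma^-$ it is already nonnegative and equal to the same expression. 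Adding $S^1$, which equals $\tfrac12\sum_{F\in\Fcal^\circ_h}\||\betab\cdot\nb_F|^{1/2}\jump{\ub_h}\|_F^2$, recovers half of the face-jump part of the energy norm.

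\textbf{Step 2: the SUPG term (main obstacle).} Write
\[
S^2=\sum_T\delta_T\|\L_{\betab,h}\ub_h\|_T^2+\sum_T\delta_T(\eps\curl\curl\ub_h+\alpha\ub_h,\L_{\betab,h}\ub_h)_T .
\]
The cross terms have to be absorbed, and I would use Young's inequality in the form
\[
\delta_T|(\eps\curl\curl\ub_h,\L_{\betab,h}\ub_h)_T|\le\delta_T\eps^2\|\curl\curl\ub_h\|_T^2+\tfrac14\delta_T\|\L_{\betab,h}\ub_h\|_T^2 .
\]
The inverse inequality~\eqref{eq:inv}, applied to $\curl\ub_h$, together with $\delta_T\le h_T^2/(2C_{\mathrm{inv}}^2\eps)$ bounds the first term by $\tfrac{\eps}{2}\|\curl\ub_h\|_T^2$. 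Likewise
\[
\delta_T|(\alpha\ub_h,\L_{\betab,h}\ub_h)_T|\le\delta_T\|\alpha\|_{L^\infty(T)}^2\|\ub_h\|_T^2+\tfrac14\delta_T\|\L_{\betab,h}\ub_h\|_T^2,
\]
and $\delta_T\le\alpha_0/(2\|\alpha\|^2_{L^\infty(T)})$ makes the first term at most $\tfrac{\alpha_0}{4}\|\ub_h\|_T^2$.

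The delicate part is bookkeeping the constants so that both absorptions leave positive remainders. The one point that needs checking is whether the inverse inequality in the form $\|\curl\curl\vb\|_T\le C_{\mathrm{inv}}h_T^{-1}\|\curl\vb\|_T$ is what~\eqref{eq:inv} provides. It does, up to a harmless constant, since $|\curl\ub_h|_{\Hb^1(T)}$ controls $\curl\curl\ub_h$.

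\textbf{Step 3: collecting.} Adding everything leaves
\[
\tfrac{\eps}{2}\|\curl\ub_h\|^2+\tfrac{\alpha_0}{4}\|\ub_h\|^2+\tfrac12\sum_F\||\betab\cdot\nb_F|^{1/2}\jump{\ub_h}\|_F^2+\tfrac12\sum_T\delta_T\|\L_{\betab,h}\ub_h\|_T^2 .
\]
This yields the claim with $\beta_{\mathrm{mad}}=\min\{\tfrac12,\tfrac{\alpha_0}{4}\}$. Note that neither the isolated patch condition nor the boundary trace norm is needed here, precisely because the asymmetric boundary terms cancel.
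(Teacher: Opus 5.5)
Your route matches the paper's, which omits the proof and defers to the SUPG coercivity argument of Li et al. The asymmetric boundary terms cancel, and your Step 1 is correct: the identity for $(\L_{\betab}\ub,\ub)_T$, the use of \eqref{ass:mad}, the cancellation of interior-face terms via $\jump{|\ub|^2}=2\jump{\ub}\cdot\avg{\ub}$, and the inflow/outflow bookkeeping together give $\eps\|\curl\ub_h\|^2+\frac{\alpha_0}{2}\|\ub_h\|^2+\frac12\sum_{F\in\Fcal_h}\||\betab\cdot\nb_F|^{1/2}\jump{\ub_h}\|_F^2$.

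The constants in Step 2, however, do not close as written. Both cross-term bounds are off by a factor of two, and each lands exactly on the borderline of your split $ab\le a^2+\frac14 b^2$.
\begin{itemize}
\item \emph{Reaction term.} The hypothesis $\delta_T\le\alpha_0/(2\|\alpha\|^2_{L^\infty(T)})$ gives $\delta_T\|\alpha\|^2_{L^\infty(T)}\|\ub_h\|_T^2\le\frac{\alpha_0}{2}\|\ub_h\|_T^2$, not $\frac{\alpha_0}{4}\|\ub_h\|_T^2$. This cross term therefore eats the entire $\frac{\alpha_0}{2}\|\ub_h\|^2$ from Step 1, and Step 3 is left with no $\Lb^2$ control at all.
\item \emph{Diffusion term.} The constant you call harmless is not. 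Applying \eqref{eq:inv} to $\curl\ub_h$, as the hypothesis prescribes, only yields $\|\curl\curl\ub_h\|_T\le\sqrt2\,C_{\mathrm{inv}}h_T^{-1}\|\curl\ub_h\|_T$, because $\|\curl\wb\|_{\Lb^2(T)}\le\sqrt2\,|\wb|_{\Hb^1(T)}$ and this is sharp pointwise for skew-symmetric gradients. With $\delta_T\le h_T^2/(2C_{\mathrm{inv}}^2\eps)$ the term $\delta_T\eps^2\|\curl\curl\ub_h\|_T^2$ is then bounded only by $\eps\|\curl\ub_h\|_T^2$. This swallows the whole diffusion contribution as well.
\end{itemize}

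The fix is to use a weighted Young inequality $ab\le\frac{1}{2\theta}a^2+\frac{\theta}{2}b^2$ with $\frac12<\theta<1$ in both cross terms; take for instance $\theta=\frac34$.
\begin{itemize}
\item The diffusion cross term is then at most $\frac23\eps\|\curl\ub_h\|_T^2+\frac38\delta_T\|\L_{\betab,h}\ub_h\|_T^2$.
\item The reaction cross term is at most $\frac{\alpha_0}{3}\|\ub_h\|_T^2+\frac38\delta_T\|\L_{\betab,h}\ub_h\|_T^2$.
\end{itemize}
Collecting everything, you are left with
\[
\frac{\eps}{3}\|\curl\ub_h\|^2+\frac{\alpha_0}{6}\|\ub_h\|^2+\frac12\sum_{F\in\Fcal_h}\||\betab\cdot\nb_F|^{1/2}\jump{\ub_h}\|_F^2+\frac14\sum_{T\in\Tcal_h}\delta_T\|\L_{\betab,h}\ub_h\|_T^2 .
\]
This gives $\beta_{\mathrm{mad}}=\min\{\frac14,\frac{\alpha_0}{6}\}$. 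With this change the proof is complete. Your closing remark is also right: neither the isolated patch condition nor the boundary $\Lb^2$ norm enters this lemma.
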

\begin{proof}
    The proof is essentially a repetition of the argument in~\cite[][Lemma~5]{li_2026} and thus omitted.
\end{proof}

An additional technicality arises from adopting the penalty-free asymmetric bilinear form $\a^{\mathrm{asym}}_{\curln,h}$. On one hand, an inf-sup stability of $\Bsf_{\mathrm{mad},h}^{\mathrm{asym}}$ that involves the boundary $\Lb^2$-control $\|\gammat\ub_h\|_{\Lb^2(\Gamma)}$ is unclear due to the interference of the advection term. On the other hand, $\Bsf_{\mathrm{mad},h}^{\mathrm{asym}}$ is only coercive with respect to an energy norm~\eqref{eq:enormsupg}, which does not include the boundary $\Lb^2$ norm. 
A special interpolant is constructed to handle this difficulty. This is the goal of the next section.

\subsubsection{A special interpolant}
In this section, we construct an interpolation operator $\boldsymbol{\mathcal{I}}^*_h: \Hb^{1/2+\epsilon}(\curln) \rightarrow \Sigmab_h$ such that 
\begin{itemize}
    \item[1.] \emph{approximation}:
    \begin{equation}\label{eq:mad:approxistar}
        \begin{aligned}
            |\vb - \boldsymbol{\mathcal{I}}^*_h\vb|_{\Hb^m} &\leq  Ch^{r-m} |\vb|_{\Hb^{r}},\\
            |\curl(\vb - \boldsymbol{\mathcal{I}}^*_h\vb)|_{\Hb^m} &\leq  Ch^{r-m} |\curl \vb|_{\Hb^{r}}, 
        \end{aligned}
    \end{equation}
    for $m \in [0,r], r \in [0,k+1]$;
    \item[2.] \emph{boundary orthogonality}:
    \begin{equation}\label{eq:mad:orthistar}
        \left\langle \gammax\curl ( \vb - \boldsymbol{\mathcal{I}}^*_h\vb), \taub^\Gamma_h\right\rangle = 0 \quad \forall\,\taub^\Gamma_h \in \Mb^\Gamma_h.
    \end{equation}
\end{itemize}
Such conditions and the following construction are motivated by~\cite[][Section~7.2]{burman_2012} which addresses scalar advection-diffusion problems with nodal elements. By~\eqref{eq:mad:approxistar} and the trace inequality~\eqref{eq:multtraceineq}, the same interpolation estimates as~\eqref{eq:traceapproxIND} can be deduced for $\boldsymbol{\mathcal{I}}^*_h$ as well.

Recall the interpolation operator $\INDav: \Lb^2 \rightarrow \Sigmab_h$ in~\eqref{sec:IND}, the boundary projection $\Pib^\Gamma_h: \Lb^2(\Gamma) \rightarrow \Mb^\Gamma_h$ in~\eqref{def:PiMb} and the lifting operator $\Rsfb_h:\Mb^\Gamma_h \rightarrow \Sigmab_{h,0}$ in Lemma~\ref{thm:lift}. Thereby, we introduce interpolation operator $\boldsymbol{\mathcal{I}}^*_h$ as:
    \begin{equation}\label{def:ui}
    \boldsymbol{\mathcal{I}}^*_h \vb := \INDav \vb + \Rsfb_h\Pib^\Gamma_h\gammax\curl(\vb - \INDav \vb).
    \end{equation}
    The property~\eqref{eq:bddcontrol} of $\Rsfb_h$ implies~\eqref{eq:mad:orthistar}.
    By the inverse inequality~\eqref{eq:inv}, the stability estimate~\eqref{eq:bddtestfuncbound}, the $\Lb^2$-stability of $\Pib^\Gamma_h$ and the approximation property~\eqref{eq:traceapproxIND}, we deduce
    \begin{equation}
        \begin{aligned}
            |\Rsfb_h\Pib^\Gamma_h\gammax\curl(\vb - \INDav \vb)|_{\Hb^m} &\leq C h^{-m} \|\Rsfb_h\Pib^\Gamma_h\gammax\curl(\vb - \INDav \vb)\| \\
             &\leq C h^{3/2 - m} \|\Pib^\Gamma_h \gammax \curl(\vb - \INDav \vb)\|_{\Lb^2(\Gamma)} \\
            &\leq C h^{3/2 - m} \|\gammax \curl(\vb - \INDav \vb)\|_{\Lb^2(\Gamma)} \\
            &\leq C h^{r - m + 1} |\curl \vb|_{\Hb^r}
        \end{aligned}
    \end{equation}
    for $m\in[0,r],\; r\in[0,k+1]$. This verifies~\eqref{eq:mad:approxistar}. We additionally point out that $\boldsymbol{\mathcal{I}}^*_h$ preserves homogeneous tangential BCs by construction.
\subsection{A priori error estimates}\label{sec:mad:err}
We are now ready to derive a priori error estimates of the scheme~\eqref{eq:supgscheme} for a smooth solution. For simplicity, we use a unified stabilization parameter $\delta$ such that $\delta_T = \delta$ for all $T \in \mathcal{T}_h$.
\begin{theorem}[a priori error estimate]\label{thm:errsupg}
    Suppose that the isolated patch condition in Section~\ref{sec:patch} and the assumption of Lemma~\ref{thm:coersupg} hold. Let $\ub \in \Hb^{k+1}(\curln)$ and $\ub_h \in \Sigmab_h$ solve~\eqref{eq:mad} and~\eqref{eq:supgscheme}, respectively. There exists a constant $C > 0$ independent of $h$ such that
    \begin{equation}\label{eq:mad:err}
        \|\ub - \ub_h\|_{\mathrm{mad},h} \leq C\left(\delta^{-1/2} h + h^{1/2} + \delta^{1/2}\right) h^{k}|\ub|_{\Hb^{k+1}} + C\left(\eps^{1/2}h + \eps\delta^{1/2}\right) h^{k}|\curl\ub|_{\Hb^{k+1}}.
    \end{equation}
\end{theorem}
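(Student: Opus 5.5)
We would follow the standard coercivity-plus-consistency argument, splitting the error with the special interpolant $\boldsymbol{\mathcal{I}}^*_h$ of~\eqref{def:ui}. Write $\ub - \ub_h = \etab + \xi_h$ with $\etab := \ub - \boldsymbol{\mathcal{I}}^*_h\ub$ and $\xi_h := \boldsymbol{\mathcal{I}}^*_h\ub - \ub_h \in \Sigmab_h$. By the triangle inequality it suffices to bound $\|\etab\|_{\mathrm{mad},h}$ and $\|\xi_h\|_{\mathrm{mad},h}$. The term $\|\etab\|_{\mathrm{mad},h}$ follows directly from~\eqref{eq:mad:approxistar}, the trace estimates analogous to~\eqref{eq:traceapproxIND}, and $\|\L_{\betab,h}\etab\|\leq C(|\etab|_{\Hb^1}+\|\etab\|+\|\curl\etab\|)$. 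This gives contributions $\eps^{1/2}h^{k+1}|\curl\ub|_{\Hb^{k+1}}$, $h^{k+1}$, $h^{k+1/2}$ and $\delta^{1/2}h^{k}$ times $|\ub|_{\Hb^{k+1}}$, all of which appear in~\eqref{eq:mad:err}.

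For $\xi_h$ we use Lemma~\ref{thm:coersupg} together with the Galerkin orthogonality obtained from consistency~\eqref{eq:mag:consissd}:
\begin{equation*}
\beta_{\mathrm{mad}}\|\xi_h\|^2_{\mathrm{mad},h} \leq \Bsf^{\mathrm{asym}}_{\mathrm{mad},h}(\xi_h,\xi_h) = -\Bsf^{\mathrm{asym}}_{\mathrm{mad},h}(\etab,\xi_h).
\end{equation*}
We then bound each piece of $\Bsf^{\mathrm{asym}}_{\mathrm{mad},h}(\etab,\xi_h)$ by (approximation error)$\cdot\|\xi_h\|_{\mathrm{mad},h}$.

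The diffusive part is $\eps(\curl\etab,\curl\xi_h) - \eps\langle\gammax\curl\etab,\gammat\xi_h\rangle + \eps\langle\gammax\curl\xi_h,\gammat\etab\rangle$.
\begin{itemize}
\item The volume term is handled by Cauchy--Schwarz and gives $\eps^{1/2}h^{k+1}|\curl\ub|_{\Hb^{k+1}}$.
\item The third term: $\gammat\etab$ vanishes when $\gammat\ub=\zerob$ on $\Gamma\setminus\Gamma^-$, since both $\INDav$ and $\Rsfb_h$ preserve homogeneous BCs. On $\Gamma^-$ the full trace is zero, so this term vanishes as well.
\item The second term is the delicate one, because $\|\xi_h\|_{\mathrm{mad},h}$ contains no boundary $\Lb^2$ control of $\gammat\xi_h$. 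Here the boundary orthogonality~\eqref{eq:mad:orthistar} is the key tool. We split $\gammat\xi_h = \Pib^\Gamma_h\gammat\xi_h + (\gammat\xi_h - \Pib^\Gamma_h\gammat\xi_h)$. The first part is killed by~\eqref{eq:mad:orthistar}. The second part is bounded by the trace Poincar\'e inequality (Lemma~\ref{thm:tracepoincare}) by $Ch^{1/2}\|\curl\xi_h\|$. Hence
\begin{equation*}
\eps|\langle\gammax\curl\etab,\gammat\xi_h\rangle| \leq C\eps h^{1/2}\|\gammax\curl\etab\|_{\Lb^2(\Gamma)}\|\curl\xi_h\| \leq C\eps^{1/2}h^{k+1}|\curl\ub|_{\Hb^{k+1}}\,\eps^{1/2}\|\curl\xi_h\|.
\end{equation*}
\end{itemize}

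The advective part is treated as in the SUPG analysis of~\cite{li_2026}, i.e.\ essentially as in the scalar case. Integrating $(\L_{\betab,h}\etab,\xi_h)$ by parts elementwise moves derivatives onto $\xi_h$. The resulting volume term $(\etab,\L_{\betab,h}\xi_h)$ is bounded by $\delta^{-1/2}\|\etab\|\,\delta^{1/2}\|\L_{\betab,h}\xi_h\|$, which gives the $\delta^{-1/2}h^{k+1}$ contribution. The face terms combine with the jump/average terms and $S^1$ and are bounded by $\||\betab\cdot\nb|^{1/2}\etab\|_F$ (order $h^{k+1/2}$) times the jump seminorm of $\xi_h$. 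The reaction and $\div\betab$, $\Dsf\betab$ terms are $L^2$ terms of order $h^{k+1}$. Finally, $S^2$ is bounded by $\delta^{1/2}\|\Asf_h\etab\|\,\delta^{1/2}\|\L_{\betab,h}\xi_h\|$, with $\|\Asf_h\etab\|\leq C(\eps h^{k}|\curl\ub|_{\Hb^{k+1}} + h^{k}|\ub|_{\Hb^{k+1}})$. This yields the $\eps\delta^{1/2}$ and $\delta^{1/2}$ terms.

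The main obstacle is the outflow boundary term $\eps\langle\gammax\curl\etab,\gammat\xi_h\rangle$. It is exactly what the construction~\eqref{def:ui} and Lemma~\ref{thm:tracepoincare} are designed for, and it requires checking that the orthogonality~\eqref{eq:mad:orthistar} holds on every patch, including those on $\Gamma^-$. A secondary point to verify is the bookkeeping in the integration by parts of the Lie derivative for N\'ed\'elec functions, where only tangential continuity is available, so that the face terms match the jump and upwind terms in~\eqref{eq:mag:stabterm}.
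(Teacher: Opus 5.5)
Your proposal is correct and follows the paper's proof. The paper likewise applies coercivity (Lemma~\ref{thm:coersupg}) together with consistency to the discrete error $\boldsymbol{\mathcal{I}}^*_h\ub-\ub_h$. It kills the term $\eps\langle\gammax\curl\xi_h,\gammat\etab\rangle$ because $\boldsymbol{\mathcal{I}}^*_h$ preserves homogeneous tangential traces. It controls $\eps\langle\gammax\curl\etab,\gammat\xi_h\rangle$ exactly as you do, via the orthogonality~\eqref{eq:mad:orthistar} and Lemma~\ref{thm:tracepoincare}, and it bounds the advective and SUPG terms as in~\cite{li_2026}. Your version is slightly more explicit than the paper's: you spell out the bound on $\|\ub-\boldsymbol{\mathcal{I}}^*_h\ub\|_{\mathrm{mad},h}$, and you include the $S^1$ contribution, which the paper's seven-term splitting leaves out.
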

\begin{proof}
    Denote $\ub_I := \boldsymbol{\mathcal{I}}^*_h \ub$ and $\etab_h := \ub_I - \ub_h$. By coercivity of $\Bsf_{\mathrm{mad},h}^{\mathrm{asym}}$ (see Lemma~\ref{thm:coersupg}) and the consistency~\eqref{eq:mag:consissd}, we have
    \begin{equation}\label{eq:mad:tmpest}
        \begin{aligned}
            \beta_{\mathrm{mad}}\|\etab_h\|^2_{\mathrm{mad},h} &\leq \Bsf_{\mathrm{mad},h}^{\mathrm{asym}}(\etab_h, \etab_h) =  \Bsf_{\mathrm{mad},h}^{\mathrm{asym}}(\ub_I - \ub, \etab_h).
        \end{aligned}
    \end{equation}
    The right-hand side is written as a summation of seven terms:
    \begin{equation}
        \Bsf_{\mathrm{mad},h}^{\mathrm{asym}}(\ub_I - \ub, \etab_h) := \sum_{i=0}^6 I_i,
    \end{equation}
    where
    \begin{equation}
        \begin{aligned}
            I_0 &:= \underbrace{- \eps \langle \gammax \curl (\ub_I - \ub), \gammat \etab_h\rangle}_{I_{0,1}} + \underbrace{\eps \langle \gammat (\ub_I - \ub), \gammax \curl \etab_h\rangle}_{I_{0,2}}, \\
            I_1 &:= \eps (\curl (\ub_I - \ub), \curl \etab_h), \\
            I_2 &:= (\L_{\betab,h} (\ub_I - \ub), \etab_h) - \sum_{F \in \Fcal^\circ_h \cup \Fcal^-_h} \langle \betab\cdot \nb_F \jump{\ub_I - \ub}, \avg{\etab_h}\rangle_F, \\
            I_3 &:= (\alpha (\ub_I - \ub), \etab_h), \\
            I_4 &:= \sum_{T \in \mathcal{T}_h} \delta_T (\eps \curl\curl (\ub_I - \ub), \L_{\betab,h} \etab_h)_T, \\
            I_5 &:= \sum_{T \in \mathcal{T}_h} \delta_T (\L_{\betab,h} (\ub_I - \ub), \L_{\betab,h} \etab_h)_T, \\
            I_6 &:= \sum_{T \in \mathcal{T}_h} \delta_T (\alpha (\ub_I - \ub), \L_{\betab,h} \etab_h)_T.
        \end{aligned}
    \end{equation}
    The terms $I_2, I_3, I_5, I_6$ are estimated using the approximation property~\eqref{eq:mad:approxistar} in the same way as~\cite[][Theorem~1]{li_2026}. We thus omit the lengthy but elementary derivations to avoid redundancy and only present the resultant estimates:
    \begin{equation}
        |I_2| + |I_3| + |I_5| + |I_6| \leq C\left(\delta^{-1/2} h + h^{1/2} + \delta^{1/2}\right) h^{k}|\ub|_{\Hb^{k+1}} \|\etab_h\|_{\mathrm{mad},h}.
    \end{equation}
    
    The term $I_0$ is new due to usage of $\a^{\mathrm{asym}}_{\curln,h}$. To estimate $I_{0,1}$, we rely on the orthogonality~\eqref{eq:mad:orthistar}, the trace \Poincare inequality~\eqref{eq:tracepoincare}, the trace inequality~\eqref{eq:multtraceineq} and the approximation property~\eqref{eq:mad:approxistar} to deduce
    \begin{equation}
        \begin{aligned}
            |I_{0,1}| &= |\eps \langle \gammax \curl (\ub_I - \ub), \gammat \etab_h - \Pib^\Gamma_h \gammat \etab_h\rangle|\\
            &\leq C \eps \|\gammax \curl (\ub_I - \ub)\|_{\Lb^2(\Gamma)} \|\gammat \etab_h - \Pib^\Gamma_h \gammat \etab_h\|_{\Lb^2(\Gamma)} \\
            &\leq C \eps \|\gammax \curl (\ub_I - \ub)\|_{\Lb^2(\Gamma)} h^{1/2} \|\curl \etab_h\| \\
            &\leq C \eps^{1/2} h^{k+1} |\curl \ub|_{H^{k+1}} \|\etab_h\|_{\mathrm{mad},h}.
        \end{aligned}
    \end{equation}
    The second term $I_{0,2} = 0$ since $\gammat \ub = \zerob$ and $\boldsymbol{\mathcal{I}}^*_h$ preserves homogeneous tangential BCs. Using the estimate~\eqref{eq:mad:approxistar}, we have 
    \begin{equation}
        \begin{aligned}
            |I_1| &\leq  C\eps^{1/2} h^{k+1} |\curl \ub|_{\Hb^{k+1}} \|\etab_h\|_{\mathrm{mad},h}, \\
            |I_4| &\leq C \delta^{1/2}\eps |\curl (\ub_I - \ub)|_{\Hb^1} \|\etab_h\|_{\mathrm{mad},h} \leq C \delta^{1/2}\eps h^{k} |\curl \ub|_{\Hb^{k+1}} \|\etab_h\|_{\mathrm{mad},h}.
        \end{aligned}
    \end{equation}
    Combining all the estimates and the triangle inequality, we can conclude.
\end{proof}

To balance the terms in~\eqref{eq:mad:err}, we adopt the standard choice of the stabilization parameter (see, e.g.,~\cite[][Eq.~3.38]{roos_2008}):
\begin{equation}\label{eq:mad:deltachoice}
    \delta = \begin{cases}
        \delta_0 h & \text{if } \mathrm{Pe} > 1 \text{ \emph{(advection-dominated regime)}},\\
        \delta_1 h^2/\eps & \text{if } \mathrm{Pe} \leq 1 \text{ \emph{(diffusion-dominated regime)}},
    \end{cases}
\end{equation}
    with $\delta_0, \delta_1 > 0$ being appropriate constants. Here, $\mathrm{Pe} := \|\betab\|_{L^\infty} h/(2\eps)$ is the P\'eclet number. With such choice, the error estimate~\eqref{eq:mad:err} becomes
\begin{corollary}[a priori error estimate]\label{thm:mad:errfinal}
    Under the same assumptions as in Theorem~\ref{thm:errsupg} and the choice of $\delta$ in~\eqref{eq:mad:deltachoice}, there exists a constant $C > 0$ independent of $h$ and $\eps$ such that
    \begin{equation}\label{eq:mad:errest}
        \|\ub - \ub_h\|_{\mathrm{mad},h} \leq 
             C h^{k} (h^{1/2} + \eps^{1/2}) |\ub|_{\Hb^{k+1}} + C \eps^{1/2} h^{k+1} |\curl \ub|_{\Hb^{k+1}}.
    \end{equation}
\end{corollary}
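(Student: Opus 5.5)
The plan is to plug the choice \eqref{eq:mad:deltachoice} into the right-hand side of \eqref{eq:mad:err} from Theorem~\ref{thm:errsupg}. I then simplify each regime separately, using the Péclet number to trade powers of $\eps$ against powers of $h$. First I would check that the choice of $\delta$ is admissible, i.e.\ that it satisfies the hypothesis of Lemma~\ref{thm:coersupg}.
\begin{itemize}
\item In the advection-dominated regime we have $\eps < \|\betab\|_{L^\infty}h/2$, so $h^2/(2C_{\mathrm{inv}}^2\eps) \geq C h$. Hence $\delta_0 h$ is admissible for $\delta_0$ small enough, and also for $h$ small relative to $\alpha_0/\|\alpha\|^2_{L^\infty}$.
\item In the diffusion-dominated regime the choice $\delta_1 h^2/\eps$ with $\delta_1 \le 1/(2C_{\mathrm{inv}}^2)$ is admissible. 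Moreover $\eps \geq \|\betab\|_{L^\infty} h/2$ gives $\delta_1 h^2/\eps \le Ch$, which is bounded by the reaction constraint for small $h$.
\end{itemize}
Since quasi-uniformity and $\delta_T=\delta$ are assumed, these checks are routine.

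\emph{Case $\mathrm{Pe}>1$}, with $\delta=\delta_0 h$. The first bracket in \eqref{eq:mad:err} becomes
\[
\delta^{-1/2}h + h^{1/2} + \delta^{1/2} = (\delta_0^{-1/2} + 1 + \delta_0^{1/2})\,h^{1/2}.
\]
This yields the term $C h^{k+1/2}|\ub|_{\Hb^{k+1}}$. For the second bracket, $\eps^{1/2}h$ already has the target form. The remaining piece is
\[
\eps\delta^{1/2} h^k = \delta_0^{1/2}\,\eps\, h^{k+1/2}.
\]
Using $\eps \le Ch$ gives $\eps = \eps^{1/2}\eps^{1/2} \le C\eps^{1/2}h^{1/2}$, so this piece is bounded by $C\eps^{1/2}h^{k+1}|\curl\ub|_{\Hb^{k+1}}$.

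\emph{Case $\mathrm{Pe}\le 1$}, with $\delta=\delta_1h^2/\eps$. The three pieces of the first bracket are:
\begin{itemize}
\item $\delta^{-1/2}h = \delta_1^{-1/2}\eps^{1/2}$, which gives the term $\eps^{1/2}h^k|\ub|_{\Hb^{k+1}}$;
\item $h^{1/2}$, which is kept as is;
\item $\delta^{1/2} = \delta_1^{1/2}h\,\eps^{-1/2}$. Since $\eps \ge ch$, we have $h\eps^{-1/2} \le Ch^{1/2}$, so this is absorbed into the $h^{1/2}$ term (or alternatively into $\eps^{1/2}$).
\end{itemize}
For the second bracket, $\eps\delta^{1/2} = \delta_1^{1/2}\eps^{1/2}h$, which matches the $\eps^{1/2}h^{k+1}$ term.

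Combining both cases, with constants depending on $\delta_0$, $\delta_1$ and $\|\betab\|_{L^\infty}$ but not on $h$ or $\eps$, gives \eqref{eq:mad:errest}.

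The main point of care is not an estimate but uniformity. The constant $C$ in Theorem~\ref{thm:errsupg} must be independent of $\eps$. In each regime, the leftover factors of $\eps$ must only be traded for powers of $h$ in the direction allowed by the Péclet bound: $\eps \lesssim h$ in the advection-dominated case and $h \lesssim \eps$ in the diffusion-dominated case. If Theorem~\ref{thm:errsupg}'s constant secretly depended on $\eps$ through $\delta$, I would revisit how $I_2,I_5$ were estimated. Taking that theorem as stated, the corollary is pure bookkeeping.
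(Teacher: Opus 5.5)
Your proposal is correct and matches the paper, which obtains the corollary simply by substituting the two choices of $\delta$ from \eqref{eq:mad:deltachoice} into \eqref{eq:mad:err}. As you do, one then uses $\eps \lesssim h$ in the advection-dominated case and $h \lesssim \eps$ in the diffusion-dominated case to absorb the leftover factors of $\eps$. Your admissibility check of $\delta$ is harmless but not needed, since the assumption of Lemma~\ref{thm:coersupg} is already part of the hypotheses.
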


\section{Numerical experiments}\label{sec:num}
In this section, we present numerical experiments to verify the theoretical results and demonstrate the performance of the proposed schemes. Specifically, we first conduct a convergence study of~\eqref{eq:ccasymnonhomo2} for the $\curln$-elliptic problem~\eqref{eq:cc}. In particular, the different treatments of the boundary data discussed in Section~\ref{sec:cc:nonhomo} are compared. Besides, two failing cases are demonstrated: 1) using the \Nedelec elements of the second kind (see Remark~\ref{rmk:type2fail}); 2) using the \Nedelec elements of the first kind with a mesh violating the isolated patch condition (see Section~\ref{sec:patch}). Furthermore, we show the influence of the penalty parameter $C_p$ in the symmetric Nitsche's method~\eqref{eq:blsym} to stress the advantage of being penalty-free. In the second part, we conduct a convergence study of~\eqref{eq:supgscheme} for smooth solutions of the magnetic advection-diffusion problem~\eqref{eq:mad}. Then, different BC enforcement strategies (strong or weak) with/without stabilization are compared for solutions with boundary layers. 

All computations are implemented in \href{https://www.ngsolve.org}{\texttt{NGSolve}}.
\subsection{The curl-elliptic problem}
\subsubsection{Convergence study on smooth solution}\label{sec:num:cc:conv}
Let $\Omega = (0,1)^3$ and
\begin{equation}
    \ub^\ast(x,y,z) = [\sin(z + 1),\;\sin(x) + \cos(z),\;\sin(y)]^{\top}
\end{equation}
be the exact solution of the following equation:
\begin{equation}\label{eq:cc:numexp}
    \begin{aligned}
        \curl\curl \ub &= \fb &&\quad \text{in } \Omega,\\
        \div \ub &= 0 &&\quad \text{in } \Omega,\\
        \gammat \ub &= \gammat \ub^\ast &&\quad \text{on } \Gamma,
    \end{aligned}
\end{equation}
where $\fb = \curl\curl \ub^\ast$. We solve the problem using the scheme~\eqref{eq:ccasymnonhomo2} (labeled as \textbf{Interpolated BC}) and~\eqref{eq:ccasymnonhomo} (labeled as \textbf{Direct BC}). The convergence behaviors are demonstrated in Figure~\ref{fig:cc:convergence}. As discussed in Section~\ref{sec:cc:nonhomo}, the scheme~\eqref{eq:ccasymnonhomo2} with the interpolated BC data enjoys optimal convergence in $\Hcurl$ in agreement with Corollary~\ref{thm:cc:err}, whereas suboptimality occurs for the one~\eqref{eq:ccasymnonhomo} that plugs in the BC data directly.
\begin{figure}
    \centering
    \includegraphics[width=0.45\textwidth, clip, trim = 0 0 0 20]{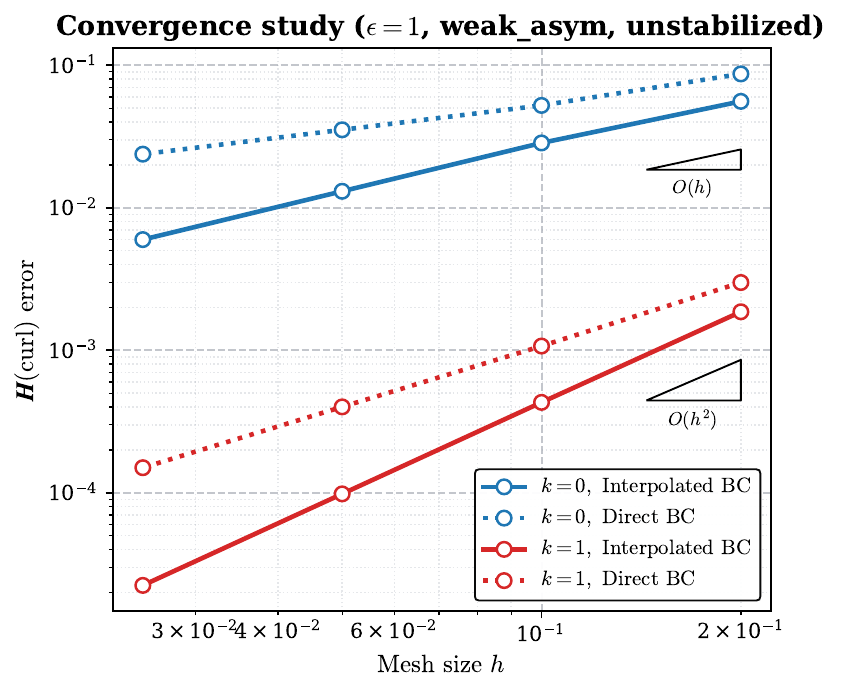}
    \caption{Convergence of the schemes for the curl-elliptic problem~\eqref{eq:cc:numexp} using different treatments of the boundary data: \textbf{Interpolated BC} for the scheme~\eqref{eq:ccasymnonhomo2} and \textbf{Direct BC} for the scheme~\eqref{eq:ccasymnonhomo}.}
    \label{fig:cc:convergence}
\end{figure}
\subsubsection{Failure of using the \Nedelec elements of the second kind} 
We solve the same problem~\eqref{eq:cc:numexp} but on a sphere $\Omega = \{\xb \in \R^3: \|\xb\| < 1\}$ to ensure that the underlying mesh meets the isolated patch condition strictly. In particular, we use the \Nedelec elements of the second kind (see Remark~\ref{rmk:type2fail}). Instability occurs as is shown in Figure~\ref{fig:cc:type2fail} (left). In contrast, the solution using the \Nedelec elements of the first kind is stable in Figure~\ref{fig:cc:type2fail} (right).
\begin{figure}
    \centering
    \includegraphics[width=0.3\textwidth]{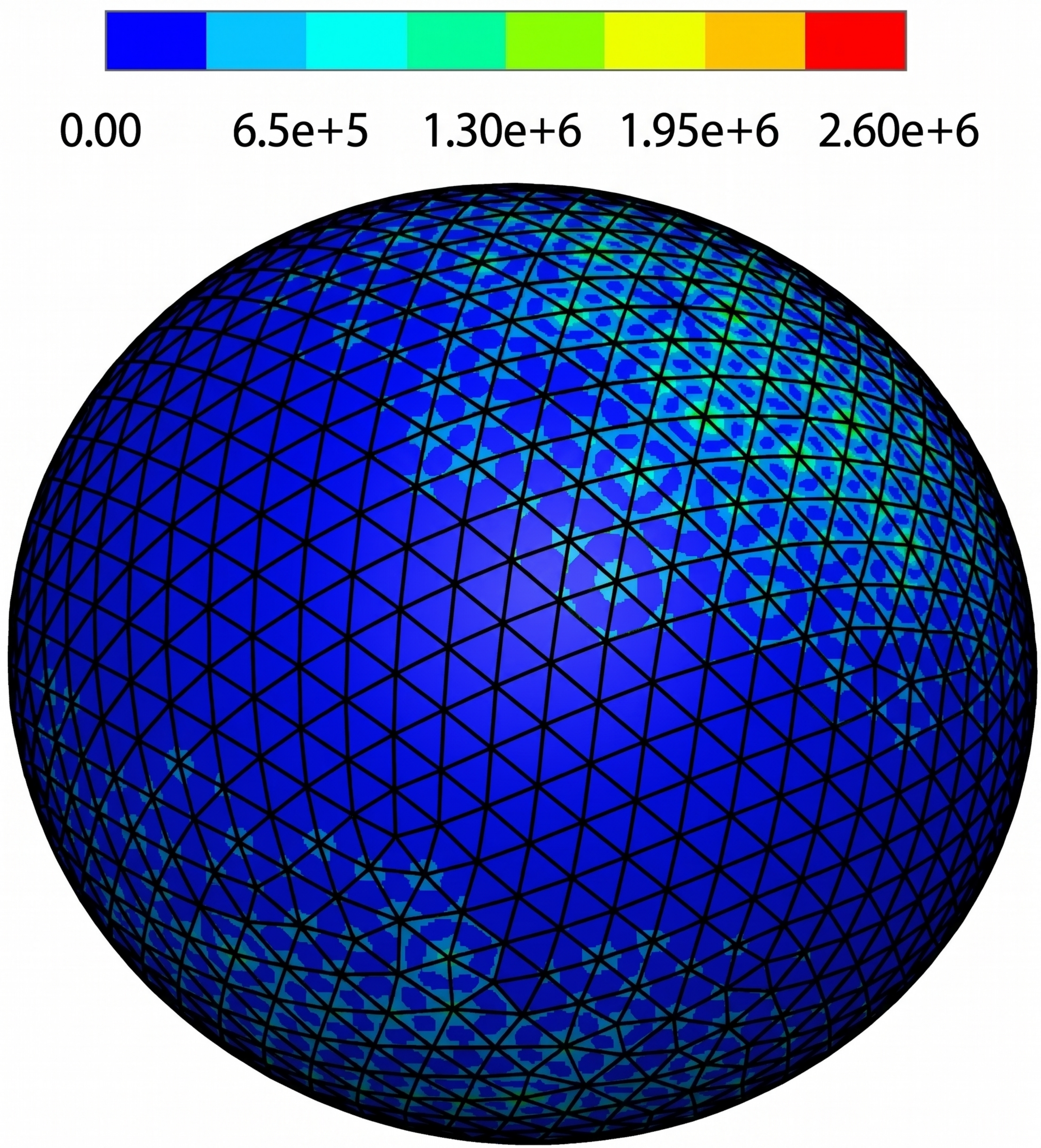}
    \hspace{2cm}
    \includegraphics[width=0.3\textwidth]{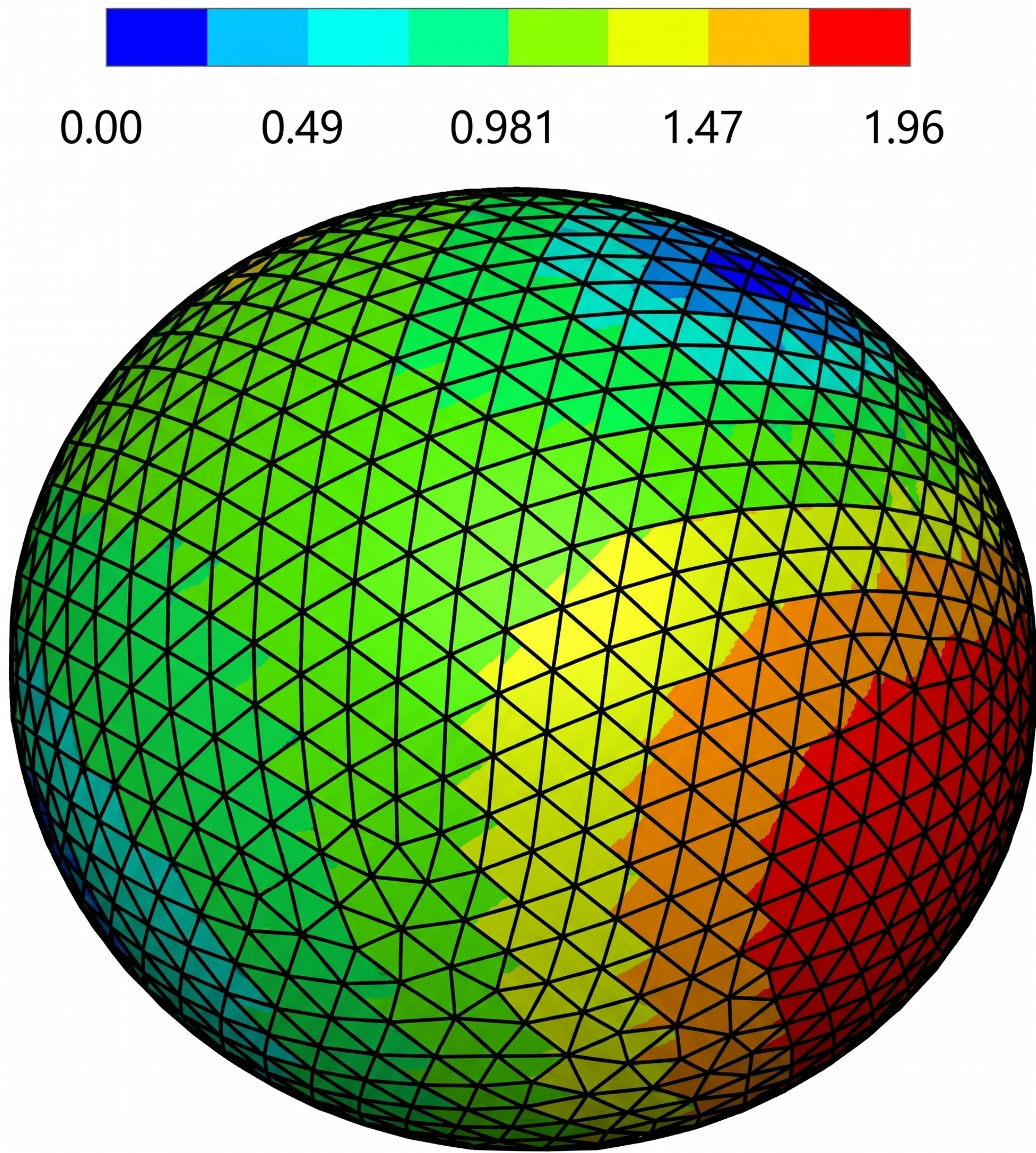}
    \caption{Failure of using the \Nedelec elements \emph{of the second kind} (left) and the stable solution using the \Nedelec elements \emph{of the first kind} (right). The mesh meets the isolated patch condition strictly. The color represents the magnitude of the solutions of~\eqref{eq:cc:numexp} with $\Omega = \{\xb \in \R^3: \|\xb\| < 1\}$. A mesh with $h = 0.1$ and an element order with $k=1$ (see~\eqref{def:spaces}) are used.}
    \label{fig:cc:type2fail}
\end{figure}

\subsubsection{Failure of using mesh violating the isolated patch condition}\label{sec:num:cc:meshfail}
We solve the same problem~\eqref{eq:cc:numexp} on a unit cube $\Omega = (0,1)^3$ using the \Nedelec elements \emph{of the first kind}. We test~\eqref{eq:ccasymnonhomo2} on a mesh containing tetrahedra that \emph{have three faces on $\Gamma$}. Instability is observed in Figure~\ref{fig:cc:ipcviolfail} (left). In contrast, the solution on a mesh that avoids such tetrahedra is stable in Figure~\ref{fig:cc:ipcviolfail} (right). 

We would like to point out that the mesh in Figure~\ref{fig:cc:ipcviolfail} (right) is generated via refining an initial mesh once (using \texttt{.Refine()} in \texttt{NGSolve} based on the bisection algorithm of~\cite{arnold_2000}). From our experience, instability induced by a bad mesh topology is effectively avoided by this approach, although no theoretical guarantee is promised. This indicates that a weaker condition than the isolated patch condition may be sufficient for stability.
\begin{figure}
    \centering
    \includegraphics[width=0.3\textwidth]{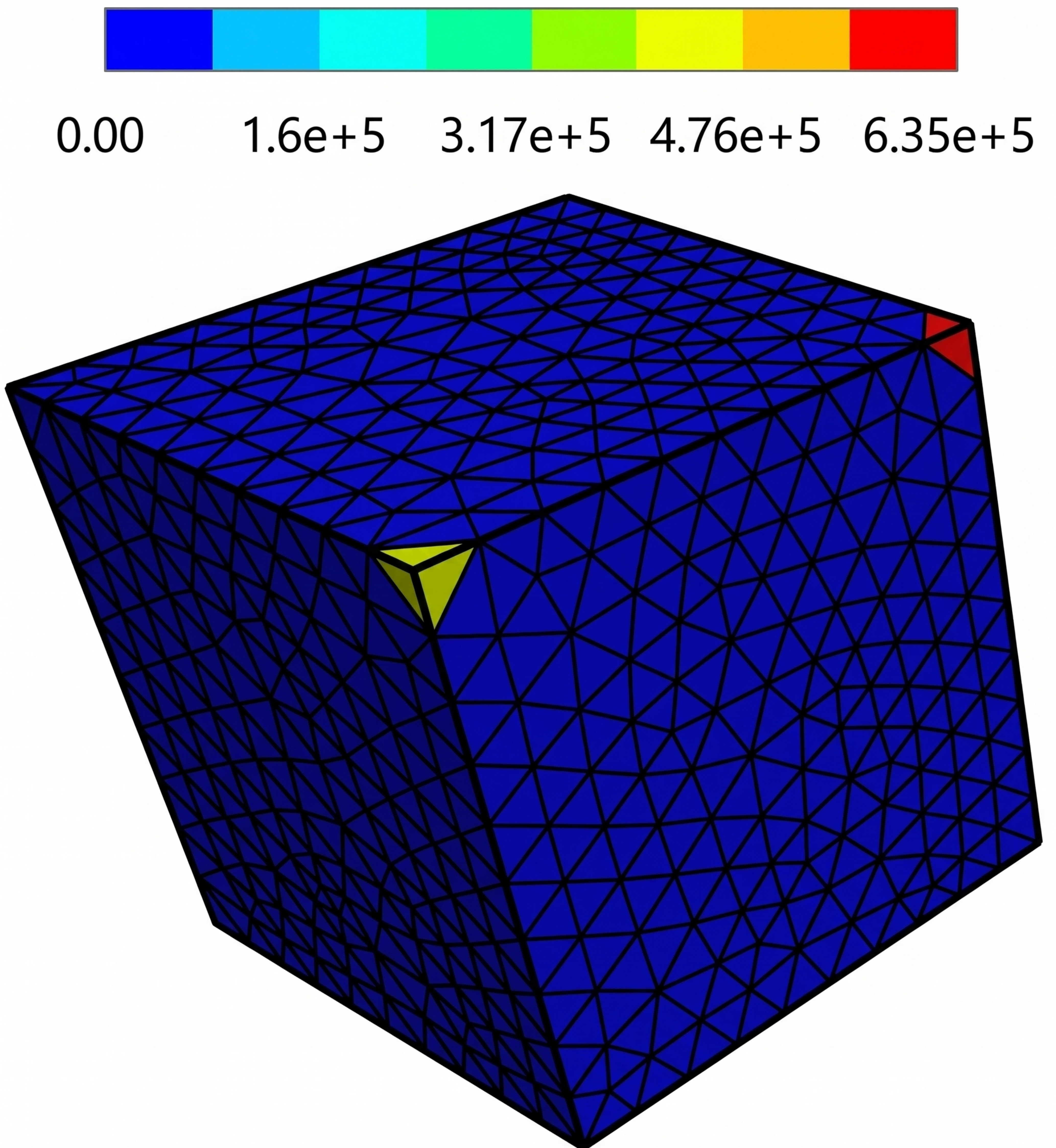}
    \hspace{2cm}
    \includegraphics[width=0.3\textwidth]{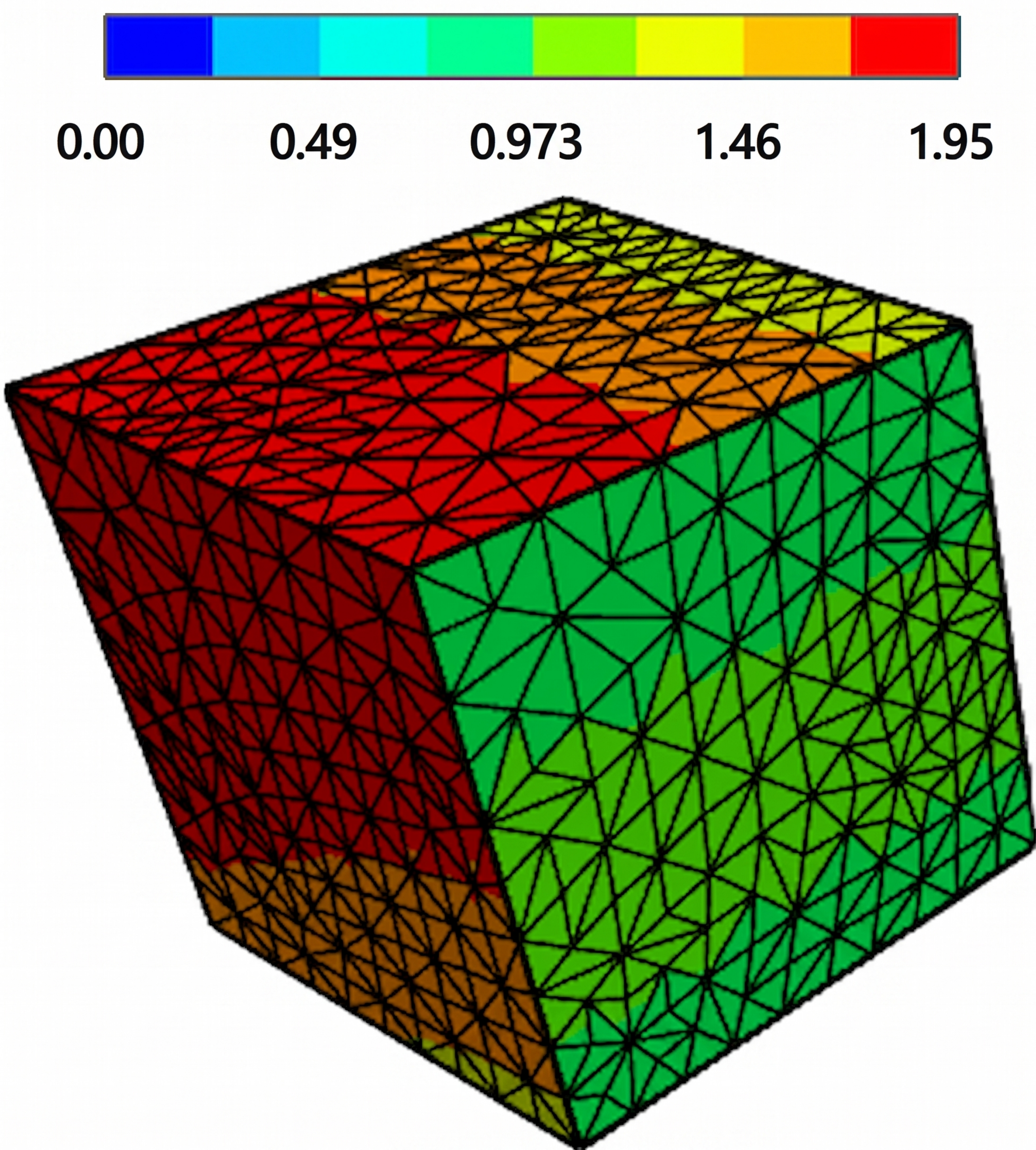}
    \caption{Failure of using a mesh \emph{that contains tetrahedra with three faces on $\Gamma$} (left) and the stable solution using a mesh \emph{whose tetrahedra have at most two faces on $\Gamma$} (right). The color represents the magnitude of the solutions of~\eqref{eq:cc:numexp} with $\Omega = (0,1)^3$. A mesh with $h = 0.1$ and an element order with $k=1$ (see~\eqref{def:spaces}) are used.}
    \label{fig:cc:ipcviolfail}
\end{figure}

\subsubsection{Influence of the penalty parameter}\label{sec:cc:penalty}
Recall the symmetric Nitsche's bilinear form $\a^{\mathrm{sym}}_{\curln,h}$ defined in~\eqref{eq:blsym}.
To solve~\eqref{eq:cc}, the symmetric Nitsche's method seeks $(\ub_h, p_h) \in \Sigmab_h \times \Pb_{h,0}$ such that
\begin{equation}\label{eq:cc:sym}
    \Bsf_{\curln}^\text{sym}((\ub_h, p_h), (\vb_h, q_h)) = (\fb, \vb_h) \quad \forall\,(\vb_h, q_h) \in \Sigmab_h \times \Pb_{h,0},
\end{equation}
where
\begin{equation}
    \begin{aligned}
        \B^{\mathrm{sym}}_{\curln}((\ub_h, p_h), (\vb_h, q_h)) := \a^{\mathrm{sym}}_{\curln, h}(\ub_h, \vb_h) + (\grad p_h, \vb_h) + (\grad q_h, \ub_h).
    \end{aligned}
\end{equation}
Compared with the asymmetric Nitsche's method~\eqref{eq:ccasym}, one needs to choose a penalty parameter $C_p$ in~\eqref{eq:cc:sym}. On one hand, a sufficiently large $C_p$ is required to guarantee stability by theory~\cite{casagrande_2016,boffi_2023,wouter_2026}. On the other hand, a large $C_p$ may lead to a large condition number of the resulting linear system. 

In Figure~\ref{fig:cc:penalty}, we plot errors in $\Hcurl$-norm of the scheme~\eqref{eq:cc:sym} using different values of $C_p$. The errors of the asymmetric Nitsche's method~\eqref{eq:ccasymnonhomo2} and the standard scheme~\eqref{eq:cc:std} are also plotted for comparison. Deterioration of the accuracy of the symmetric method is observed when $C_p$ is too small, which complies with the analysis. In contrast, the penalty-free asymmetric method is free of this issue and is as accurate as the symmetric one with an optimally chosen $C_p$. Similar phenomena are also observed in~\cite[][Figure~3]{wouter_2026} for the Stokes problem.
\begin{figure}
    \centering
    \includegraphics[width=0.45\textwidth, clip, trim = 0 0 0 20]{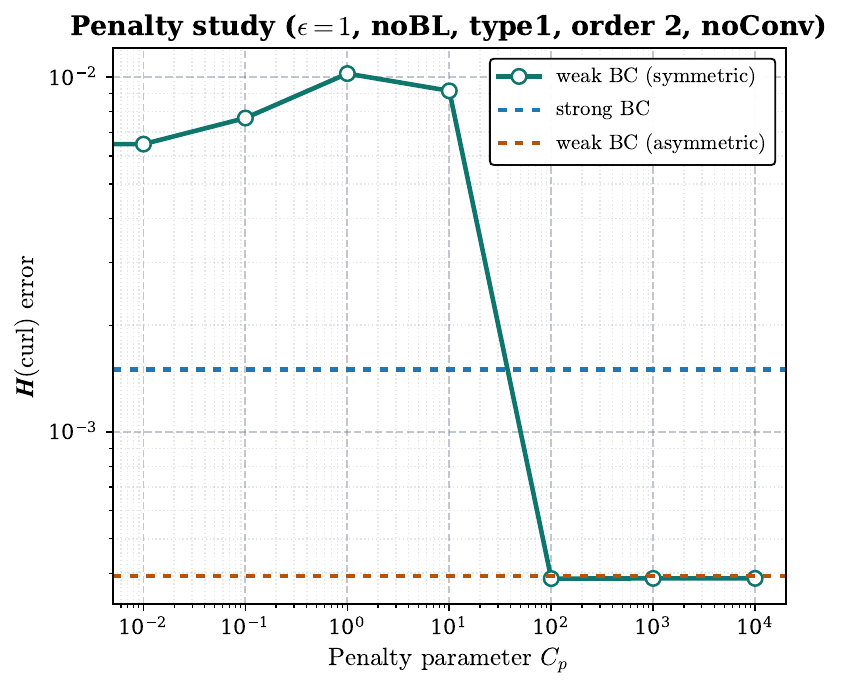}
    \caption{Influence of the penalty parameter $C_p$ in the symmetric Nitsche's method~\eqref{eq:cc:sym} for solving~\eqref{eq:cc:numexp} (labeled as \textbf{weak BC (symmetric)}). The errors of the asymmetric Nitsche's method~\eqref{eq:ccasymnonhomo2} (labeled as \textbf{weak BC (asymmetric)}) and the standard scheme~\eqref{eq:cc:std} (labeled as \textbf{strong BC}) are also plotted for comparison.  A mesh with $h = 0.1$ and an element order with $k=1$ (see~\eqref{def:spaces}) are used.}
    \label{fig:cc:penalty}
\end{figure}

\subsection{The magnetic advection-diffusion problem}\label{sec:num:mad}
\subsubsection{Convergence study on smooth solution}\label{sec:num:mad:conv}
Let $\Omega = (0,1)^3$ and
\begin{equation}
    \ub^\ast(x,y,z) = [y \exp(xz),\;- x^2y,\;\sin(xyz)]^{\top}
\end{equation}
be the exact solution of the following equation:
\begin{equation}\label{eq:mad:numexp}
    \begin{aligned}
        \eps \curl\curl \ub + \L_\betab \ub + \alpha \ub &= \fb &&\quad \text{in } \Omega,\\
        \gammat \ub &= \gammat \ub^\ast &&\quad \text{on } \Gamma\setminus\Gamma^-,\\
        \ub &= \ub^\ast &&\quad \text{on } \Gamma^-,
    \end{aligned}
\end{equation}
where $\eps = 10^{-6}$, $\betab(x,y,z)=[1 - z/2,\;2 + x,\;3 - y]^\top$, $\alpha=8$, and $\fb = \eps \curl\curl \ub^\ast + \L_\betab \ub^\ast + \alpha \ub^\ast$. We solve~\eqref{eq:mad:numexp} using the scheme~\eqref{eq:mad:nonhomo}. The convergence behaviors for $k=0$ and $k=1$ in $\Lb^2$-error are demonstrated in Figure~\ref{fig:mad:convergence}. According to the theoretical estimate~\eqref{eq:mad:errest}, the scheme is expected to converge with rate $O(h^{k+1/2})$ in $\Lb^2$-error for the advection-dominated regime ($\eps \ll h$). The observed convergence rate $O(h^{k+1})$ is approximately a half-order higher than predicted. This is a common phenomenon for advection-diffusion-type problems~\cite{burman_2006b,heumann_2013,li_2026} and a sharp convergence rate $O(h^{k+1/2})$ is typically observed only on special meshes~\cite{zhou_1997}. 

\begin{figure}
    \centering
    \includegraphics[width=0.45\textwidth, clip, trim = 0 0 0 20]{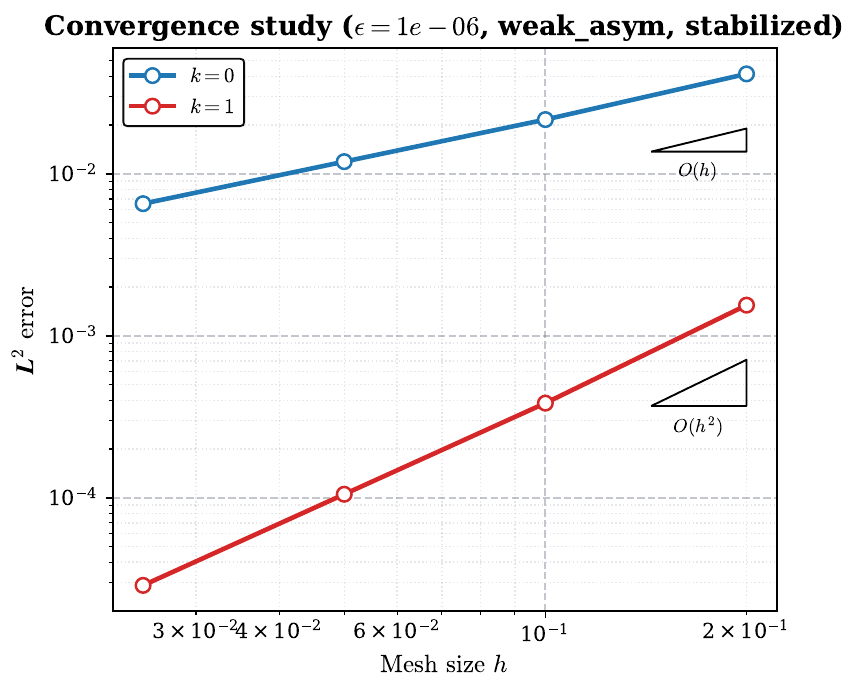}
    \caption{Convergence of the scheme~\eqref{eq:mad:nonhomo} for the magnetic advection-diffusion problem~\eqref{eq:mad:numexp}.}
    \label{fig:mad:convergence}
\end{figure}

\subsubsection{Solution with boundary layers}
Let $\Omega = (0,1)^3$. We consider the following equation:
\begin{equation}\label{eq:mad:numexpbl}
    \begin{aligned}
        \eps \curl\curl \ub + \L_\betab \ub + \alpha \ub &= \fb &&\quad \text{in } \Omega,\\
        \gammat \ub &= \zerob &&\quad \text{on } \Gamma\setminus \Gamma^-,\\
        \ub &= \ub^\ast &&\quad \text{on } \Gamma^-,
    \end{aligned}
\end{equation}
where $\eps, \alpha, \betab, \fb$ are the same as in~\eqref{eq:mad:numexp}. Due to the discrepancy $\gammat\ub \neq \gammat\ub^\ast$ on $\Gamma\setminus\Gamma^-$, the solution of~\eqref{eq:mad:numexpbl} is expected to display abrupt changes near $\Gamma\setminus \Gamma^-$, that is, boundary layers. In Figure~\ref{fig:mad:boundarylayer}, we compare the discrete solutions of~\eqref{eq:mad:numexpbl} using different BC enforcement strategies (\textbf{strong BC}~\eqref{eq:supgschemestrongbc} and \textbf{weak BC}~\eqref{eq:supgscheme}) \textbf{with} and \textbf{without} the stabilization terms $S^1$ and $S^2$ in~\eqref{eq:mag:stabterm}. Imposing the tangential BCs strongly without stabilization (Figure~\ref{fig:mad:strong-nostab}) leads to severe spurious oscillations. Adding the stabilization terms (Figure~\ref{fig:mad:strong-stab}) effectively suppresses the oscillations but is still subject to mild oscillations near the outflow boundary $\Gamma\setminus \Gamma^-$. In contrast, the solution with weakly enforced tangential BCs is free of spurious oscillations even without stabilization (Figure~\ref{fig:mad:weak-nostab}). Such a ``stabilization'' effect of weak imposition of BCs is well-known for scalar advection-diffusion problems~\cite{burman_2006b,schieweck_2008,franz_2011}. However, its error analysis predicts the reduced convergence rate $O(h^k)$ due to absence of stabilization. Therefore, weak imposition of tangential BCs combined with stabilization (Figure~\ref{fig:mad:weak-stab}) produces the most accurate results in the sense that it attains a convergence rate $O(h^{k+1/2})$ in the advection-dominated regime (see Corollary~\ref{thm:mad:errfinal}) and is free of spurious oscillations near the outflow boundary $\Gamma\setminus \Gamma^-$.
\begin{figure}
    \centering
    % a 2by2 figure
    \begin{subfigure}{0.45\textwidth}
        \includegraphics[width=\textwidth, clip, trim = 0 0 0 20]{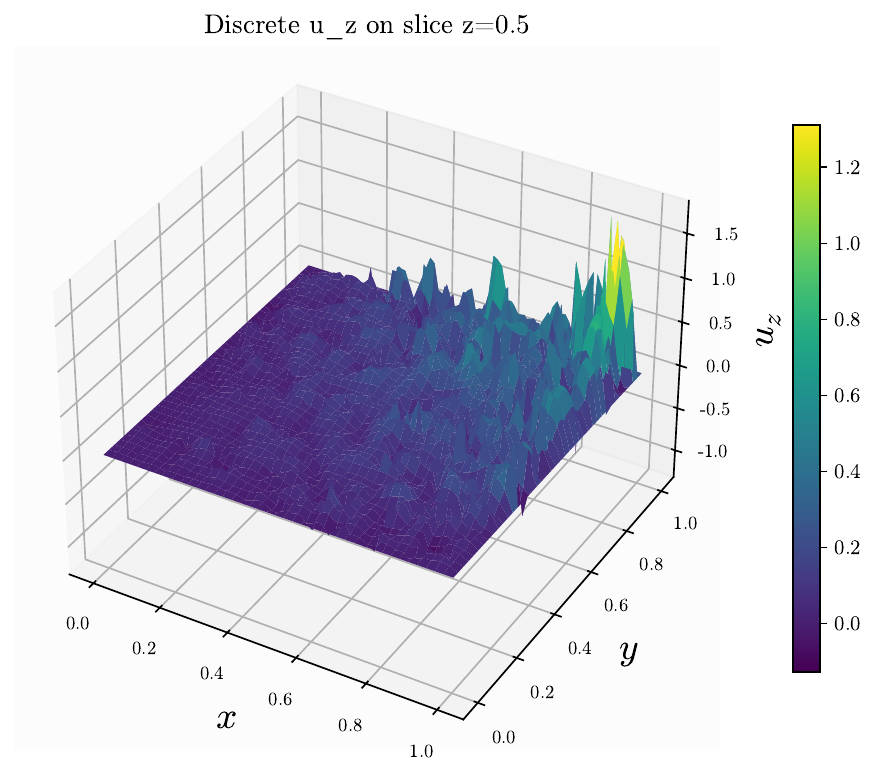}
        \caption{Strong BC with no stabilization.}
        \label{fig:mad:strong-nostab}
    \end{subfigure}
    \begin{subfigure}{0.45\textwidth}
        \includegraphics[width=\textwidth, clip, trim = 0 0 0 20]{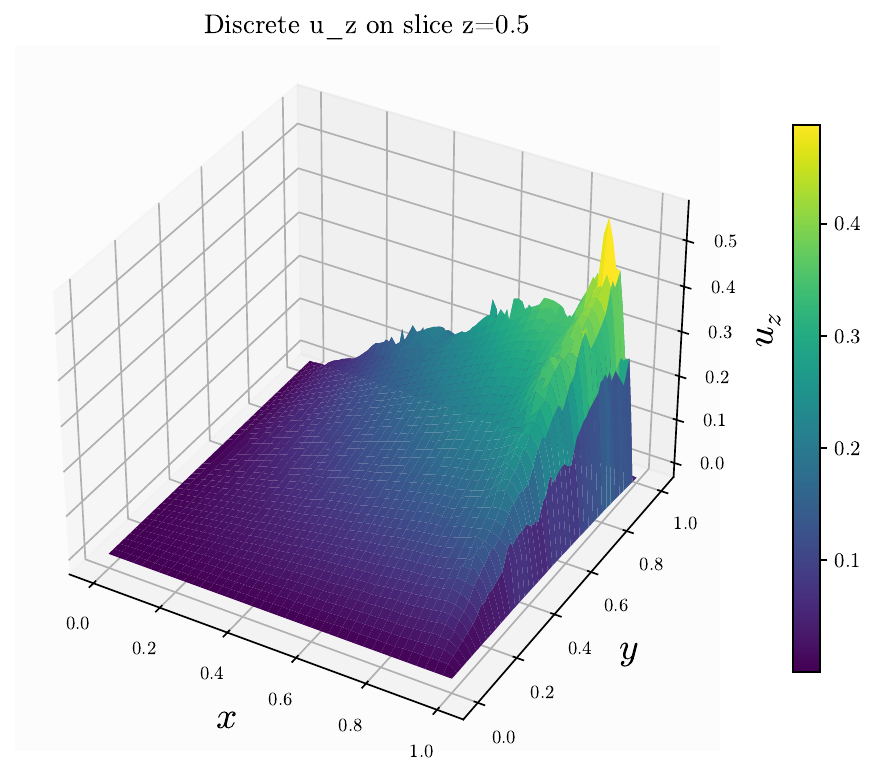}
        \caption{Strong BC with stabilization.}
        \label{fig:mad:strong-stab}
    \end{subfigure}
    \begin{subfigure}{0.45\textwidth}
        \includegraphics[width=\textwidth, clip, trim = 0 0 0 20]{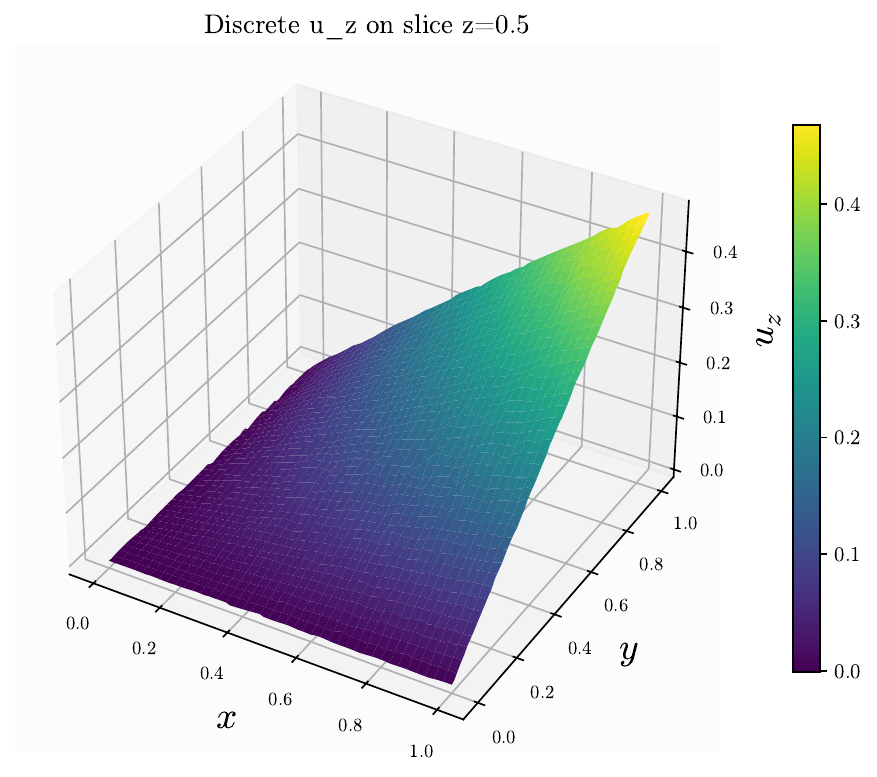}
        \caption{Weak BC with no stabilization.}
        \label{fig:mad:weak-nostab}
    \end{subfigure}
    \begin{subfigure}{0.45\textwidth}
        \includegraphics[width=\textwidth, clip, trim = 0 0 0 20]{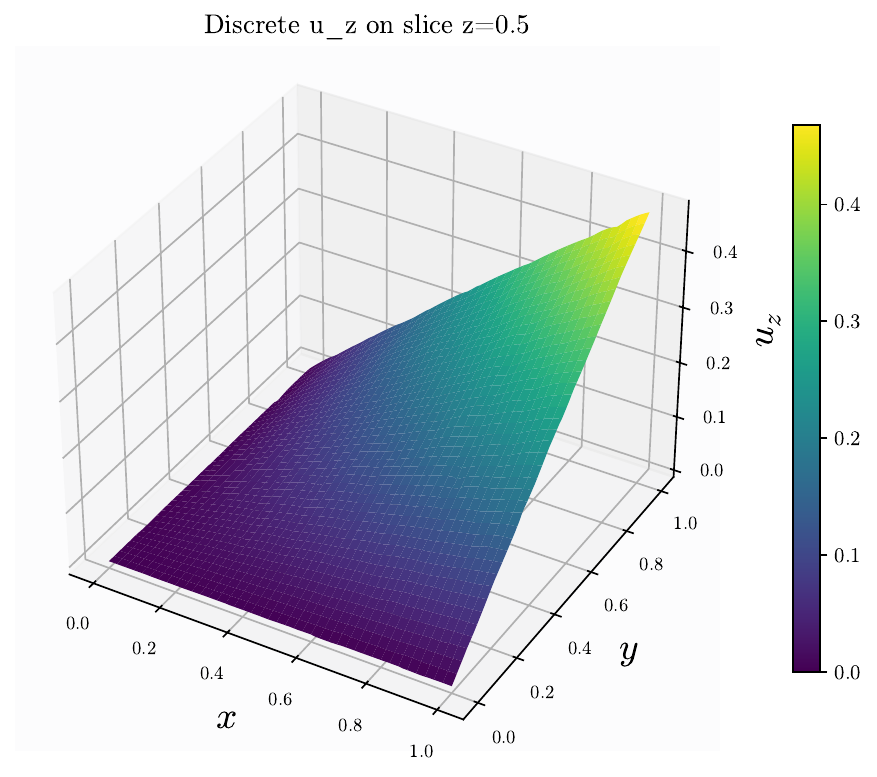}
        \caption{Weak BC with stabilization.}
        \label{fig:mad:weak-stab}
    \end{subfigure}
    \caption{Comparison of the discrete solutions of~\eqref{eq:mad:numexpbl} with tangential BCs strongly~\eqref{eq:supgschemestrongbc} or weakly~\eqref{eq:supgscheme} enforced and with or without the stabilization terms $S^1$ and $S^2$ in~\eqref{eq:mag:stabterm}. The color and height represent the $z$-component of the solutions on a two-dimensional slice at $z = 0.5$. A mesh with $h = 0.1$ and an element order with $k=1$ (see~\eqref{def:spaces}) are used.}
    \label{fig:mad:boundarylayer}
\end{figure}

\clearpage
\section{Concluding remarks}\label{sec:conclusion}
We have shown the inf-sup stability of the penalty-free asymmetric Nitsche's bilinear form $\a^{\mathrm{asym}}_{\curln,h}$ in~\eqref{eq:blasym} (see Lemma~\ref{thm:infsupasym}). This provides a valid approach for discretizing the $\curln$-$\curln$ operator with the tangential BCs imposed weakly. Its symmetric variant based on $\a^{\mathrm{sym}}_{\curln,h}$ in~\eqref{eq:blsym} requires choosing a penalty parameter $C_p$. A poor choice of $C_p$ can deteriorate the accuracy (see Section~\ref{sec:cc:penalty}). In contrast, the penalty-free asymmetric Nitsche's method avoids this parameter selection. Its accuracy for the two model problems~\eqref{eq:cc} and~\eqref{eq:mad} is confirmed by both the analysis (Section~\ref{sec:cc:err} and~\ref{sec:mad:err}) and the experiments (Section~\ref{sec:num:cc:conv} and~\ref{sec:num:mad:conv}). 

Special care should be taken with the restrictions on the edge element spaces and mesh topology. The analysis relies on \Nedelec elements of \emph{the first kind} (see definition in~\eqref{def:spaces}) and on meshes satisfying the \emph{isolated patch condition} (see Section~\ref{sec:patch}). Instability of using \Nedelec elements of the second kind is confirmed numerically (see Figure~\ref{fig:cc:type2fail}). Failure on meshes violating the isolated patch condition is also observed (see Figure~\ref{fig:cc:ipcviolfail}), while this failure can be effectively avoided by a reasonable mesh generation strategy (see Section~\ref{sec:num:cc:meshfail}).

Overall, the penalty-free asymmetric Nitsche's method is a viable alternative for discretizing $\curln$-$\curln$-type operators. Following its $H^1$-counterpart for Laplacian-type operators, its application extends to many other contexts, such as interface problems and unfitted mesh methods, with potential advantages over its symmetric variant. 

Fianlly, we remark that the $\Hdiv$-counterpart of the penalty-free asymmetric Nitsche's method for $\gradn$-$\divn$-type operators can be analyzed in a rather similar manner. 
 
\bibliography{ref.bib}
\end{document}